\newcommand{\TT}{\texttt{T}}
\newcommand{\dx}{\,{\rm d}x}
\newcommand{\dd}{\,{\rm d}}
\newcommand{\bs}{\boldsymbol}
\newcommand{\tr}{\operatorname{tr}}
\newcommand{\dist}{\operatorname{dist}}
\newcommand{\sym}{\operatorname{sym}}
\newcommand{\sign}{\operatorname{sign}}
\newcommand{\Oplus}{\ensuremath{\vcenter{\hbox{\scalebox{1.5}{$\oplus$}}}}}
\newcommand{\ttt}[1]{\textnormal{\texttt{#1}}}
\newcommand{\step}[1]{\noindent\raisebox{1.5pt}[10pt][0pt]{\tiny\framebox{$#1$}}\xspace}
\newtheorem{theorem}{Theorem}[section]
\newtheorem{lemma}[theorem]{Lemma}
\newtheorem{corollary}[theorem]{Corollary}
\newtheorem{remark}[theorem]{Remark}
\begin{document}
\title[Basis Construction of Smooth Finite Elements]{Implementation and Basis Construction for Smooth Finite Element Spaces}
\author{Chunyu Chen}%
 \address{School of Mathematics and Computational Science, Xiangtan University,
    National Center for Applied Mathematics in Hunan, 
    Hunan Key Laboratory for Computation and Simulation in Science and Engineering, 
    Xiangtan 411105, China}%
 \email{cbtxs@smail.xtu.edu.cn}%
\author{Long Chen}%
 \address{Department of Mathematics, University of California at Irvine, Irvine, CA 92697, USA}%
 \email{chenlong@math.uci.edu}%
\author{Tingyi Gao}%
 \address{School of Mathematics and Computational Science, Xiangtan University,
    National Center for Applied Mathematics in Hunan, 
    Hunan Key Laboratory for Computation and Simulation in Science and Engineering, 
    Xiangtan 411105, China}%
 \email{202431510119@smail.xtu.edu.cn}%
 \author{Xuehai Huang}%
 \address{School of Mathematics, Shanghai University of Finance and Economics, Shanghai 200433, China}%
 \email{huang.xuehai@sufe.edu.cn}%
\author{Huayi Wei}%
 \address{School of Mathematics and Computational Science, Xiangtan University,
    National Center for Applied Mathematics in Hunan, 
    Hunan Key Laboratory for Computation and Simulation in Science and Engineering, 
    Xiangtan 411105, China}%
\email{weihuayi@xtu.edu.cn}%

\thanks{The first, third and fifth authors were supported by the
    National Key R\&D Program of China (2024YFA1012600) and the National Natural Science Foundation of China (NSFC) (Grant No. 12371410, 12261131501) .The second author was supported by NSF (Grant No. DMS-2309777, DMS-2309785). The fourth author was supported by NSFC (Grant No. 12171300).}

\makeatletter
\@namedef{subjclassname@2020}{\textup{2020} Mathematics Subject Classification}
\makeatother
\subjclass[2020]{
%65F10;   %% Iterative numerical methods for linear systems
% 58J10;   %%  Differential complexes [See also 35Nxx]; elliptic complexes
% 65N15;   %%  Error bounds for boundary value problems involving PDEs
% 65N22;   %%  Numerical solution of discretized equations for boundary value problems involving PDEs;
65N30;   %%  Finite element, Rayleigh-Ritz and Galerkin methods for boundary value problems involving PDEs;
65N12;   %%  Stability and convergence of numerical methods for boundary value problems involving PDEs;
%65N55;   %%  Multigrid methods; domain decomposition for boundary value problems involving PDEs;
% 15A69;   %%  Multilinear algebra, tensor calculus
% 15A72;   %%  Vector and tensor algebra, theory of invariants [See also 13A50, 14L24]
31B30;   %%  Biharmonic and polyharmonic equations and functions in higher dimensions
}

% \PACS{PACS code1 \and PACS code2 \and more}
% \subclass{65N30 \and 65N12 \and 31B30}

 \begin{abstract}
The construction of $C^m$ conforming finite elements on simplicial meshes has recently advanced through the groundbreaking work of Hu, Lin, and Wu (Found. Comput. Math. 24, 2024). Their framework characterizes smoothness via moments of normal derivatives over subsimplices, leading to explicit degrees of freedom and unisolvence, unifying earlier constructions. However, the absence of explicit basis functions has left these spaces largely inaccessible for practical computation. In parallel, multivariate spline theory (Chui and Lai, J. Approx. Theory 60, 1990) enforces $C^m$ smoothness through linear constraints on Bernstein--B\'{e}zier coefficients, but stable, locally supported bases remain elusive beyond low dimensions. Building on the geometric decomposition of the simplicial lattice proposed by Chen and Huang (Math. Comp. 93, 2024), this work develops an explicit, computable framework for smooth finite elements. The degrees of freedom defined by moments of normal derivatives are modified to align with the dual basis of the Bernstein polynomials, yielding structured local bases on each simplex. Explicit basis construction is essential not merely for completeness, but for enabling efficient matrix assembly, global continuity, and scalable solution of high-order elliptic partial differential equations. This development closes the gap between theoretical existence and practical realization, making smooth finite element methods accessible to broad computational applications.
\end{abstract}

\keywords{Smooth finite element, Polyharmonic equation, Bernstein basis function}

\maketitle

%\tableofcontents

%%%%%%%%%%%%%%%%%%%%%%%%%%%%%%
\section{Introduction}
The construction of smooth finite element spaces on simplicial meshes is fundamental in numerical analysis, with broad applications in numerical methods for high-order partial differential equations (PDEs), isogeometric analysis, and geometric processing, among others.

Recently, Hu, Lin, and Wu~\cite{HuLinWu2024} achieved a major breakthrough by constructing $C^m$-conforming finite element spaces in arbitrary dimensions. Their framework characterizes smoothness via moments of normal derivatives over subsimplices, leading to explicit degrees of freedom (DoFs) and unisolvence, unifying earlier constructions in two~\cite{bramble_triangular_1970,argyris_tuba_1968,zenisek_interpolation_1970}, three~\cite{vzenivsek1974tetrahedral,Lai;Schumaker:2007Trivariate,zhang_family_2009}, and four dimensions~\cite{Zhang2016a}.

Later, Chen and Huang~\cite{chen_geometric_2021,Chen;Huang:2022FEMcomplex3D} introduced a geometric perspective, decomposing the simplicial lattice based on a distance structure, and emphasizing the underlying geometric organization, in contrast to the more combinatorial framework of Hu, Lin, and Wu~\cite{HuLinWu2024}.

Despite these advances, a critical gap remains: neither the Hu--Lin--Wu nor the Chen--Huang constructions provides explicit, computable bases. Without such bases, the practical implementation of smooth finite elements remains difficult. In computational mathematics, algorithmic realizability is often as impactful as theoretical construction. Explicit basis functions are essential not merely for completeness, but for enabling efficient matrix assembly, global continuity, and scalable application to numerical methods for high-order PDEs.

Meanwhile, multivariate spline theory~\cite{Chui;Lai:1990Multivariate,LaiSchumaker2007} typically enforces $C^m$ continuity through linear constraints on Bernstein--B\'{e}zier coefficients across element boundary. Although this interpolation-oriented approach is conceptually intuitive, constructing stable, locally supported bases remains challenging, particularly in three and higher dimensions~\cite{Davydov2001}.

This work develops an explicit basis construction and implementation strategy for $C^m$-conforming finite element spaces on simplicial meshes in arbitrary dimension. The degrees of freedom, originally defined by moments of normal derivatives over sub-simplices~\cite{HuLinWu2024}, are modified by replacing the integral moments with evaluations against the dual basis of the Bernstein polynomials. Based on these modified DoFs, basis functions are systematically constructed by solving a lower-triangular DoF--basis system with structured, explicitly computable entries. This structure enables efficient inversion, local basis construction, and practical global assembly of smooth finite element spaces. Numerical validation is provided through interpolation of smooth functions and the solution of high-order elliptic problems in two and three dimensions.

The proposed framework paves the way for the practical deployment of smooth
finite elements in high-dimensional and high-smoothness regimes. The resulting
$C^m$ elements are implemented on the FEALPy platform~\cite{fealpy}, supporting multiple backends such as NumPy~\cite{harris2020array}, PyTorch~\cite{paszke2019pytorch}, and JAX~\cite{jax2018github}, optimizing both accessibility and computational performance.

\subsection*{\textbf{Technical Outline}}

The construction begins with the decomposition of the \emph{simplicial lattice} $
\mathbb{T}^d_k := \left\{ \alpha \in \mathbb{N}^{0:d} : |\alpha| = k \right\}$ presented in~\cite{chen_geometric_2021,Chen;Huang:2022FEMcomplex3D}. Given an integer $m \geq 0$ and a sequence $\bs{r} = (r_0, \ldots, r_d)$ satisfying
\[
r_d = 0, \quad r_{d-1} = m, \quad r_\ell \geq 2r_{\ell+1} \quad \text{for} \quad \ell = d-2, \ldots, 0,
\]
and assuming $k \geq 2r_0 + 1 \geq 2^d m + 1$, the lattice $\mathbb{T}_k^d(T)$ embedded into a simplex $T$ admits the direct sum decomposition
\[%\label{intro:appdecT}
\begin{aligned}
\mathbb{T}_k^d(T) &= \Oplus_{\ell=0}^d \Oplus_{f \in \Delta_\ell(T)} S_\ell(f), \\
\text{where} \quad S_\ell(f) &= D(f, r_\ell) \setminus \left( \bigcup_{i=0}^{\ell-1} \bigcup_{e \in \Delta_i(f)} D(e, r_i) \right),
\end{aligned}
\]
where $D(f, r_\ell) = \left\{ \alpha \in \mathbb{T}_k^d : \dist(\alpha, f) \leq r_{\ell} \right\}$ is defined by a distance defined on $\mathbb{T}^d_k$. 

Next, we give explicit formulas for derivatives of Bernstein polynomials. Let $B^{\beta} = \frac{k!}{\beta!}\lambda^{\beta}$ denote the Bernstein polynomial associated to a lattice point $\beta \in \mathbb{T}_k^d$, where $\lambda = (\lambda_0, \lambda_1, \ldots, \lambda_d)$ are the barycentric coordinates. The $r$-th order derivative of $B^{\beta}$ is given by
\begin{equation}\label{intro:bern_grad}
\nabla^r B^{\beta} = \sum_{\substack{\alpha \in \mathbb{T}_r^d \\ \alpha \leq \beta}} \frac{r!k!}{(k-r)!\alpha!} \operatorname{sym}((\nabla\lambda)^{\otimes \alpha}) B^{\beta - \alpha}.
\end{equation}

Let $\{\texttt{b}^{\alpha}\}$ denote the dual basis to the Bernstein basis, defined by
\[
\texttt{b}^{\alpha}(B^{\beta}) := \langle \texttt{b}^{\alpha}, B^{\beta} \rangle = \delta_{\alpha,\beta}, \quad \alpha, \beta \in \mathbb T_k^d.
\]
The modified degrees of freedom take the form
\begin{equation}\label{intro:bDoF}
\left\langle \texttt{b}^{\alpha_f}, \frac{\partial^{|\alpha_{f^*}|}}{\partial n_f^{\alpha_{f^*}}} u \bigg|_f \right\rangle, \quad \alpha = \alpha_f + \alpha_{f^*} \in S_\ell(f).
\end{equation}

By selecting local normal bases $\{n_f\}$ for the normal planes associated with each sub-simplex $f$ and applying~\eqref{intro:bern_grad}, the entries of the DoF--basis matrix can be explicitly computed as
\[
D_{\alpha,\beta} = \left\langle \texttt{b}^{\alpha_f}, \nabla^{|\alpha_{f^*}|} B^{\beta} : n_f^{\alpha_{f^*}} \right\rangle, \quad \alpha, \beta \in \mathbb T_k^d.
\]
The resulting matrix $(D_{\alpha,\beta})$ is block lower triangular, with each diagonal block being a positively scaled identity, allowing for efficient local basis construction via direct inversion of $(D_{\alpha,\beta})$.

Let $\mathcal T_h$ be a shape regular triangulation.  A reference lattice decomposition is introduced based on $\mathcal T_h$
\[%\label{intro:Skrd}
\begin{aligned}
\mathbb S_{k,\bs r}^d:=& \Oplus_{\ell=0}^d\Oplus_{f\in \Delta_{\ell}(\mathcal T_h)}\hat{S}_{\ell}([f]), \quad \text{ where }\\
\hat{S}_{\ell}([f]):= &\{(\alpha_f, \gamma) \mid \alpha_f \in R_f(S_{\ell}(f)), \gamma \in \mathbb{T}_{k - |\alpha_{f}|}^{d-\ell - 1} \}.
\end{aligned}
\]
To enforce global $C^m$ continuity, fixed normal bases $\{\bs{N}_f^1, \ldots, \bs{N}_f^{d-\ell}\}$ are selected for the normal planes of each sub-simplex $f$, depending only on ascending ordered $[f]$ not on specific elements containing it.

Using the reference lattice decomposition and a global normal basis, the global degrees of freedom
\begin{equation}\label{intro:globalDoF}
\langle \texttt{b}^{\alpha_f}, \frac{\partial^{|\gamma|}}{\partial
    N_f^{\gamma}}u \mid_f \rangle, \quad \alpha = (\alpha_f, \gamma) \in
    \hat{S}_{\ell}([f]),\ f\in \Delta_{\ell}(\mathcal{T}_h),\ \ell = 0, 1, \ldots, d,
\end{equation}
are thus independent of the element containing $f$. The local and global normal derivatives are related through a change of basis, allowing transformation between local DoFs \eqref{intro:bDoF} and global DoFs \eqref{intro:globalDoF}.

\subsection*{\textbf{Organization of the Paper}}

The remainder of the paper is organized as follows. Section~\ref{sec:prelim} introduces basic notation, the simplicial lattice, and the geometric decomposition. Section~\ref{sec:B} introduce Bernstein polynomial and the formulas of its derivatives. Section~\ref{sec:local_frame} develops the construction of local basis functions, including the modification of degrees of freedom and the explicit computation of the DoF--basis matrix. Section~\ref{sec:global} addresses the enforcement of global $C^m$ continuity and describes the assembly of the global finite element space. Section~\ref{sec:numerics} presents numerical experiments validating the approximation properties of the constructed spaces through interpolation and the solution of high-order elliptic PDEs. As an example, we work out bases for $C^1$ element in two dimensions in Appendix~\ref{appdx:C12d}.

\section{Geometric Decomposition of the Simplicial Lattice}\label{sec:prelim}

In this section, we review the geometric decomposition of the simplicial lattice introduced in~\cite{chen_geometric_2021} and~\cite[Appendix~A]{Chen;Huang:2022FEMcomplex3D}. We refine the notation in
~\cite{chen_geometric_2021,Chen;Huang:2022FEMcomplex3D} by introducing abstract simplex and simplicial complex. 

\subsection{Geometric and Abstract Simplices}

Let $T \subset \mathbb{R}^{d}$ be a geometric $d$-simplex with vertices $\{\texttt{v}_0, \ldots, \texttt{v}_d\}$, defined as
\[
T = \operatorname{Convex}(\texttt{v}_0, \ldots, \texttt{v}_d) := \left\{ \sum_{i=0}^d \lambda_i \texttt{v}_i : \lambda_i \geq 0,\ \sum_{i=0}^d \lambda_i = 1 \right\},
\]
where $\lambda = (\lambda_0, \ldots, \lambda_d)$ are the barycentric coordinates. If the vertices $\texttt{v}_0, \ldots, \texttt{v}_d$ are affinely independent, then $T$ has nonzero $d$-dimensional volume.

It is convenient to introduce a standard reference simplex $\hat{T} \subset \mathbb{R}^d$, defined by the convex hull of the points $\{\bs{0}, \bs{e}_1, \ldots, \bs{e}_d\}$, where $\bs{e}_i$ denotes the $i$-th coordinate unit vector. Any geometric $d$-simplex $T$ can then be represented as the image of $\hat{T}$ under an affine transformation. In traditional finite element methods~\cite{Ciarlet1978}, calculations are often performed on the reference simplex $\hat T$ and transferred to the physical element via such affine mappings.

An abstract $d$-simplex $\TT$ is a finite set of cardinality $d+1$. Analogous to the reference simplex, the standard (combinatorial) $d$-simplex is the abstract simplex $\texttt{S}_d := \{0, 1, \ldots, d\}$. Any abstract $d$-simplex $\TT = \{\TT(0), \ldots, \TT(d)\}$ is combinatorially isomorphic to $\texttt{S}_d$ via the indexing map $i \mapsto \TT(i)$ for $i = 0, \ldots, d$. For example, $\TT = \{12, 10, 25\}$ is an abstract simplex, where $\TT(i)$ represents the global index of the $i$-th vertex. Therefore, $\TT$ can also be thought of as a local-to-global index mapping.

In practice, it is sufficient to work with the standard abstract simplex $\texttt{S}_d$, and generalize results to an arbitrary abstract simplex $\TT$ using the indexing map. Each geometric $d$-simplex determines an abstract $d$-simplex through its vertex set. Conversely, any abstract $d$-simplex can be realized geometrically by assigning its elements to distinct points in $\mathbb{R}^n$ for some $n \geq d$. Given a geometric simplex $T$ with vertices $\{\texttt{v}_0, \ldots, \texttt{v}_d\}$ and an abstract simplex $\TT = \{\TT(0), \ldots, \TT(d)\}$, we say that $T$ is a geometric realization of $\TT$, denoted by $\TT(T)$, via the correspondence $\TT(i) \mapsto \texttt{v}_i$.

Note that a single abstract $d$-simplex may have multiple geometric realizations. For instance, $\texttt{S}_d(T_1) \neq \texttt{S}_d(T_2)$ for two different geometric simplices $T_1$ and $T_2$. However, the combinatorial structure derived from $\texttt{S}_d$ remains invariant and can be transferred to any abstract simplex $\TT$ through the index map.

\subsection{The Simplicial Lattice}

For integers $l \leq m$, let $\alpha \in \mathbb{N}^{l:m}$ denote a multi-index $\alpha = (\alpha_l, \ldots, \alpha_m)$ with nonnegative integer entries. The total degree is defined by $|\alpha| := \sum_{i=l}^m \alpha_i$. For $\alpha, \beta \in \mathbb{N}^{l:m}$, we write $\alpha \geq \beta$ if $\alpha_i \geq \beta_i$ for all $i = l, \ldots, m$, and $\alpha \geq c \in \mathbb{R}$ if $\alpha_i \geq c$ for all $i$.

The \emph{simplicial lattice} of degree $k$ in dimension $d$ is defined as
\[
\mathbb{T}^d_k := \left\{ \alpha \in \mathbb{N}^{0:d} : |\alpha| = k \right\},
\]
whose elements are referred to as \emph{lattice points}.

Given a geometric $d$-simplex $T$ with vertices $\texttt{v}_0, \ldots, \texttt{v}_d$, the lattice $\mathbb{T}^d_k$ can be embedded into $T$ by interpreting each multi-index $\alpha \in \mathbb{T}^d_k$ as barycentric coordinates scaled by $1/k$. Specifically, define the mapping
\[
x: \mathbb{T}^d_k \to T, \quad x_\alpha := \sum_{i=0}^d \lambda_i(\alpha) \, \texttt{v}_i, \quad \lambda_i(\alpha) := \frac{\alpha_i}{k}.
\]
In this way, each $\alpha \in \mathbb{T}^d_k$ corresponds to the point $x_\alpha \in T$ whose barycentric coordinates are $\alpha / k$. The image of this embedding is called the \emph{geometric realization} (or \emph{embedding}) of $\mathbb{T}^d_k$, and is denoted by $\mathbb{T}^d_k(T)$; see Fig.~\ref{fig:lattice2D} for an illustration of $\mathbb{T}^2_8(T)$. This structure was introduced as the \emph{principal lattice} in~\cite{nicolaides1973class}.

While $\mathbb{T}^d_k$ is a purely combinatorial object, its geometric realization $\mathbb{T}^d_k(T)$ permits the application of geometric and analytic operations within the simplex $T$. For any subset $D \subseteq T$, define
\[
\mathbb{T}^d_k(D) := \{ \alpha \in \mathbb{T}^d_k : x_{\alpha} \in D \}
\]
as the set of lattice points whose geometric images lie within $D$. In particular, $\mathbb{T}^d_k(\partial T)$ denotes the subset of lattice points that lie on the boundary $\partial T$.

\subsection{Sub-simplices and Sub-simplicial Lattices}

Let $\TT$ be an abstract $d$-simplex. For an integer $0 \leq \ell \leq d$, any subset of $\TT$ with cardinality $\ell + 1$ is called an $\ell$-dimensional sub-simplex. The set of all such $\ell$-dimensional sub-simplices is denoted by $\Delta_{\ell}(\TT)$. The full collection of sub-simplices is given by the disjoint union
\[
\Delta(\TT) := \Oplus_{\ell=0}^d \Delta_{\ell}(\TT).
\]
The cardinalities satisfy $|\Delta_{\ell}(\TT)| = \binom{d+1}{\ell+1}$ and $|\Delta(\TT)| = 2^{d+1} - 1$.

For $f \in \Delta_{\ell}(\texttt{S}_d)$, we define the relabeling map
\[
f(\TT) := \{\TT(f(0)), \ldots, \TT(f(\ell))\} \in \Delta_{\ell}(\TT),
\]
which induces an isomorphism between $\Delta_{\ell}(\texttt{S}_d)$ and $\Delta_{\ell}(\TT)$. Vice verse, given an $f\in \Delta_{\ell}(\TT)$, we use notation
$$
f(\texttt{S}_d)\in \Delta_{\ell}(\texttt{S}_d), \text{ so that } (f(\texttt{S}_d))(\TT) = f.
$$
If we treat $\TT$ as the local-to-global index mapping, $f(\TT) = \TT \circ f(\texttt{S}_d)$ and $f(\texttt{S}_d) = \TT^{-1}\circ f(\TT)$. 
For example, for $\TT = \{ 2, 10, 25, 78\}$ and $f(\TT) = \{10, 25\}$, then $f(\texttt{S}_3) = \{1, 2\}$. And for $f(\texttt{S}_3) = \{0, 1\}\in \Delta_{1}(\texttt{S}_3)$, $f(\TT) = \{2, 10\}$. 

Given a geometric realization $T$ of $\TT$, each sub-simplex $f \in \Delta_\ell(\texttt{S}_d)$ (for $0 \leq \ell \leq d$) induces a geometric $\ell$-simplex defined by
\[
f(T) := \operatorname{Convex}(\texttt{v}_{f(0)}, \ldots, \texttt{v}_{f(\ell)}).
\]
Accordingly, the set of geometric $\ell$-simplices is denoted as
\[
\Delta_{\ell}(T) := \{f(T) : f \in \Delta_\ell(\texttt{S}_d)\}.
\]
For simplicity, we may use a single notation $f$ to refer to the abstract sub-simplex $f \in \Delta_\ell(\texttt{S}_d)$, its relabeled version $f(\TT) \in \Delta_\ell(\TT)$, and its geometric realization $f(T)$, unless clarification is required.

For $f \in \Delta_\ell(\texttt{S}_d)$ with $0 \leq \ell \leq d-1$, the \emph{opposite face} $f^* \in \Delta_{d-\ell-1}(\texttt{S}_d)$ is defined as the complement
\[
f^* := \texttt{S}_d \setminus f, \quad \text{so that} \quad f^*(\TT) = \TT \setminus f(\TT).
\]
This complement also admits a natural geometric realization
\[
f^*(T) = \operatorname{Convex}(\texttt{v}_{f^*(0)}, \ldots, \texttt{v}_{f^*(d - \ell - 1)}).
\]
Again we  use a single notation $f^*$ to refer to the abstract sub-simplex $f^* \in \Delta_{d-\ell-1}(\texttt{S}_d)$, its relabeled version $f^*(\TT) \in \Delta_{d-\ell-1}(\TT)$, and its geometric realization $f^*(T)$ if it is clear from the context.

In particular, let $F_i := \{i\}^* \in \Delta_{d-1}(\texttt{S}_d)$ denote the $(d-1)$-dimensional face opposite to the $i$-th vertex. Its geometric realization is given by the zero level set of $\lambda_i$:
\[
F_i(T) = \{x \in T : \lambda_i(x) = 0\}, \quad i = 0, \ldots, d.
\]
More generally, for $f \in \Delta_{\ell}(\texttt{S}_d)$, the geometric simplex $f(T)$ satisfies the identity
\[
f(T) = \bigcap_{i \in f^*} F_i (T) = \{x \in T : \lambda_i(x) = 0,\ i \in f^*\},
\]
which follows from the set-theoretic identity for the abstract simplicies:
\[
\bigcap_{i \in f^*} \{i\}^* = \left( \bigcup_{i \in f^*} \{i\} \right)^* = (f^*)^* = f.
\]

Let $f \in \Delta_\ell(\TT)$ and $T$ be a geometric realization of $\TT$. The sub-simplicial lattice $\mathbb{T}^d_k(f)$ denotes the subset of lattice points whose geometric realization lies in $f(T)$. To relate lattice indices across sub-simplices, we define the \emph{prolongation operator} $E_f : \mathbb{T}_s^\ell \to \mathbb{T}_s^d(f)$, which maps $\alpha \in \mathbb{T}_s^\ell$ to $\mathbb{T}_s^d(f)$ by
\[
E_f(\alpha)_{f(i)} = \alpha_i, \ i=0,1,\ldots, \ell, \qquad E_f(\alpha)_j = 0 \quad \text{for } j \in f^*.
\]
For example, if $f = \{1,2,5\} \subseteq \texttt{S}_5 = \{0,1,\ldots,5\}$ and $\alpha = (\alpha_0, \alpha_1, \alpha_2) \in \mathbb{T}_s^2$, then
\[
E_f(\alpha) = (0, \alpha_0, \alpha_1, 0, 0, \alpha_2) \in \mathbb{T}_s^5(f).
\]

Conversely, given $\alpha \in \mathbb{T}_k^d$ and $f \in \Delta_\ell(\texttt{S}_d)$, the \emph{restriction} $\alpha_f = R_f(\alpha) \in \mathbb{T}_s^\ell$, with $s = \sum_{i \in f} \alpha_i$, is defined component-wise as
\begin{equation}\label{eq:restriction}
(\alpha_f)_i =( R_f(\alpha))_i := \alpha_{f(i)}, \quad i = 0, \ldots, \ell.
\end{equation}
With a slight abuse of notation, $\alpha_f$ may also refer to its extension $E_f(\alpha_f)$. This leads to the decomposition
\[%\label{eq:decalpha-short}
\alpha = \alpha_f \oplus \alpha_{f^*} := E_f(\alpha_f) + E_{f^*}(\alpha_{f^*}), \quad |\alpha| = |\alpha_f| + |\alpha_{f^*}|.
\]
For example, for $\alpha = (\alpha_0, \alpha_1, \ldots, \alpha_5)$ and $f = \{1,2,5\}$,
\[
\begin{aligned}
\alpha_f &= (\alpha_1, \alpha_2, \alpha_5),  \quad E_f(\alpha_f) = (0, \alpha_1, \alpha_2, 0, 0, \alpha_5), \\
\alpha_{f^*} &= (\alpha_0, \alpha_3, \alpha_4), \quad E_{f^*}(\alpha_{f^*}) = (\alpha_0, 0, 0, \alpha_3, \alpha_4, 0).
\end{aligned}
\]
When the focus is on values, $\alpha_f$ and $E_f(\alpha_f)$ may be used interchangeably; when it is necessary to indicate the support explicitly, the notation $E_f(\alpha_f)$ will be used.

\subsection{Distance}
\begin{figure}[htbp]
\subfigure[Distance to an edge $f = \{1, 3\}$.]{
\begin{minipage}[t]{0.45\linewidth}
\centering
\includegraphics*[height=3.75cm]{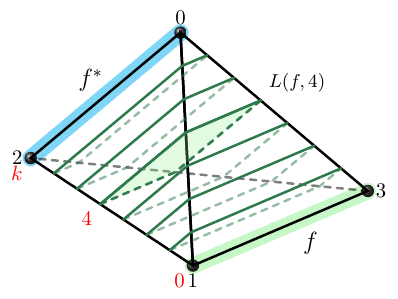}
\end{minipage}}%%
\qquad
\subfigure[Distance to a face $f = \{1, 2, 3\}$.]{
\begin{minipage}[t]{0.45\linewidth}
\centering
\includegraphics*[height=3.55cm]{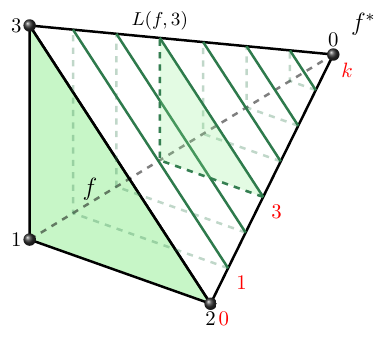}
\end{minipage}}
\caption{Illustration of $L(f, s) = \left\{ \alpha \in \mathbb{T}_k^d : \dist(\alpha, f) = s \right\}$ and $D(f, r) = \bigcup_{s = 0}^r L(f, s)$.}
\label{fig:dist}
\end{figure}

Given $f \in \Delta_\ell(\texttt{S}_d)$ with $0 \leq \ell \leq d-1$, the \emph{distance} from a lattice point $\alpha \in \mathbb{T}_k^d$ to $f$ is defined as
\[
\dist(\alpha, f) := |\alpha_{f^*}| = \sum_{i \in f^*} \alpha_i.
\]
We define the \emph{lattice tube} and the \emph{lattice layer} of $f$ by
\[
D(f, r) := \left\{ \alpha \in \mathbb{T}_k^d : \dist(\alpha, f) \le r \right\}, \qquad
L(f, s) := \left\{ \alpha \in \mathbb{T}_k^d : \dist(\alpha, f) = s \right\},
\]
so that
\[
D(f, r) = \bigcup_{s=0}^r L(f, s), \qquad L(f, s) = L(f^*, k - s).
\]
Under the geometric embedding to the reference simplex $\hat T$, the lattice points in $L(f, s)$ lie on the affine hyperplane
\[
x_{f^*(0)} + x_{f^*(1)} + \cdots + x_{f^*(d - \ell - 1)} = s.
\]
See Fig.~\ref{fig:dist} for illustrations of distance from lattice points to sub-simplices of different dimensions.

%\noindent
For a vertex $i \in \Delta_0(\texttt{S}_d)$, we have
\[
D(i, r) = \left\{ \alpha \in \mathbb{T}_k^d \mid  |\alpha_{i^*}| \le r \right\},
\]
which is combinatorially isomorphic to the degree-$r$ lattice $\mathbb{T}_r^d$; see the green triangles in Fig.~\ref{fig:lattice2D}. For $f \in \Delta_{d-1}(\texttt{S}_d)$, the geometric realization of $D(f, r)$ is a trapezoidal region of height $r$ with base face $f$. More generally, for any $f \in \Delta_\ell(\texttt{S}_d)$, the hyperplanes defined by $L(f, s)$ partition $\mathbb{T}_k^d(T)$ into two regions, and $D(f, r)$ corresponds to the region on the side containing $f$.

\begin{figure}[htbp]
\begin{center}
\includegraphics[width=4.025cm]{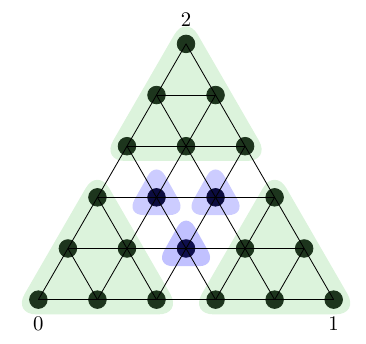}
\quad \quad
\includegraphics[width=4.25cm]{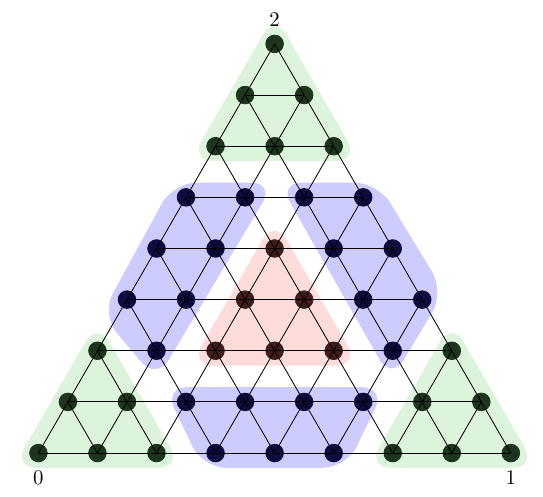}
\caption{For $\bs{r} = (2,1,0)$, two lattice decompositions of the simplicial lattice $\mathbb{T}_5^2$ (left) and $\mathbb{T}_8^2$ (right). The green triangles are $D(\textnormal{\texttt{v}}, 2)$. The purple trapezoid is $S_1(f)$, and the red triangle is $S_2(T)$.}
\label{fig:lattice2D}
\end{center}
\end{figure}

\subsection{Lattice Decomposition}
In~\cite{chen_geometric_2021} and~\cite[Appendix~A]{Chen;Huang:2022FEMcomplex3D}, a decomposition of the simplicial lattice $\mathbb{T}_k^d$ was introduced based on the distance structure.

\begin{theorem}\label{th:appdecT}
Let $m \geq 0$ be an integer, and let $\bs{r} = (r_0, r_1, \ldots, r_d)$ be a sequence of integers satisfying
\[
r_d = 0, \quad r_{d-1} = m, \quad r_\ell \geq 2 r_{\ell+1} \quad \text{for } \ell = d-2, \ldots, 0.
\]
Assume that the degree $k$ satisfies $k \geq 2r_0 + 1 \geq 2^d m + 1$. Then the lattice $\mathbb{T}_k^d$ admits the direct sum decomposition
\begin{equation}\label{eq:appdecT}
\begin{aligned}
\mathbb{T}_k^d &= \Oplus_{\ell=0}^d \Oplus_{f \in \Delta_\ell(\TT)} S_\ell(f), \\
\text{where} \qquad S_\ell(f) &= D(f, r_\ell) \setminus \left( \bigcup_{i=0}^{\ell-1} \bigcup_{e \in \Delta_i(f)} D(e, r_i) \right).
\end{aligned}
\end{equation}
\end{theorem}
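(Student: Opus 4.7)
The plan is to prove the two properties defining a direct sum: \textbf{coverage}, that every $\alpha \in \mathbb{T}_k^d$ lies in some $S_\ell(f)$; and \textbf{disjointness}, that distinct pairs $(\ell, f)$ yield disjoint $S_\ell(f)$. Both reduce to bookkeeping on the distance $\dist(\alpha, f) = |\alpha_{f^*}|$ governed by the hypotheses $r_\ell \ge 2 r_{\ell+1}$ and $k \ge 2 r_0 + 1$.

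For coverage, I would assign each $\alpha \in \mathbb{T}_k^d$ to its minimal compatible level
\[
\ell^*(\alpha) := \min\bigl\{\ell \in \{0,\ldots,d\} : \exists\, f \in \Delta_\ell(\TT),\ |\alpha_{f^*}| \le r_\ell\bigr\}.
\]
This set is nonempty since $\ell = d$, $f = \TT$ always works ($r_d = 0$ and $f^* = \emptyset$). Picking any minimizing $f \in \Delta_{\ell^*}(\TT)$, the very choice of $\ell^*$ then forces $|\alpha_{e^*}| > r_i$ for every $i < \ell^*$ and every $e \in \Delta_i(\TT)$, in particular for every $e \in \Delta_i(f)$, which is precisely the condition for $\alpha \in S_{\ell^*}(f)$.

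For disjointness, I would argue by contradiction, taking $\alpha \in S_\ell(f) \cap S_{\ell'}(f')$ with $(\ell, f) \ne (\ell', f')$ and, without loss of generality, $\ell \le \ell'$. If $f \cap f' = \emptyset$, then $f' \subseteq f^*$ gives $|\alpha_{f^*}| \ge |\alpha_{f'}| = k - |\alpha_{f'^*}| \ge k - r_{\ell'}$, which combined with $|\alpha_{f^*}| \le r_\ell$ yields $k \le r_\ell + r_{\ell'} \le 2 r_0$, contradicting $k \ge 2 r_0 + 1$. Otherwise $e := f \cap f' \ne \emptyset$ with $\dim e = i$: either $e = f \subsetneq f'$, in which case $e \in \Delta_\ell(f')$ with $\ell < \ell'$ forces $|\alpha_{f^*}| = |\alpha_{e^*}| > r_\ell$ from $\alpha \in S_{\ell'}(f')$, contradicting $|\alpha_{f^*}| \le r_\ell$; or $e \subsetneq f$ with $i < \ell$, in which case the De~Morgan identity $e^* = f^* \cup f'^*$ together with inclusion--exclusion gives $|\alpha_{e^*}| \le r_\ell + r_{\ell'} \le 2 r_\ell$, while iterating the doubling hypothesis from $j=i$ up to $j=\ell-1$ produces $r_i \ge 2^{\ell - i} r_\ell \ge 2 r_\ell$, contradicting $|\alpha_{e^*}| > r_i$ imposed by $\alpha \in S_\ell(f)$.

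The main obstacle is isolating where each structural hypothesis is essential. The disjoint case $f \cap f' = \emptyset$ cannot be refuted by any local comparison on a shared face, and must instead be ruled out by the global size constraint $k \ge 2 r_0 + 1$ — this is the unique place where the lower bound on $k$ is used. Conversely, the doubling hypothesis $r_\ell \ge 2 r_{\ell+1}$ is tuned exactly to close the gap in the overlapping case, amplifying the trivial bound $|\alpha_{e^*}| \le 2 r_\ell$ through $\ell - i$ levels so as to surpass $r_i$; any weaker growth would leave a middle-dimensional intersection unresolved. Once these two points are secured, the remaining steps are routine multi-index manipulations.
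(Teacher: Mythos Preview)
The paper does not give its own proof of this theorem; it is stated in Section~2.4 as a review result with a pointer to \cite{chen_geometric_2021} and \cite[Appendix~A]{Chen;Huang:2022FEMcomplex3D}. Your argument is correct and is the natural distance-based proof one expects to find in those references: coverage via the minimal-level assignment $\ell^*(\alpha)$, and disjointness via the three-way case split on $f\cap f'$ (empty intersection ruled out by $k\ge 2r_0+1$, the nested case $f\subsetneq f'$ by the exclusion clause in $S_{\ell'}(f')$, and the proper-overlap case by iterating $r_i\ge 2r_{i+1}$ down to $r_i\ge 2r_\ell$). All inequalities and the De~Morgan step $e^*=f^*\cup f'^*$ check out, so there is nothing to correct.
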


Here, $S_\ell(f)$ consists of the lattice points that are within distance $r_\ell$ to $f$ but outside the distance tubes of all lower-dimensional sub-simplices contained in $f$. The decomposition~\eqref{eq:appdecT} partitions the entire lattice into disjoint sets, each associated with a face $f$ of dimension $\ell$. We refer to Fig.~\ref{fig:lattice2D} for  decompositions of the simplicial lattice $\mathbb{T}_5^2$ and $\mathbb{T}_8^2$ with $\bs{r} = (2,1,0)$.

\subsection{Triangulation and Simplicial Complex}

Let $\Omega \subset \mathbb{R}^d$ be a polyhedral domain with $d \geq 1$. A geometric triangulation $\mathcal{T}_h$ of $\Omega$ is a collection of $d$-simplices such that
\[
\bigcup_{T \in \mathcal{T}_h} T = \Omega, \qquad \mathring{T}_i \cap \mathring{T}_j = \emptyset \quad \text{for all } T_i \neq T_j \in \mathcal{T}_h,
\]
where $\mathring{T}$ denotes the interior of the simplex $T$. The subscript $h$ represents the mesh size, i.e., the maximum diameter of all elements. In this work, we restrict our attention to \emph{conforming triangulations}, where the intersection of any two simplices is either empty or a common sub-simplex of lower dimension.

While classical finite element methods operate primarily on geometric triangulations, we adopt a topological perspective using simplicial complexes from algebraic topology~\cite{hatcher2005algebraic} to better formalize the underlying combinatorial structure.

A \emph{simplicial complex} $\mathcal{S}$ over a finite vertex set $V$ is a collection of subsets of $V$ such that if $\TT \in \mathcal{S}$ is a $d$-simplex, then all its sub-simplices $\Delta(\TT)$ are also in $\mathcal{S}$. Elements of $V$ are referred to as \emph{vertices}, and elements of $\mathcal{S}$ are called \emph{simplices}. We denote by $\Delta_{\ell}(\mathcal{S})$ the set of all $\ell$-simplices in $\mathcal{S}$. A simplex $\TT \in \mathcal{S}$ is called \emph{maximal} if it is not a proper subset of any other simplex in $\mathcal{S}$. The complex $\mathcal{S}$ is said to be \emph{pure of dimension $d$} if all maximal simplices are $d$-simplices.

Without loss of generality, we let $V = \{1, 2, \ldots, N\}$ with $N \geq d+1$. Here, vertices are considered as abstract entities. A $d$-dimensional abstract simplicial complex $\mathcal{S}$ can be represented by a matrix \texttt{elem(1:NT, 0:d)}, where \texttt{NT} is the number of elements. Each row \texttt{elem(t, 0:d)} is an abstract $d$-simplex, consists of the global vertex indices of the $d$-simplex $t$, and $\texttt{S}_d = \{0, 1, \ldots, d\}$ serves as the local index set. Since different permutations of vertices represent the same abstract simplex, ordering becomes relevant when managing global degrees of freedom.

The matrix \texttt{node(1:N, 1:d)} provides a geometric realization of the abstract vertices, which will induce a geometric realization of the simplicial complex. For instance, in 2D, \texttt{node(k, 1:2)} stores the $x$- and $y$-coordinates of the $k$-th vertex. We refer the reader to~\cite{Chen2018Programming} for a concise introduction to the \texttt{node} and \texttt{elem} data structures, and to~\cite{Chen2008ifem} for detailed discussions on indexing, ordering, and orientation via the \texttt{sc} and \texttt{sc3} documentation in $i$FEM.

The geometric realization of the maximal simplices $\Delta_d(\mathcal{S})$ yields the geometric triangulation $\mathcal{T}_h$. Throughout this paper, we follow the finite element convention of working directly with $\mathcal{T}_h$, and adopt the notation $\Delta_\ell(\mathcal{T}_h)$ to denote the set of all $\ell$-simplices in the triangulation.

The lattice points over a conforming triangulation $\mathcal{T}_h$ are given by the union
\[%\label{eq:Xunion}
\mathbb{T}_k^d(\mathcal{T}_h) := \bigcup_{T \in \mathcal{T}_h} \mathbb{T}_k^d(T).
\]
Note that this union contains duplicate entries: for instance, a sub-simplex $f \in \Delta_\ell(\mathcal{T}_h)$ may belong to multiple elements $T \in \mathcal{T}_h$, and the corresponding sets $\mathbb{T}_k^d(f)$ are counted repeatedly. A disjoint version of the lattice is given by
\begin{equation}\label{eq:Xsum}
\mathbb{T}_k^d(\mathcal{T}_h) = \Oplus_{\ell = 0}^d \Oplus_{f \in \Delta_\ell(\mathcal{T}_h)} \mathbb{T}_k^\ell(\mathring{f}),
\end{equation}
where $\mathbb{T}_k^\ell(\mathring{f})$ denotes the set of lattice points whose geometric embeddings lie in the interior of the realization of $f$ and for $\ell = 0$, $\mathring{f} = f$. 

For practical implementations, a unique global indexing of lattice points associated with vertices, edges, faces, and interiors is required. This involves constructing a mapping from local indices to global indices, which is further discussed in Section~\ref{sec:global}.

\section{Bernstein Basis and Derivatives}\label{sec:B}

In this section, we introduce the Bernstein basis for the polynomial space $\mathbb{P}_k(T)$ over a simplex $T$ and derive explicit formulas for its integrals and derivatives.

\subsection{Bernstein Basis}

The \emph{Bernstein basis} for $\mathbb{P}_k(T)$ is defined by
\[%\label{eq:Bk}
\mathcal{B}_k := \left\{ B^{\alpha} := \frac{k!}{\alpha!} \lambda^{\alpha} : \alpha \in \mathbb{T}_k^d \right\},
\]
where $\lambda = (\lambda_0, \ldots, \lambda_d)$ are the barycentric coordinates associated with the vertices of $T$, and $\lambda^{\alpha} = \lambda_0^{\alpha_0} \cdots \lambda_d^{\alpha_d}$.

An important property of the Bernstein polynomials is the explicit formula for their integral over $T$ (cf.~\cite{ChenFEM}):
\begin{equation}\label{eq:int-bernstein}
\int_T B^{\alpha} \dx = \frac{k! \, d!}{(k+d)!} |T|,
\end{equation}
where $|T|$ denotes the $d$-dimensional volume of $T$.

For a subsimplex $f \in \Delta_\ell(\TT)$, we define the \emph{Bernstein basis} over $f$ as
\[
\mathcal{B}_k(f) := \left\{ B^{\alpha}_f := \frac{k!}{\alpha!} \lambda_f^{\alpha} : \alpha \in \mathbb{T}_k^\ell \right\},
\]
where $\lambda_f = (\lambda_{f(0)}, \ldots, \lambda_{f(\ell)})$ are the barycentric coordinates associated with the vertices of $f$, and $\lambda_f^{\alpha} = \prod_{i=0}^{\ell} \lambda_{f(i)}^{\alpha_i}$.

The correspondence between lattice points $\alpha \in \mathbb{T}_k^d$ and Bernstein polynomials $B^\alpha$ allows many properties of polynomials to be understood directly through the structure of the simplicial lattice.

\subsection{Tensors}

We use the standard Euclidean inner product to identify $(\mathbb{R}^d)' \cong \mathbb{R}^d$, and present all tensor operations without explicit reference to the dual space.

For integers $r, d \geq 1$, the $r$-th order tensor space over $\mathbb{R}^d$ is defined as
\[
\mathbb{R}^{d,r} := (\mathbb{R}^d)^{\otimes r} = \underbrace{\mathbb{R}^d \otimes \cdots \otimes \mathbb{R}^d}_{r \text{ times}}.
\]
The standard inner product on $\mathbb{R}^d$ extends naturally to $\mathbb{R}^{d,r}$, and is denoted by the symbol $:$. For elementary tensors, we have
\begin{equation}\label{eq:innerproduct1}
(\boldsymbol{t}_1 \otimes \cdots \otimes \boldsymbol{t}_r) :
(\boldsymbol{n}_1 \otimes \cdots \otimes \boldsymbol{n}_r) =
\prod_{i=1}^r \boldsymbol{t}_i \cdot \boldsymbol{n}_i,
\end{equation}
where $\cdot$ denotes the standard inner product in $\mathbb{R}^d$.

Let $\{\boldsymbol{t}_1, \ldots, \boldsymbol{t}_d\}$ be a basis of $\mathbb{R}^d$, and let $\{\hat{\boldsymbol{t}}_1, \ldots, \hat{\boldsymbol{t}}_d\} \subset \mathbb{R}^d$ denote its dual basis, satisfying
\[
\hat{\boldsymbol{t}}_i \cdot \boldsymbol{t}_j = \delta_{i,j}, \qquad \text{for } 1 \le i, j \le d,
\]
where $\delta_{i,j}$ is the Kronecker delta. Then any tensor $\boldsymbol{\tau} \in \mathbb{R}^{d,r}$ can be written as
% \begin{equation*}\label{eq:tauexpress1}
\[
\boldsymbol{\tau} = \tau_{i_1 \cdots i_r} \, \boldsymbol{t}_{i_1} \otimes \cdots \otimes \boldsymbol{t}_{i_r},
\]
% \end{equation*}
where repeated indices are summed using Einstein notation. The coefficients are given by the inner product with the dual basis
% \begin{equation*}\label{eq:tauexpress2}
\[
\tau_{i_1 \cdots i_r} = \boldsymbol{\tau} : \left( \hat{\boldsymbol{t}}_{i_1} \otimes \cdots \otimes \hat{\boldsymbol{t}}_{i_r} \right).
\]
% \end{equation*}
When the canonical orthonormal basis $\{\boldsymbol{e}_i\}_{i=1}^d$ is used, we simply write $\boldsymbol{\tau} = (\tau_{i_1 \cdots i_r})$ as a $d^r$-dimensional array.

The change of coordinates follows directly from the definition.
\begin{lemma}%[Change of Coordinates Formula]
\label{lem:tauexpressexchbasis}
Let $\{\boldsymbol{s}_1, \ldots, \boldsymbol{s}_d\}$ be a basis of $\mathbb{R}^d$, and let $\{\hat{\boldsymbol{s}}_1, \ldots, \hat{\boldsymbol{s}}_d\} \subset \mathbb{R}^d$ denote its dual basis.
Consider a tensor $\boldsymbol{\tau} \in \mathbb{R}^{d,r}$ represented as
\[
\boldsymbol{\tau} = \tau_{i_1 \cdots i_r} \, \boldsymbol{t}_{i_1} \otimes \cdots \otimes \boldsymbol{t}_{i_r} = \widetilde{\tau}_{i_1 \cdots i_r} \, \boldsymbol{s}_{i_1} \otimes \cdots \otimes \boldsymbol{s}_{i_r}.
\]
Then its components satisfy
\begin{equation}\label{eq:tauexpressexchbasis}
\widetilde{\tau}_{i_1 \cdots i_r} = \boldsymbol{\tau} : \left( \hat{\boldsymbol{s}}_{i_1} \otimes \cdots \otimes \hat{\boldsymbol{s}}_{i_r} \right)= \tau_{j_1 \cdots j_r} \, (\boldsymbol{t}_{j_1} \cdot \hat{\boldsymbol{s}}_{i_1}) \cdots (\boldsymbol{t}_{j_r} \cdot \hat{\boldsymbol{s}}_{i_r}).
\end{equation}
\end{lemma}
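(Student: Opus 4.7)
The plan is to verify both equalities in \eqref{eq:tauexpressexchbasis} by contracting the tensor $\boldsymbol{\tau}$ with the elementary dual tensor $\hat{\boldsymbol{s}}_{i_1} \otimes \cdots \otimes \hat{\boldsymbol{s}}_{i_r}$ using the inner product on $\mathbb{R}^{d,r}$, and reading off the coefficient from each of the two representations of $\boldsymbol{\tau}$.

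First, I would establish the first equality $\widetilde{\tau}_{i_1 \cdots i_r} = \boldsymbol{\tau} : (\hat{\boldsymbol{s}}_{i_1} \otimes \cdots \otimes \hat{\boldsymbol{s}}_{i_r})$. Substituting the $\{\boldsymbol{s}_i\}$-representation of $\boldsymbol{\tau}$ and invoking the bilinearity of the inner product on $\mathbb{R}^{d,r}$, the right-hand side becomes $\widetilde{\tau}_{j_1 \cdots j_r}\, (\boldsymbol{s}_{j_1} \otimes \cdots \otimes \boldsymbol{s}_{j_r}) : (\hat{\boldsymbol{s}}_{i_1} \otimes \cdots \otimes \hat{\boldsymbol{s}}_{i_r})$. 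Applying the product formula \eqref{eq:innerproduct1} on elementary tensors reduces this to $\widetilde{\tau}_{j_1 \cdots j_r} \prod_{\ell=1}^r (\boldsymbol{s}_{j_\ell} \cdot \hat{\boldsymbol{s}}_{i_\ell})$, and the biorthogonality $\hat{\boldsymbol{s}}_i \cdot \boldsymbol{s}_j = \delta_{i,j}$ collapses the sum (via Einstein summation) to exactly $\widetilde{\tau}_{i_1 \cdots i_r}$.

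Next, for the second equality I would plug in the $\{\boldsymbol{t}_i\}$-representation of $\boldsymbol{\tau}$ into the same contraction. By bilinearity this yields $\tau_{j_1 \cdots j_r}\, (\boldsymbol{t}_{j_1} \otimes \cdots \otimes \boldsymbol{t}_{j_r}) : (\hat{\boldsymbol{s}}_{i_1} \otimes \cdots \otimes \hat{\boldsymbol{s}}_{i_r})$, and a second application of \eqref{eq:innerproduct1} produces the product $\prod_{\ell=1}^r (\boldsymbol{t}_{j_\ell} \cdot \hat{\boldsymbol{s}}_{i_\ell})$. Combining with the first equality gives the desired identity.

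There is no real obstacle here; the argument is a direct unwinding of the definitions, and the only point requiring mild care is keeping track of the Einstein summation indices and ensuring the contraction acts slot-by-slot so that the duality pairings separate into the product displayed in \eqref{eq:tauexpressexchbasis}. The same argument specialises to the expression for $\tau_{i_1 \cdots i_r}$ in terms of $\{\hat{\boldsymbol{t}}_i\}$ discussed just before the lemma, so the two formulas are perfectly symmetric.
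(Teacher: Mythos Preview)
Your proposal is correct and matches the paper's approach: the paper states that the lemma ``follows directly from the definition'' and gives no further proof, so your direct verification via contraction with $\hat{\boldsymbol{s}}_{i_1}\otimes\cdots\otimes\hat{\boldsymbol{s}}_{i_r}$ and application of \eqref{eq:innerproduct1} is exactly the intended argument.
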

%\begin{proof}
%By \eqref{eq:tauexpress2}, we have $\widetilde{\tau}_{i_1 \cdots i_r} = \boldsymbol{\tau} : \left( \hat{\boldsymbol{s}}_{i_1} \otimes \cdots \otimes \hat{\boldsymbol{s}}_{i_r} \right)$.    
%Then \eqref{eq:tauexpressexchbasis} follows from~\eqref{eq:tauexpress1}.
%\end{proof}

To index tensor monomials, define the increasing multi-index set
\[
\mathcal{I}_\ell^r := \left\{ (i_1, \ldots, i_r) \in \{1, \ldots, \ell\}^r : i_1 \le i_2 \le \cdots \le i_r \right\}.
\]
There exists a natural one-to-one correspondence between $\mathcal{I}_\ell^r$ and the simplicial lattice $\mathbb{T}_r^{\ell - 1}$: for $\alpha = (\alpha_1, \ldots, \alpha_\ell) \in \mathbb{T}_r^{\ell - 1}$, define
\begin{equation}\label{eq:alpha2i}
I^{\alpha} := (\underbrace{1, \ldots, 1}_{\alpha_1}, \ldots, \underbrace{\ell, \ldots, \ell}_{\alpha_\ell}) \in \mathcal{I}_\ell^r.
\end{equation}
Namely $\alpha_i$ is the number of index $i$ appearing in $I^{\alpha}$.

Given $\alpha \in \mathbb{T}_r^{\ell-1}$ and a vector array $t = (\boldsymbol{t}_1, \ldots, \boldsymbol{t}_\ell)$ with $\boldsymbol{t}_i \in \mathbb{R}^d$, define the tensor monomial
\[
t^{\alpha} := t^{\otimes \alpha} := \boldsymbol{t}_1^{\otimes \alpha_1} \otimes \cdots \otimes \boldsymbol{t}_\ell^{\otimes \alpha_\ell}
= \boldsymbol{t}_{I^\alpha_1} \otimes \boldsymbol{t}_{I^\alpha_2} \otimes \cdots \otimes \boldsymbol{t}_{I^\alpha_r} \in \mathbb{R}^{d,r},
\]
where
\[
\boldsymbol{t}_i^{\otimes \alpha_i} := \underbrace{\boldsymbol{t}_i \otimes \cdots \otimes \boldsymbol{t}_i}_{\alpha_i \text{ times}}.
\]
Note that the vectors $\{\boldsymbol{t}_1, \ldots, \boldsymbol{t}_\ell\}$ are not required to be linearly independent, and the length $\ell$ may exceed the ambient dimension $d$. Moreover, the tensor product $t^{\alpha}$ depends on the ordering of the vectors; for example, $\boldsymbol{t}_1 \otimes \boldsymbol{t}_2 \neq \boldsymbol{t}_2 \otimes \boldsymbol{t}_1$ in general.

\subsection{Symmetric Tensors}
We introduce the $r$-th order symmetric tensor space over $\mathbb{R}^d$ as follows:
\[
\mathbb{S}^{d,r} := \left\{ \boldsymbol{\tau} = \tau_{i_1 \cdots i_r} \boldsymbol{t}_{i_1} \otimes \cdots \otimes \boldsymbol{t}_{i_r} \in \mathbb{R}^{d,r} : \tau_{i_{\sigma(1)} \cdots i_{\sigma(r)}} = \tau_{i_1 \cdots i_r} \text{ for any } \sigma \in \mathcal{G}^r \right\},
\]
where $\mathcal{G}^r$ denotes the permutation group of $(1, \ldots, r)$. We show that the symmetry is intrinsic and independent of the choice of basis.

\begin{lemma}
Under the same assumptions as in Lemma~\ref{lem:tauexpressexchbasis},  
let $\boldsymbol{\tau} \in \mathbb{S}^{d,r}$ be a symmetric tensor expressed as
\[
\boldsymbol{\tau} = \widetilde{\tau}_{i_1 \cdots i_r} \, \boldsymbol{s}_{i_1} \otimes \cdots \otimes \boldsymbol{s}_{i_r}.
\]
Then the components $\widetilde{\tau}_{i_1 \cdots i_r}$ remain symmetric under any permutation $\sigma \in \mathcal{G}^r$, i.e.,
\[
\widetilde{\tau}_{i_{\sigma(1)} \cdots i_{\sigma(r)}} = \widetilde{\tau}_{i_1 \cdots i_r}, \qquad \forall~\sigma \in \mathcal{G}^r, \; 1 \leq i_1, \ldots, i_r \leq d.
\]
\end{lemma}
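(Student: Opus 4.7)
\medskip

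\noindent\textbf{Proof proposal.} The plan is to express $\widetilde{\tau}_{i_1\cdots i_r}$ in terms of the known-symmetric components $\tau_{j_1\cdots j_r}$ via the change-of-coordinates formula~\eqref{eq:tauexpressexchbasis}, and then transfer any permutation applied to the outer $i$-indices onto the summation $j$-indices by relabeling, so that the intrinsic symmetry of $\tau_{j_1\cdots j_r}$ absorbs it. No new machinery beyond Lemma~\ref{lem:tauexpressexchbasis} and the commutativity of scalar multiplication is needed.

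Concretely, I would start from~\eqref{eq:tauexpressexchbasis} applied with permuted outer indices,
\[
\widetilde{\tau}_{i_{\sigma(1)}\cdots i_{\sigma(r)}} \;=\; \sum_{j_1,\ldots,j_r}\tau_{j_1\cdots j_r}\,\prod_{k=1}^r \bigl(\boldsymbol{t}_{j_k}\cdot \hat{\boldsymbol{s}}_{i_{\sigma(k)}}\bigr),
\]
and use commutativity of the scalar factors to regroup the product, pairing $\hat{\boldsymbol{s}}_{i_m}$ with $\boldsymbol{t}_{j_{\sigma^{-1}(m)}}$ for $m=1,\ldots,r$. A bijective change of summation variables $j'_m := j_{\sigma^{-1}(m)}$, equivalently $j_k = j'_{\sigma(k)}$, then turns the tensor factor into $\tau_{j'_{\sigma(1)}\cdots j'_{\sigma(r)}}$; invoking the hypothesis that $\boldsymbol{\tau}$ is symmetric in the $\boldsymbol{t}$-basis collapses this to $\tau_{j'_1\cdots j'_r}$, and a second reading of~\eqref{eq:tauexpressexchbasis} identifies the resulting sum as $\widetilde{\tau}_{i_1\cdots i_r}$.

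The only delicate aspect is index bookkeeping: carefully distinguishing $\sigma$ from $\sigma^{-1}$ when commuting the scalar factors and when relabeling dummies. This is mechanical rather than conceptual. If a cleaner, basis-free route is preferred, one can recast symmetry of $\boldsymbol{\tau}$ as the invariance
\[
\boldsymbol{\tau}:\bigl(\boldsymbol{v}_{\sigma(1)}\otimes\cdots\otimes\boldsymbol{v}_{\sigma(r)}\bigr)
= \boldsymbol{\tau}:\bigl(\boldsymbol{v}_1\otimes\cdots\otimes\boldsymbol{v}_r\bigr)
\qquad \forall\,\boldsymbol{v}_k\in\mathbb{R}^d,\ \sigma\in\mathcal{G}^r,
\]
which is manifestly intrinsic (independent of the chosen basis). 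Specializing $\boldsymbol{v}_m = \hat{\boldsymbol{s}}_{i_m}$ and applying the first equality in~\eqref{eq:tauexpressexchbasis}, namely $\widetilde{\tau}_{i_1\cdots i_r} = \boldsymbol{\tau}:(\hat{\boldsymbol{s}}_{i_1}\otimes\cdots\otimes\hat{\boldsymbol{s}}_{i_r})$, then yields the symmetry of $\widetilde{\tau}_{i_1\cdots i_r}$ at once, bypassing index manipulation entirely.
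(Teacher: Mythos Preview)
Your proposal is correct and follows essentially the same route as the paper: apply \eqref{eq:tauexpressexchbasis} with permuted outer indices, then push the permutation onto the dummy $j$-indices and absorb it using the symmetry of $\tau_{j_1\cdots j_r}$. The paper's version is slightly more streamlined in that it names the dummy indices $j_{\sigma(1)},\ldots,j_{\sigma(r)}$ from the outset, so the relabeling and the product-reordering collapse into a single step, but the content is identical to your first argument (and your ``basis-free'' alternative is just the same computation in different packaging).
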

\begin{proof}
By \eqref{eq:tauexpressexchbasis} and the symmetry of $\tau_{j_1 \cdots j_r}$,
\begin{align*}
\widetilde{\tau}_{i_{\sigma(1)} \cdots i_{\sigma(r)}} & = \boldsymbol{\tau} : \left( \hat{\boldsymbol{s}}_{i_{\sigma(1)}} \otimes \cdots \otimes \hat{\boldsymbol{s}}_{i_{\sigma(r)}} \right)= \tau_{j_{\sigma(1)} \cdots j_{\sigma(r)}} \, (\boldsymbol{t}_{j_{\sigma(1)}} \cdot \hat{\boldsymbol{s}}_{i_{\sigma(1)}}) \cdots (\boldsymbol{t}_{j_{\sigma(r)}} \cdot \hat{\boldsymbol{s}}_{i_{\sigma(r)}}) \\
&= \tau_{j_1 \cdots j_r} \, (\boldsymbol{t}_{j_1} \cdot \hat{\boldsymbol{s}}_{i_1}) \cdots (\boldsymbol{t}_{j_r} \cdot \hat{\boldsymbol{s}}_{i_r}) = \widetilde{\tau}_{i_1 \cdots i_r}.
\end{align*}
\end{proof}

The symmetrization operator ${\rm sym}(\boldsymbol{\tau})$ for a tensor $\boldsymbol{\tau} \in \mathbb{R}^{d,r}$ is defined as
\[
{\rm sym}(\boldsymbol{\tau})_{i_1 \cdots i_r} = \frac{1}{r!} \sum_{\sigma \in \mathcal{G}^r} \tau_{i_{\sigma(1)} \cdots i_{\sigma(r)}}, \quad 1 \leq i_1, \ldots, i_r \leq d.
\]

In particular, when $\boldsymbol{\tau} = t^{\otimes \alpha}$ for some $\alpha \in \mathbb{T}_r^{\ell-1}$, the symmetrization admits a simplified form. Let $I^\alpha \in \mathcal{I}_\ell^r$ be the increasing multi-index associated with $\alpha$, as defined in~\eqref{eq:alpha2i}. 
%Then, the symmetrization of the tensor $t^{\otimes \alpha}$ is given by
%\[
%\operatorname{sym}(t^{\otimes \alpha}) = \frac{1}{r!} \sum_{\sigma \in \mathcal{G}^r} \boldsymbol{t}_{I^\alpha_{\sigma(1)}} \otimes \cdots \otimes \boldsymbol{t}_{I^\alpha_{\sigma(r)}}.
%\]
For each permutation $\sigma \in \mathcal{G}^r$, define $(\sigma(I^\alpha))_i := I^\alpha_{\sigma(i)}$. This induces an equivalence relation $\sim^\alpha$ on $\mathcal{G}^r$:
\[
\sigma \sim^\alpha \sigma' \iff \sigma(I^\alpha) = \sigma'(I^\alpha).
\]
Let $\mathcal{G}^r_\alpha := \mathcal{G}^r / \sim^\alpha$ denote the set of equivalence classes under this relation. Each equivalence class has cardinality $\alpha!$, since permuting the $\alpha_i$ identical items in $\boldsymbol{t}_i^{\otimes \alpha_i}$ results in equivalent terms. This leads to the simplified expression
\begin{equation}
\label{eq:sym}
\operatorname{sym}(t^{\otimes \alpha}) = \frac{\alpha!}{r!} \sum_{\sigma \in \mathcal{G}^r_\alpha} \boldsymbol{t}_{I^\alpha_{\sigma(1)}} \otimes \cdots \otimes \boldsymbol{t}_{I^\alpha_{\sigma(r)}}.
\end{equation}

\begin{lemma}\label{lm:Sbasis}
Let $\{\boldsymbol{t}_1, \dots, \boldsymbol{t}_d\}$ be a basis of $\mathbb{R}^d$, and let $\{\hat{\boldsymbol{t}}_1, \dots, \hat{\boldsymbol{t}}_d\} \subset \mathbb{R}^d$ be its dual basis. Then the set $\{\operatorname{sym}({t}^{\otimes \alpha})\}_{\alpha \in \mathbb{T}_r^{d-1}}$ and $\{\operatorname{sym}(\hat{{t}}^{\otimes \alpha})\}_{\alpha \in \mathbb{T}_r^{d-1}}$ are scaled dual bases for the symmetric tensor space $\mathbb{S}^{d, r}$. In particular, the following duality relation holds:
\begin{equation}\label{eq:tensordual}
\operatorname{sym}(\hat{{t}}^{\otimes \alpha}) :
\operatorname{sym}({t}^{\otimes \beta}) = \frac{\alpha!}{r!} \delta_{\alpha, \beta}, \quad \alpha, \beta \in \mathbb{T}_r^{d-1}.
\end{equation}
\end{lemma}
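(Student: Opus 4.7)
The plan is to prove the duality relation \eqref{eq:tensordual} first, and then derive the basis property from it combined with a dimension count.

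First I would reduce the claim to a computation using the explicit formula \eqref{eq:sym}. Writing
\[
\operatorname{sym}(\hat{{t}}^{\otimes \alpha}) = \frac{\alpha!}{r!}\sum_{\sigma \in \mathcal{G}^r_\alpha} \hat{\boldsymbol{t}}_{I^\alpha_{\sigma(1)}} \otimes \cdots \otimes \hat{\boldsymbol{t}}_{I^\alpha_{\sigma(r)}},
\qquad
\operatorname{sym}({t}^{\otimes \beta}) = \frac{\beta!}{r!}\sum_{\tau \in \mathcal{G}^r_\beta} \boldsymbol{t}_{I^\beta_{\tau(1)}} \otimes \cdots \otimes \boldsymbol{t}_{I^\beta_{\tau(r)}},
\]
and using the elementary pairing \eqref{eq:innerproduct1} together with the dual basis relation $\hat{\boldsymbol{t}}_i\cdot\boldsymbol{t}_j=\delta_{i,j}$, I would obtain
\[
\operatorname{sym}(\hat{{t}}^{\otimes \alpha}) : \operatorname{sym}({t}^{\otimes \beta}) = \frac{\alpha!\,\beta!}{(r!)^2}\sum_{\sigma \in \mathcal{G}^r_\alpha}\sum_{\tau \in \mathcal{G}^r_\beta}\prod_{k=1}^r \delta_{I^\alpha_{\sigma(k)},\,I^\beta_{\tau(k)}}.
\]
The product of Kronecker deltas equals $1$ precisely when the tuples $\sigma(I^\alpha)$ and $\tau(I^\beta)$ coincide.

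The combinatorial bookkeeping is the heart of the argument. If $\alpha \neq \beta$, the multisets of entries of $I^\alpha$ and $I^\beta$ differ (since $\alpha_i$, resp.\ $\beta_i$, records the number of occurrences of $i$), so no permuted tuple can agree and the double sum vanishes. If $\alpha = \beta$, the condition $\sigma(I^\alpha)=\tau(I^\alpha)$ is exactly the defining condition $\sigma \sim^\alpha \tau$ of the equivalence relation used to form $\mathcal{G}^r_\alpha$; thus $\sigma$ and $\tau$ represent the same class, and for each fixed $\sigma \in \mathcal{G}^r_\alpha$ there is a unique matching $\tau$. Since $|\mathcal{G}^r_\alpha|=r!/\alpha!$, the double sum contributes $r!/\alpha!$ nonzero terms, yielding
\[
\operatorname{sym}(\hat{{t}}^{\otimes \alpha}) : \operatorname{sym}({t}^{\otimes \alpha}) = \frac{(\alpha!)^2}{(r!)^2}\cdot\frac{r!}{\alpha!} = \frac{\alpha!}{r!},
\]
which establishes \eqref{eq:tensordual}.

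Finally, for the basis property, I would argue that \eqref{eq:tensordual} immediately implies linear independence of each family: any relation $\sum_\beta c_\beta \operatorname{sym}(t^{\otimes\beta})=0$ paired with $\operatorname{sym}(\hat{t}^{\otimes\alpha})$ forces $c_\alpha=0$, and symmetrically for the dual family. Since $|\mathbb{T}_r^{d-1}|=\binom{d+r-1}{r}=\dim \mathbb{S}^{d,r}$ (the standard count of monomial symmetric tensors), both families have the right cardinality to span $\mathbb{S}^{d,r}$ and hence form bases. The main subtlety to handle carefully is the identification of the equivalence classes $\mathcal{G}^r_\alpha$ with the stabilizer cosets for the action on $I^\alpha$; once that is nailed down the counting $|\mathcal{G}^r_\alpha|=r!/\alpha!$ is automatic, and the rest is formal.
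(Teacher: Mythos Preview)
Your proof is correct and follows essentially the same route as the paper: both expand $\operatorname{sym}(\hat t^{\otimes\alpha}):\operatorname{sym}(t^{\otimes\beta})$ via \eqref{eq:sym} and \eqref{eq:innerproduct1}, reduce to the Kronecker-delta double sum, and do the same combinatorial count over $\mathcal G^r_\alpha$. The only difference is the order of the basis argument: the paper first shows spanning directly (sorting any index tuple $(i_1,\ldots,i_r)$ into some $I^\alpha$ so that every $\operatorname{sym}(\boldsymbol t_{i_1}\otimes\cdots\otimes\boldsymbol t_{i_r})$ already lies in the family) and then deduces independence from \eqref{eq:tensordual}, whereas you deduce independence from \eqref{eq:tensordual} and then invoke the known dimension $\dim\mathbb S^{d,r}=\binom{d+r-1}{r}$ to get spanning. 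Both are fine; note only that the paper treats that dimension formula as a \emph{byproduct} of this lemma rather than an input, so if you want to stay self-contained you might prefer the paper's direct spanning step.
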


\begin{proof}
It is clear that the set $\{\boldsymbol{t}_{i_1} \otimes \cdots \otimes \boldsymbol{t}_{i_r}\}_{1 \leq i_1, \dots, i_r \leq d}$ forms a basis for $\mathbb{R}^{d,r}$. Each basis element corresponds to an index array $(i_1, \ldots, i_r)$, and by sorting the indices in increasing order, we obtain a multi-index $\mathrm{sort}(i_1, \ldots, i_r) \in \mathcal{I}_r^{d-1}$, which corresponds to a unique $\alpha \in \mathbb{T}_r^{d-1}$ such that $\mathrm{sort}(i_1, \ldots, i_r) = I^\alpha$. Hence, we have
\[
\operatorname{sym}(\boldsymbol{t}_{i_1} \otimes \cdots \otimes \boldsymbol{t}_{i_r}) = \operatorname{sym}({t}^{\otimes \alpha}), \quad \text{whenever } \mathrm{sort}(i_1, \ldots, i_r) = I^\alpha.
\]
This shows that $\{\operatorname{sym}({t}^{\otimes \alpha})\}_{\alpha \in \mathbb{T}_r^{d-1}}$ spans the symmetric tensor space $\mathbb{S}^{d, r}$.

To prove the duality, we compute the inner product using the symmetrization definition \eqref{eq:sym} and the standard inner product \eqref{eq:innerproduct1}:
\[
\operatorname{sym}(\hat{{t}}^{\otimes \alpha}) :
\operatorname{sym}({t}^{\otimes \beta}) = \frac{\alpha!}{r!} \cdot \frac{\beta!}{r!} \sum_{\sigma \in \mathcal{G}_\alpha^r} \sum_{\sigma' \in \mathcal{G}_\beta^r} \prod_{i=1}^r \hat{\boldsymbol{t}}_{I^\alpha_{\sigma(i)}} \cdot \boldsymbol{t}_{I^\beta_{\sigma'(i)}}.
\]
Note that
\[
\hat{\boldsymbol{t}}_{I^\alpha_{\sigma(i)}} \cdot \boldsymbol{t}_{I^\beta_{\sigma'(i)}} = \delta_{I^\alpha_{\sigma(i)}, I^\beta_{\sigma'(i)}},
\]
so the product
\[
\prod_{i=1}^r \hat{\boldsymbol{t}}_{I^\alpha_{\sigma(i)}} \cdot \boldsymbol{t}_{I^\beta_{\sigma'(i)}}
\]
is equal to $1$ if and only if $\sigma(I^\alpha) = \sigma'(I^\beta)$, and $0$ otherwise.

If $\alpha \neq \beta$, there is no pair $(\sigma, \sigma')$ such that $\sigma(I^\alpha) = \sigma'(I^\beta)$, so the entire sum vanishes. If $\alpha = \beta$, then $I^\alpha = I^\beta$, and for each $\sigma \in \mathcal{G}_\alpha^r$, there exists a unique $\sigma' = \sigma \in \mathcal{G}_\alpha^r$ such that $\sigma(I^\alpha) = \sigma'(I^\beta)$. The sum becomes
\[
\sum_{\sigma \in \mathcal{G}_\alpha^r} \sum_{\sigma' \in \mathcal{G}_\alpha^r} \prod_{i=1}^r \delta_{I^\alpha_{\sigma(i)}, I^\alpha_{\sigma'(i)}} = \sum_{\sigma \in \mathcal{G}_\alpha^r} 1 = |\mathcal{G}_\alpha^r| = \frac{r!}{\alpha!}.
\]
Thus, we obtain
\[
\operatorname{sym}(\hat{{t}}^{\otimes \alpha}) :
\operatorname{sym}({t}^{\otimes \beta}) = \frac{\alpha!}{r!} \cdot \frac{\alpha!}{r!} \cdot \frac{r!}{\alpha!} = \frac{\alpha!}{r!}.
\]

This duality relation implies the linear independence of the set $\{\operatorname{sym}({t}^{\otimes \alpha})\}_{\alpha \in \mathbb{T}_r^{d-1}}$, and thus it forms a basis for the symmetric tensor space $\mathbb{S}^{d, r}$. The dual basis is given by $\left\{ \frac{r!}{\alpha!} \operatorname{sym}(\hat{{t}}^{\otimes \alpha}) \right\}_{\alpha \in \mathbb{T}_r^{d-1}}$.
\end{proof}

As a byproduct, we obtain the following identity for the dimension of the symmetric tensor space:
\[
\dim \mathbb{S}^{d, r} = |\mathbb{T}_r^{d-1}| = {d+r-1 \choose r} \ll d^r = \dim \mathbb{R}^{d, r}.
\]

\begin{corollary}\label{cor:symsym}
We have the identity
\begin{equation}\label{eq:symsym}
\boldsymbol{\tau} : \operatorname{sym}(\boldsymbol{\varsigma}) = \boldsymbol{\tau} : \boldsymbol{\varsigma}, \quad \forall~\boldsymbol{\tau} \in \mathbb{S}^{d,r}, \ \boldsymbol{\varsigma} \in \mathbb{R}^{d,r}.
\end{equation}
\end{corollary}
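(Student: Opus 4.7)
The plan is to expand both tensors in the canonical orthonormal basis $\{\boldsymbol{e}_i\}_{i=1}^d$, use the componentwise definition of $\operatorname{sym}$, and then exploit the intrinsic symmetry of $\boldsymbol{\tau}$ to collapse the average over $\mathcal{G}^r$ into a single copy of $\boldsymbol{\tau} : \boldsymbol{\varsigma}$.

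Concretely, I would first write $\boldsymbol{\tau} = \tau_{i_1 \cdots i_r}\, \boldsymbol{e}_{i_1} \otimes \cdots \otimes \boldsymbol{e}_{i_r}$ and $\boldsymbol{\varsigma} = \varsigma_{i_1 \cdots i_r}\, \boldsymbol{e}_{i_1} \otimes \cdots \otimes \boldsymbol{e}_{i_r}$ with summation over all $1 \le i_1, \ldots, i_r \le d$. By \eqref{eq:innerproduct1}, the contraction $:$ becomes componentwise multiplication. Substituting the definition of $\operatorname{sym}$ gives
\[
\boldsymbol{\tau} : \operatorname{sym}(\boldsymbol{\varsigma}) = \sum_{i_1,\ldots,i_r} \tau_{i_1 \cdots i_r} \cdot \frac{1}{r!} \sum_{\sigma \in \mathcal{G}^r} \varsigma_{i_{\sigma(1)} \cdots i_{\sigma(r)}}.
\]
The main step is then a relabeling of the summation indices: for each fixed $\sigma$, substitute $j_k := i_{\sigma(k)}$ so that the sum over the free indices can be recast as $\sum_{j_1,\ldots,j_r} \tau_{j_{\sigma^{-1}(1)} \cdots j_{\sigma^{-1}(r)}} \, \varsigma_{j_1 \cdots j_r}$. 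Because $\boldsymbol{\tau} \in \mathbb{S}^{d,r}$, the components $\tau_{j_{\sigma^{-1}(1)} \cdots j_{\sigma^{-1}(r)}}$ equal $\tau_{j_1 \cdots j_r}$, so the inner sum equals $\boldsymbol{\tau} : \boldsymbol{\varsigma}$ regardless of $\sigma$. Averaging over $|\mathcal{G}^r| = r!$ permutations yields the identity.

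The argument is essentially a bookkeeping exercise, and the only place where one must be careful is the index-relabeling step and the invocation of the intrinsic symmetry of $\boldsymbol{\tau}$, which is permitted by the earlier lemma showing that symmetry is basis-independent. No further tools beyond \eqref{eq:innerproduct1} and the definition of $\operatorname{sym}$ are needed, so I expect no genuine obstacle beyond presenting the relabeling cleanly.
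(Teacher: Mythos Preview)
Your argument is correct and essentially the same as the paper's: both expand $\operatorname{sym}(\boldsymbol{\varsigma})$ as an average over $\mathcal{G}^r$ and use the symmetry of $\boldsymbol{\tau}$ to collapse each permuted term to $\boldsymbol{\tau}:\boldsymbol{\varsigma}$. The only cosmetic difference is that the paper verifies the identity on basis tensors $\hat{\boldsymbol t}_{i_1}\otimes\cdots\otimes\hat{\boldsymbol t}_{i_r}$ and then extends by linearity, whereas you work directly with a general $\boldsymbol{\varsigma}$ via an index relabeling; both are equally valid.
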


\begin{proof}
Let $\boldsymbol{\tau} \in \mathbb{S}^{d,r}$ be a symmetric tensor. For $1 \leq i_1, \dots, i_r \leq d$, we compute:
\begin{align*}
\boldsymbol{\tau} : \operatorname{sym}(\hat{\boldsymbol{t}}_{i_1} \otimes \cdots \otimes \hat{\boldsymbol{t}}_{i_r}) &= \frac{1}{r!} \sum_{\sigma \in \mathcal{G}^r} \boldsymbol{\tau} : (\hat{\boldsymbol{t}}_{i_{\sigma(1)}} \otimes \cdots \otimes \hat{\boldsymbol{t}}_{i_{\sigma(r)}}) \\
&= \frac{1}{r!} \sum_{\sigma \in \mathcal{G}^r} \tau_{i_{\sigma(1)} \cdots i_{\sigma(r)}} = \tau_{i_1 \cdots i_r} = \boldsymbol{\tau} : (\hat{\boldsymbol{t}}_{i_1} \otimes \cdots \otimes \hat{\boldsymbol{t}}_{i_r}).
\end{align*}
Since $\{\hat{\boldsymbol{t}}_{i_1} \otimes \cdots \otimes \hat{\boldsymbol{t}}_{i_r}: 1 \leq i_1, \dots, i_r \leq d\}$ forms a basis for the tensor space $\mathbb{R}^{d,r}$, we conclude the identity in~\eqref{eq:symsym}.
\end{proof}

We refer to \cite{Wasserman2009} for more discussion on tensors and symmetric tensors.

\subsection{Derivatives}

Due to the commutativity of differentiation, the $r$-th order derivative of a function $v$ is a symmetric tensor. Let $\{\boldsymbol{e}_i\}_{i=1}^d$ be the canonical orthonormal basis of $\mathbb{R}^d$, corresponding to the coordinate system $x = (x_1, x_2, \ldots, x_d)$. For a function $v \in H^r(K)$, the $r$-th order derivative is defined as:
\[
    \nabla^r v = \frac{\partial^r v}{\partial x_{i_1} \cdots \partial x_{i_r}} \boldsymbol{e}_{i_1} \otimes \cdots \otimes \boldsymbol{e}_{i_r} \in \mathbb{S}^{d, r}, \quad 1 \leq i_1, \ldots, i_r \leq d.
\]
This can also be expressed using multi-index notation as:
\[
    \partial^{\alpha} v = \frac{\partial^r v}{\partial x_1^{\alpha_1} \cdots \partial x_d^{\alpha_d}} = \nabla^r v : e^{\otimes \alpha}, \quad \alpha \in \mathbb{T}_r^{d-1}.
\]

In general, for a set of linearly independent vectors $n = \{\boldsymbol{n}_1, \dots, \boldsymbol{n}_\ell\}$ in $\mathbb{R}^d$ and $\alpha \in \mathbb{T}^{\ell-1}_r$, we define:
\[
    \frac{\partial^r v}{\partial n^\alpha}  := \nabla^r v : n^{\otimes \alpha}.
\]

\begin{lemma}
    Let $n = \{\boldsymbol{n}_1, \dots, \boldsymbol{n}_d\}$ be a basis of $\mathbb{R}^d$, and $\hat n = \{\hat{\boldsymbol{n}}_1, \dots, \hat{\boldsymbol{n}}_d\}$ be its dual basis. For a smooth function $v$ and multi-index $\alpha$ of order $r$, the $r$-th order derivative of $v$ can be expressed as:
    \begin{equation}
        \nabla^r v = \sum_{\alpha \in \mathbb{T}_r^{d-1}} \frac{r!}{\alpha!} \operatorname{sym}(\hat{{n}}^{\otimes \alpha}) \frac{\partial^{r} v}{\partial n^{\alpha}},
        \label{eq:nablarv}
    \end{equation}
where $\frac{\partial^r v}{\partial n^\alpha} := \nabla^r v : n^{\otimes \alpha}$.
\end{lemma}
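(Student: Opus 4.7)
The plan is to expand $\nabla^r v$ in the basis of the symmetric tensor space $\mathbb{S}^{d,r}$ provided by Lemma~\ref{lm:Sbasis} and to identify the coefficients by pairing with the dual basis. The starting observation is that, by commutativity of partial derivatives, $\nabla^r v$ belongs to $\mathbb{S}^{d,r}$, so Lemma~\ref{lm:Sbasis} applied to the basis $\hat n$ yields that $\{\operatorname{sym}(\hat n^{\otimes \alpha})\}_{\alpha \in \mathbb{T}_r^{d-1}}$ is a basis of $\mathbb{S}^{d,r}$ whose dual basis (with respect to the $:$ inner product) is $\bigl\{\tfrac{r!}{\alpha!}\operatorname{sym}(n^{\otimes \alpha})\bigr\}_{\alpha \in \mathbb{T}_r^{d-1}}$, in view of the duality identity \eqref{eq:tensordual}.

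Writing the ansatz
\[
\nabla^r v = \sum_{\alpha \in \mathbb{T}_r^{d-1}} c_\alpha \operatorname{sym}(\hat n^{\otimes \alpha}),
\]
I would extract $c_\beta$ by taking the inner product with the dual basis element $\tfrac{r!}{\beta!}\operatorname{sym}(n^{\otimes \beta})$. The orthogonality relation \eqref{eq:tensordual} gives $c_\beta = \tfrac{r!}{\beta!}\,\nabla^r v : \operatorname{sym}(n^{\otimes \beta})$. Since $\nabla^r v \in \mathbb{S}^{d,r}$, Corollary~\ref{cor:symsym} allows the symmetrization to be dropped, so
\[
c_\beta = \tfrac{r!}{\beta!}\,\nabla^r v : n^{\otimes \beta} = \tfrac{r!}{\beta!}\,\frac{\partial^r v}{\partial n^{\beta}},
\]
by the very definition of $\partial^r v / \partial n^\beta$. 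Substituting back yields \eqref{eq:nablarv}.

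There is essentially no serious obstacle here: the identity is a direct structural consequence of the symmetric-tensor basis/dual-basis pairing in Lemma~\ref{lm:Sbasis} together with the symmetry of the derivative tensor exploited through Corollary~\ref{cor:symsym}. The only point that requires mild care is the bookkeeping of the scaling factor $\alpha!/r!$ in the duality, to make sure that the dual basis is correctly identified as $\tfrac{r!}{\alpha!}\operatorname{sym}(n^{\otimes \alpha})$ rather than $\operatorname{sym}(n^{\otimes \alpha})$ itself; this combinatorial factor is what ultimately produces the coefficient $r!/\alpha!$ appearing in \eqref{eq:nablarv}.
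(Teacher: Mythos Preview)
Your proposal is correct and follows essentially the same approach as the paper: expand $\nabla^r v$ in the basis $\{\operatorname{sym}(\hat n^{\otimes\alpha})\}$ of $\mathbb{S}^{d,r}$ via Lemma~\ref{lm:Sbasis}, then identify the coefficients by pairing with $n^{\otimes\beta}$ and using the duality relation~\eqref{eq:tensordual}. The only cosmetic difference is that the paper pairs directly with $n^{\otimes\alpha}$ (implicitly using the identity in Corollary~\ref{cor:symsym}), whereas you pair with $\tfrac{r!}{\beta!}\operatorname{sym}(n^{\otimes\beta})$ and invoke Corollary~\ref{cor:symsym} explicitly to drop the symmetrization.
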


\begin{proof}
By Lemma \ref{lm:Sbasis}, the set $\{\operatorname{sym}(\hat{{n}}^{\otimes \alpha})\}_{\alpha \in \mathbb{T}_r^{d-1}}$ forms a basis for the symmetric tensor space $\mathbb{S}^{d, r}$. Thus, we can express $\nabla^r v$ as a linear combination:
\[
    \nabla^r v = \sum_{\alpha \in \mathbb{T}_r^{d-1}} c_{\alpha} \operatorname{sym}(\hat{{n}}^{\otimes \alpha}),
\]
where the coefficients $c_{\alpha}$ can be computed by the duality relation~\eqref{eq:tensordual}. 

Using the fact that $\hat{\boldsymbol{n}}_i \cdot \boldsymbol{n}_j = \delta_{i,j}$, we have:
\[
    \hat{{n}}^{\otimes \alpha} : {n}^{\otimes \beta} = \delta_{\alpha, \beta},
    \quad \operatorname{sym}(\hat{{n}}^{\otimes \alpha}) : {n}^{\otimes \beta} = \operatorname{sym}(\hat{{n}}^{\otimes \alpha}) :  \operatorname{sym}({n}^{\otimes \beta}) = \frac{\alpha!}{r!} \delta_{\alpha, \beta}.
\]
Thus, we get:
\[
    \frac{\partial^{r} v}{\partial n^{\alpha}} = \nabla^r v : n^{\otimes \alpha} = c_{\alpha} \operatorname{sym}(\hat{{n}}^{\otimes \alpha}) : n^{\otimes \alpha} = c_{\alpha} \frac{\alpha!}{r!}.
\]
This gives the formula for the coefficients $c_{\alpha}$, as stated in~\eqref{eq:nablarv}.
\end{proof}

We present the derivatives of Bernstein polynomials in the following lemma. Similar formulae for higher-order directional derivatives of Bernstein polynomials can be found in \cite[(17.17)]{Farin2002}, \cite[Theorem 2.13]{LaiSchumaker2007}, and \cite[(3.16)]{ainsworth2011bernstein}. However, the formulation presented below provides a more complete and comprehensive version, extending the previous results to include the full range of derivative orders.

\begin{lemma}
Let $B^{\beta}$ be a Bernstein polynomial for $\beta \in \mathbb{T}_k^d$. For $0 \leq r \leq k$, the $r$-th order derivative of $B^{\beta}$ is given by:
\begin{equation}
\nabla^r B^{\beta} = \sum_{\alpha \in \mathbb{T}_r^d, \alpha \leq \beta} \frac{r! k!}{(k-r)! \alpha!} \operatorname{sym}((\nabla\lambda)^{\otimes \alpha}) B^{\beta - \alpha},
\label{eq:bern_grad}
\end{equation}
where $\nabla \lambda = (\nabla \lambda_0, \nabla \lambda_1, \dots, \nabla \lambda_d)$.
\end{lemma}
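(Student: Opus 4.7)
The main observation is that the barycentric coordinates $\lambda_0,\ldots,\lambda_d$ are affine functions of $x$, so each $\nabla\lambda_i$ is a constant vector and all higher derivatives of $\lambda_i$ vanish. Consequently, applying the chain rule to $B^{\beta}$ as a function $f(\lambda_0(x),\ldots,\lambda_d(x))$ with $f(\lambda):=\frac{k!}{\beta!}\lambda^{\beta}$ produces a clean sum with no Faà di Bruno correction terms. Concretely, I would first prove the identity
\[
\nabla^{r} f(\lambda) \;=\; \sum_{i_{1},\ldots,i_{r}=0}^{d} \frac{\partial^{r} f}{\partial \lambda_{i_{1}}\cdots\partial \lambda_{i_{r}}}\, \nabla\lambda_{i_{1}}\otimes\cdots\otimes\nabla\lambda_{i_{r}},
\]
by a straightforward induction on $r$ (the inductive step uses only $\nabla(\nabla\lambda_i)=0$ and that the scalar factor $\partial^{r}f/\partial\lambda_{i_{1}}\cdots\partial\lambda_{i_{r}}$ has gradient $\sum_{j}\partial^{r+1}f/(\partial\lambda_{j}\partial\lambda_{i_{1}}\cdots\partial\lambda_{i_{r}})\,\nabla\lambda_{j}$).

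Next I would regroup the sum by multi-indices $\alpha\in\mathbb{T}_{r}^{d}$: since mixed partials of $f$ commute, tuples $(i_{1},\ldots,i_{r})$ with the same unordered content contribute the same scalar $\partial^{\alpha}f$, and there are exactly $r!/\alpha!$ such tuples. Using the identity \eqref{eq:sym}, the inner sum of the corresponding $\nabla\lambda_{i_{1}}\otimes\cdots\otimes\nabla\lambda_{i_{r}}$ equals $\frac{r!}{\alpha!}\operatorname{sym}\bigl((\nabla\lambda)^{\otimes\alpha}\bigr)$, giving
\[
\nabla^{r} f(\lambda) \;=\; \sum_{\alpha\in\mathbb{T}_{r}^{d}} \frac{r!}{\alpha!}\,\partial^{\alpha}f\,\operatorname{sym}\bigl((\nabla\lambda)^{\otimes\alpha}\bigr).
\]
The final step is to evaluate $\partial^{\alpha}f$ for $f=\frac{k!}{\beta!}\lambda^{\beta}$. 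The multinomial differentiation rule gives $\partial^{\alpha}\lambda^{\beta}=\frac{\beta!}{(\beta-\alpha)!}\lambda^{\beta-\alpha}$ when $\alpha\le\beta$ and zero otherwise, so $\partial^{\alpha}f=\frac{k!}{(\beta-\alpha)!}\lambda^{\beta-\alpha}$. Rewriting $\lambda^{\beta-\alpha}=\frac{(\beta-\alpha)!}{(k-r)!}B^{\beta-\alpha}$ (since $|\beta-\alpha|=k-r$), the combinatorial factors telescope to $\frac{k!}{(k-r)!}$, yielding exactly \eqref{eq:bern_grad}.

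The only real obstacle is making the bookkeeping for the regrouping step rigorous: one must verify that the bijection between ordered tuples $(i_{1},\ldots,i_{r})$ and the increasing multi-index set $\mathcal{I}_{d+1}^{r}$ from \eqref{eq:alpha2i}, together with the counting $|\mathcal{G}^{r}/\!\sim^{\alpha}|=r!/\alpha!$ used in \eqref{eq:sym}, correctly produces the coefficient $r!/\alpha!$ in front of $\operatorname{sym}((\nabla\lambda)^{\otimes\alpha})$. Everything else reduces to Leibniz-type differentiation of monomials and elementary factorial identities.
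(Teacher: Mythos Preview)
Your proposal is correct and follows essentially the same route as the paper: both compute $\nabla^r B^\beta$ as a sum over ordered index tuples $(i_1,\ldots,i_r)$ via iterated differentiation (exploiting that each $\nabla\lambda_i$ is constant), then regroup by the multi-index $\alpha$ recording the tuple's content and invoke~\eqref{eq:sym} to collapse the inner sum to $\frac{r!}{\alpha!}\operatorname{sym}((\nabla\lambda)^{\otimes\alpha})$. Your presentation is in fact slightly more careful about the regrouping step than the paper's own proof, which passes through an intermediate expression that is notationally loose; the substance is the same.
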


\begin{proof}
Define $\epsilon_i \in \mathbb{T}_1^d$ as $\epsilon_i = (0, \ldots, 1, \ldots, 0)$, where the 1 appears in the $i$-th position. Then, the first-order gradient of $B^{\beta}$ can be written as:
\[
\nabla B^{\beta} = \sum_{\epsilon_i \in \mathbb{T}_1^d, \epsilon_i \leq \beta} \frac{k!}{(\beta - \epsilon_i)!} \lambda^{\beta - \epsilon_i} (\nabla \lambda_i),
\]
which leads to:
\[
\nabla^r B^{\beta} = \sum_{\epsilon_{i_1}, \dots, \epsilon_{i_r} \in \mathbb{T}_1^d, \alpha = \epsilon_{i_1} + \cdots + \epsilon_{i_r} \leq \beta} \frac{k!}{(\beta - \alpha)!} \lambda^{\beta - \alpha} (\nabla \lambda)^{\otimes \alpha}.
\]
Due to the symmetry of $\nabla^r B^{\beta}$, we can rewrite the above equation as:
\begin{align*}
\nabla^r B^{\beta} &= \sum_{\alpha \in \mathbb{N}^{0:d}, |\alpha| = r, \alpha \leq \beta} \frac{k!}{(\beta - \alpha)!} \lambda^{\beta - \alpha} \operatorname{sym}((\nabla \lambda)^{\otimes \alpha}) \\
&= \sum_{\alpha \in \mathbb{T}_r^d, \alpha \leq \beta} \frac{r! k!}{(k - r)! \alpha!} \operatorname{sym}((\nabla \lambda)^{\otimes \alpha}) B^{\beta - \alpha}.
\end{align*}
\end{proof}

\subsection{Derivative and Distance}
Recall that in \cite{ArnoldFalkWinther2009}, a smooth function $u$ is said to vanish to order $r$ on a sub-simplex $f$ if $\nabla^{\alpha} u|_f = 0$ for all $\alpha \in \mathbb{N}^{1:d}$, with $|\alpha| < r$. The following result establishes a relationship between the vanishing order of a Bernstein polynomial $B^{\beta}$ on $f$ and the distance $\dist(\beta, f)$.

\begin{lemma}\label{lm:derivative}
Let $T$ be a $d$-dimensional simplex, $f \in \Delta_{\ell}(\TT)$, and $\beta \in \mathbb{T}_k^d(T)$. Then
\[
(\nabla^r B^{\beta})|_f = 0 \quad \text{for} \quad 0 \leq r < \dist(\beta, f).
\]
\end{lemma}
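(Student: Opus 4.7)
The plan is to apply the explicit formula \eqref{eq:bern_grad} for $\nabla^r B^\beta$ and argue term-by-term that every summand vanishes on $f$ when $r<\dist(\beta,f)$. Since the symmetric tensors $\operatorname{sym}((\nabla\lambda)^{\otimes\alpha})$ are constant (the $\nabla\lambda_i$ are constant vectors on $T$), restriction to $f$ only affects the polynomial factor $B^{\beta-\alpha}$, giving
\[
(\nabla^r B^{\beta})|_f = \sum_{\substack{\alpha \in \mathbb{T}_r^d \\ \alpha \leq \beta}} \frac{r!\,k!}{(k-r)!\,\alpha!}\,\operatorname{sym}((\nabla\lambda)^{\otimes \alpha})\, B^{\beta - \alpha}\big|_f.
\]
So the problem reduces to identifying which multi-indices $\alpha$ produce a nonzero $B^{\beta-\alpha}|_f$.

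Next I would use that $\lambda_i|_f = 0$ for every $i\in f^*$, since $f$ is the geometric realization of the sub-simplex indexed by $f$, and is cut out by the vanishing of those barycentric coordinates (as recorded in the identity $f(T)=\{x\in T:\lambda_i(x)=0, i\in f^*\}$ from Section~\ref{sec:prelim}). Therefore $\lambda^{\beta-\alpha}|_f = 0$ whenever $(\beta-\alpha)_i > 0$ for some $i \in f^*$, i.e. unless $(\beta-\alpha)_{f^*} = 0$, which given the constraint $\alpha\leq\beta$ is equivalent to $\alpha_{f^*} = \beta_{f^*}$.

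The final step is a counting argument: any surviving $\alpha$ must satisfy
\[
r = |\alpha| \;\geq\; |\alpha_{f^*}| \;=\; |\beta_{f^*}| \;=\; \dist(\beta,f),
\]
which directly contradicts the hypothesis $r < \dist(\beta,f)$. Hence the index set of the sum is empty, and $(\nabla^r B^\beta)|_f = 0$. No real obstacle arises here; the only point requiring a bit of care is confirming that the restriction passes inside the sum (legitimate because the tensor coefficients are constant), and invoking the identity $\lambda_i|_f=0$ for $i\in f^*$ correctly.
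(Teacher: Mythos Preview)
Your proof is correct and follows essentially the same approach as the paper: both apply the expansion~\eqref{eq:bern_grad} and use that $\lambda_i|_f=0$ for $i\in f^*$ to show every term $B^{\beta-\alpha}|_f$ vanishes. The only cosmetic difference is that the paper argues directly via $|(\beta-\alpha)_{f^*}| = |\beta_{f^*}| - |\alpha_{f^*}| \geq |\beta_{f^*}| - r > 0$ for every $\alpha$ in the sum, whereas you first characterize the potentially surviving terms (those with $\alpha_{f^*}=\beta_{f^*}$) and then rule them out by the same count.
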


\begin{proof}
By assumption, $\dist(\beta, f) = |\beta_{f^*}| > r$. For each $\alpha \in \mathbb{T}_r^d$ such that $\alpha \leq \beta$, we have
\[
|(\beta - \alpha)_{f^*}| = |\beta_{f^*}| - |\alpha_{f^*}| \geq |\beta_{f^*}| - r > 0.
\]
Thus, $B^{\beta - \alpha}$ contains a factor of $\lambda_i$ for some $i \in f^*$, and consequently, $B^{\beta - \alpha}|_f = 0$ by the property that $\lambda_i|_f = 0$ for $i \in f^*$. Using the expression for $\nabla^r B^{\beta}$ from \eqref{eq:bern_grad}, the desired result follows.
\end{proof}

Consider the 1-dimensional reference simplex $T = [0,1]$ and the sub-simplex $f = \{ 0 \}$, the left vertex. Then, $\lambda_0 = 1 - x$ and $\lambda_1 = x$ and 
\[
\nabla^{r} \left( (1 - x)^{\alpha_f} x^{\alpha_{f^*}} \right) \big|_{x = 0} = 0 \quad \text{if} \quad |\alpha_{f^*}| > r.
\]
Lemma \ref{lm:derivative} is a generalization of this 1-D result to a simplex in multi-dimensions. 

\section{Local Frame, Degrees of Freedom, and Basis}
\label{sec:local_frame}
In this section, we present element-wise degrees of freedom (DoFs) for smooth finite elements, and find out its dual basis using Bernstein basis.

\subsection{Construction of Dual Bases}
We can give a linear indexing of $\mathbb T_k^d$ and consequently $\mathcal B_k$, e.g., the 
dictionary ordering:
\[
\alpha \to \sum_{i=1}^d{\alpha_i+\alpha_{i+1}+\cdots+\alpha_d+d-i \choose d+1-i}.
\]
With such indexing, we can treat $\mathcal B_k = (B^{\alpha})$ as a vector of basis polynomials. 
Denote by $(D_{\alpha, \beta})$ or $(D^{\alpha, \beta})$ the matrix using the linear indexing of the first subscript as the row index and the second as the column index. Then the transpose of $(D_{\alpha, \beta})$ is $(D_{\beta, \alpha})$.

\begin{lemma}\label{lm:constructdualbases}
Let $\mathcal{L} = \{\textnormal{\texttt{l}}^{\alpha}\mid \alpha \in \mathbb{T}_k^d\}$ be a basis of $\mathbb{P}_k(T)'$. Then, $(\textnormal{\texttt{l}}^{\alpha}(B^{\beta}))^{-1}\mathcal{L}$ is a basis of $\mathbb{P}_k(T)'$ dual to $\mathcal{B}_k$, and 
$(\textnormal{\texttt{l}}^{\beta}(B^{\alpha}))^{-1}\mathcal{B}_k$ is a basis of $\mathbb{P}_k(T)$ dual to $\mathcal{L}$.
\end{lemma}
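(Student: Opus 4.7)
The statement is at heart a linear-algebra fact about pairing bases with their duals, so the plan is to verify invertibility of the pairing matrix and then check duality by direct matrix multiplication, carefully respecting the row/column convention just introduced.

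First, I would set $M := (\texttt{l}^{\alpha}(B^{\beta}))$, so $M_{\alpha,\beta}=\texttt{l}^{\alpha}(B^{\beta})$ with $\alpha$ the row and $\beta$ the column index. The initial step is to show $M$ is invertible. If $M$ were singular, there would exist coefficients $(c_\alpha)$, not all zero, with $\sum_{\alpha} c_\alpha \, \texttt{l}^{\alpha}(B^{\beta}) = 0$ for every $\beta$. Since $\mathcal{B}_k$ spans $\mathbb{P}_k(T)$, this forces the functional $\sum_\alpha c_\alpha \texttt{l}^{\alpha}$ to annihilate every polynomial, hence to vanish in $\mathbb{P}_k(T)'$, contradicting the linear independence of $\mathcal{L}$. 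So $M^{-1}$ exists.

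For the first claim, I define $\tilde{\texttt{l}}^{\alpha} := \sum_{\gamma}(M^{-1})_{\alpha,\gamma}\,\texttt{l}^{\gamma}$, which is exactly what the notation $(\texttt{l}^{\alpha}(B^{\beta}))^{-1}\mathcal{L}$ means once we view $\mathcal{L}$ as a column vector of functionals. A one-line computation
\[
\tilde{\texttt{l}}^{\alpha}(B^{\beta}) \;=\; \sum_{\gamma}(M^{-1})_{\alpha,\gamma}\,\texttt{l}^{\gamma}(B^{\beta}) \;=\; \sum_{\gamma}(M^{-1})_{\alpha,\gamma}\,M_{\gamma,\beta} \;=\; (M^{-1}M)_{\alpha,\beta} \;=\; \delta_{\alpha,\beta}
\]
then yields the required duality, and linear independence of $\tilde{\mathcal{L}}$ follows because $M^{-1}$ is invertible, so $\tilde{\mathcal{L}}$ is a basis of $\mathbb{P}_k(T)'$ dual to $\mathcal{B}_k$.

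For the second claim, the key observation is bookkeeping: by the convention recalled just before the lemma, the matrix $(\texttt{l}^{\beta}(B^{\alpha}))$ has first index $\beta$ as its row and second index $\alpha$ as its column, so it coincides with $M^{T}$, and its inverse is $(M^{-1})^{T}$. Setting $\tilde{B}^{\alpha} := \sum_{\gamma}\bigl((M^{T})^{-1}\bigr)_{\alpha,\gamma}\,B^{\gamma}$ and using $\bigl((M^{T})^{-1}\bigr)_{\alpha,\gamma} = (M^{-1})_{\gamma,\alpha}$, I would compute
\[
\texttt{l}^{\beta}\bigl(\tilde{B}^{\alpha}\bigr) \;=\; \sum_{\gamma}(M^{-1})_{\gamma,\alpha}\,\texttt{l}^{\beta}(B^{\gamma}) \;=\; \sum_{\gamma}M_{\beta,\gamma}\,(M^{-1})_{\gamma,\alpha} \;=\; (MM^{-1})_{\beta,\alpha} \;=\; \delta_{\beta,\alpha},
\]
and again $\tilde{\mathcal{B}}$ is a basis of $\mathbb{P}_k(T)$ because $(M^T)^{-1}$ is invertible. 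There is no real obstacle here; the only place to slip is the transpose step in the second part, so I would explicitly flag the index convention to keep $M$ versus $M^{T}$ straight.
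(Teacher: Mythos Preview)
Your proof is correct and follows essentially the same linear-algebra route as the paper. The paper postulates the existence of the dual basis, writes it as $\texttt{b}^{\gamma}=\sum_{\alpha}C_{\gamma,\alpha}\texttt{l}^{\alpha}$, and reads off $C=(\texttt{l}^{\alpha}(B^{\beta}))^{-1}$ from the duality relation, whereas you go the other direction---define the candidate via $M^{-1}$ and verify duality---and you additionally supply an explicit invertibility argument for $M$ that the paper leaves implicit; the two arguments are otherwise identical, including the transpose bookkeeping in the second part.
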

\begin{proof}
Let $\{\texttt{b}^{\alpha}, \alpha \in \mathbb{T}_k^d\}$ be the basis of $\mathbb{P}_k(T)'$ dual to $\mathcal{B}_k$. 
Assume $\texttt{b}^{\gamma} = \sum_{\alpha \in \mathbb{T}_k^d} C_{\gamma, \alpha} \texttt{l}^{\alpha}$ for $\gamma \in \mathbb{T}_k^d$, where $C_{\gamma, \alpha} \in \mathbb{R}$. By the duality property, we have:
\[
\sum_{\alpha \in \mathbb{T}_k^d} C_{\gamma, \alpha} \texttt{l}^{\alpha}(B^{\beta}) = \delta_{\gamma, \beta}, \quad \forall~ \gamma, \beta \in \mathbb{T}_k^d.
\]
This implies that $(\texttt{l}^{\alpha}(B^{\beta}))^{-1}\mathcal{L}$ is dual to $\mathcal{B}_k$.

For the second part, let $\{\texttt{G}^{\gamma}, \gamma \in \mathbb{T}_k^d\}$ be the basis of $\mathbb{P}_k(T)$ dual to $\mathcal{L}$. Assume $\texttt{G}^{\gamma} = \sum_{\beta \in \mathbb{T}_k^d} C_{\gamma, \beta} B^{\beta}$ for $\gamma \in \mathbb{T}_k^d$, where $C_{\gamma, \beta} \in \mathbb{R}$. By the duality property, we get:
\[
\sum_{\beta \in \mathbb{T}_k^d} C_{\gamma, \beta} \texttt{l}^{\alpha}(B^{\beta}) = \delta_{\gamma, \alpha}, \quad \forall~ \gamma, \alpha \in \mathbb{T}_k^d.
\]
Thus, $((\texttt{l}^{\alpha}(B^{\beta}))^{\intercal})^{-1} \mathcal{B}_k$ is dual to $\mathcal{L}$.
\end{proof}

For instance, we can use DoFs such as $\{\int_f u \lambda^{\alpha_f}\dd s\}$ for the Lagrange finite elements. The corresponding DoF-Basis matrix $(\texttt{l}^{\alpha}(B^{\beta}))_{\alpha,\beta \in \mathbb{T}_k^d}$ is block lower triangular~\cite{Chen;Huang:2021Geometric}. For lattice points at vertices, i.e., $\alpha \in \mathbb{T}_k^{0}(\texttt{v})$, the DoF $u(\texttt{v})$ corresponds to the function value at the vertex. For sub-simplices of dimension $\ell \geq 1$, inverting a Gram matrix to obtain the basis dual to the Bernstein basis.

\begin{remark}\rm
A popular set of DoFs is the function value at all lattice points, i.e., $\texttt{l}^{\alpha}(v) = v(x_{\alpha})$. The corresponding dual basis is known as the Lagrange basis~\cite{nicolaides1972class}:
$$
L_{\alpha}(x) = 
\frac{1}{\alpha!} 
\prod_{i=0}^{d}\prod_{j=0}^{\alpha_i - 1} (k\lambda_i(x) - j), \quad 
\alpha \in \mathbb{T}_k^d.
$$
It is straightforward to verify the duality between the basis and the DoFs:
$$
\texttt{l}^{\alpha}(L_{\beta}) = L_{\beta}(x_{\alpha}) = \delta_{\alpha, \beta}, \quad \alpha, \beta \in \mathbb{T}_k^d.
$$
However, no simple formula for $\nabla^r L_{\alpha}$ is available. Therefore, we will retain the Bernstein basis function and modify the dual basis accordingly.
\end{remark}

\subsection{Dual Basis of Bernstein Basis}
The dual basis of $\mathcal{B}_k$ is a set of linear functionals $\mathcal{B}_k' := \{\texttt{b}^{\alpha}, \alpha \in \mathbb{T}_k^d\} \subset \mathbb{P}_k(T)'$ such that:
\begin{equation}\label{eq:bdual}
\texttt{b}^{\alpha}(B^{\beta}) := \langle \texttt{b}^{\alpha}, B^{\beta} \rangle = \delta_{\alpha, \beta},
\end{equation}
where $\langle \cdot, \cdot \rangle$ is the duality pairing. 

Similarly, the dual basis of $\mathcal{B}_k(f)$ on the sub-simplex $f$ is defined as:
\[
\begin{aligned}
\mathcal{B}_k(f)' &= \{\texttt{b}^{\alpha_f}, \alpha_f \in \mathbb{T}_k^{\ell}(f)\} \in \mathbb{P}_k(f)' \quad \text{such that} \\
\quad
\texttt{b}^{\alpha_f}(B^{\beta_f}) &:= \langle \texttt{b}^{\alpha_f}, B^{\beta_f} \rangle = \delta_{\alpha_f, \beta_f}, \quad \forall~\alpha_f, \beta_f \in \mathbb{T}_k^\ell(f).
\end{aligned}
\]
The functional $\texttt{b}^{\alpha_f}$ can be extended to $\mathbb P_k(T)'$ by the natural restriction of function value, i.e.
$$
\texttt{b}^{\alpha_f}(u) = \langle \texttt{b}^{\alpha_f}, u\mid _f \rangle, \quad u\in \mathbb P_k(T). 
$$
When $\ell = 0$, i.e., at a vertex $\texttt{v}$,     
$$
\texttt{b}^{\alpha_{\texttt{v}}}(u) =    \langle \ttt{b}^{\alpha_{\ttt{v}}}, u\mid_\ttt{v} \rangle = u(\ttt{v}), \quad \forall~u \in \mathbb{P}_{|\alpha_\ttt{v}|}(T).
$$

By Lemma \ref{lm:constructdualbases}, we can find an explicit formula for $\texttt{b}^{\alpha}$ by inverting the DoF-Basis matrix for some DoFs. For example, for a given $f\in \Delta_{\ell}(T), \ell \geq 1$, considering the DoFs
$$
\textnormal{\texttt{l}}^{\alpha}(\cdot):= \int_{f} B^{\alpha_f}(\cdot) \dd s,
$$
then the dual basis $(\texttt{b}^{\alpha_f}) = ( \int_f B^{\alpha_f}B^{\beta_f}\dd s)^{-1}(\textnormal{\texttt{l}}^{\alpha})$. By \eqref{eq:int-bernstein}, we have an explicit formula on the Gram matrix 
$$
\int_f B^{\alpha_f}B^{\beta_f}\dd s = \frac{(\alpha_f+\beta_f)!k!k!\ell !}{\alpha_f!\beta_f!(2k+\ell)!}|f|, \quad \alpha_f, \beta_f\in \mathbb T^{\ell}_k,
$$
but it is hard to write its inverse.

We do not necessarily need to form the explicit formula for $\texttt{b}^{\alpha}$; the important property is the duality relation in \eqref{eq:bdual}, which guarantees the functional behavior required for our finite element construction.

\subsection{Normal Basis}
For a sub-simplex \( f \in \Delta_{\ell}(T) \), choose \( \ell \) linearly independent (not necessarily orthogonal) tangential vectors \( \{\boldsymbol{t}^1_f, \dots, \boldsymbol{t}^{\ell}_f\} \) for \( f \) and \( d - \ell \) linearly independent (not necessarily orthogonal) normal vectors \( \{\boldsymbol{n}^1_f, \dots, \boldsymbol{n}^{d-\ell}_f\} \) for \( f \). The set of \( d \) vectors \( \{\boldsymbol{t}^1_f, \dots, \boldsymbol{t}^{\ell}_f, \boldsymbol{n}^1_f, \dots, \boldsymbol{n}^{d-\ell}_f\} \) forms a basis for \( \mathbb{R}^d \). The tangent and normal planes of \( f \) are defined as:
\[
\mathscr{T}^f := \operatorname{span} \{\boldsymbol{t}^i_f \mid i = 1, \ldots, \ell\}, \quad
\mathscr{N}^f := \operatorname{span} \{\boldsymbol{n}^i_f \mid i = 1, \ldots, d - \ell\}.
\]

We now introduce two bases for its normal plane \( \mathscr{N}^f \). Recall that \( F_i \) represents the \((d - 1)\)-dimensional face opposite the \( i \)-th vertex. Hence, \( f \subseteq F_i \) for \( i \in f^* \). The vector \( \nabla \lambda_i \in \mathbb{R}^d \) is normal to \( F_i \) and thus normal to \( f \subseteq F_i \) for \( i \in f^* \). Let its projection onto \( \mathscr{T}^f \) be denoted as \( \nabla_f \lambda_i \), which is also called the surface or tangential gradient.

For $f \in \Delta_{\ell}(T)$, $0\leq \ell \leq d-1$, and for $i \in f^*$, let $f \cup \{i\}$ represent the $(\ell+1)$-dimensional face in $\Delta_{\ell+1}(T)$ with vertices $\{i, f(0), \dots, f(\ell)\}$. The tangential gradient $\nabla_{f \cup \{i\}} \lambda_i$ is normal to $f$ but tangential to $f \cup \{i\}$. 

\begin{figure}[htbp]
\subfigure{
\begin{minipage}[t]{0.475\linewidth}
\centering
\includegraphics[width=6cm]{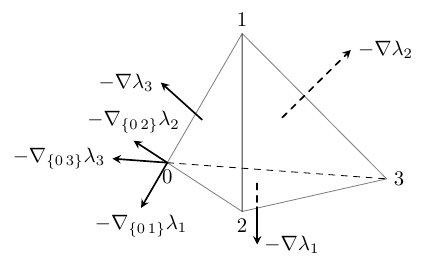}
%\caption{\raggedleft Face normal basis $\{\nabla_{\{0\, 1\}} \lambda_1, \nabla_{\{0\, 2\}} \lambda_2, \nabla_{\{0\, 3\}} \lambda_3\}$ and tangential-normal basis $\{\nabla \lambda_1, \nabla \lambda_2, \nabla \lambda_3\}$ at vertex $\texttt{v}_0$.}
\end{minipage}}%%
\quad
\subfigure
%[Face normal basis $\{\nabla \lambda_2, \nabla \lambda_3\}$ and tangential-normal basis $\{ \nabla_{\{0, 1, 2\}} \lambda_2, \nabla_{\{0, 1, 3\}} \lambda_3\}$ for $\mathscr{N}^f$ with $f = \{0, 1\}$.]
{
\begin{minipage}[t]{0.44\linewidth}
\centering
\includegraphics*[width=5cm]{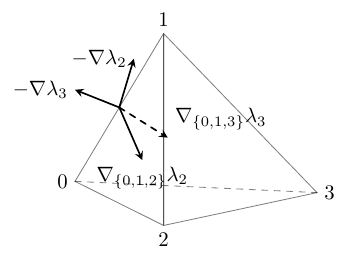}
%\caption*{\raggedright Face normal basis $\{\nabla \lambda_2, \nabla \lambda_3\}$ and tangential-normal basis $\{ \nabla_{\{0, 1, 2\}} \lambda_2, \nabla_{\{0, 1, 3\}} \lambda_3\}$ for $\mathscr{N}^f$ with $f = \{0, 1\}$.}
\end{minipage}}
\caption{Face normal basis and tangential-normal basis at a vertex and an edge.}
\label{fig:normalbasis}
\end{figure}

We claim that these two bases of $\mathscr{N}^f$ are dual to each other with appropriate scaling:
\begin{align*}
&\text{Face normal basis: } &\{ \nabla \lambda_i \mid i \in f^* \}, \\
&\text{Tangential-normal basis: } &\{ \nabla_{f \cup \{i\}} \lambda_i \mid i \in f^* \}.
\end{align*}

\begin{lemma}\label{lem:normaldual}
For $f \in \Delta_{\ell}(T)$, the rescaled tangential-normal basis 
$$
\left\{ \frac{\nabla_{f \cup \{i\}} \lambda_i}{|\nabla_{f \cup \{i\}} \lambda_i |^2} \mid i \in f^* \right\}
$$ 
of $\mathscr{N}^f$ is dual to the face normal basis $\{\nabla \lambda_i \mid i \in f^*\}$.
\end{lemma}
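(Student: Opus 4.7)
The plan is to verify the duality pairing
\[
\frac{\nabla_{f\cup\{i\}}\lambda_i}{|\nabla_{f\cup\{i\}}\lambda_i|^2}\cdot\nabla\lambda_j=\delta_{i,j},\qquad i,j\in f^*,
\]
by splitting into the diagonal and off-diagonal cases. The two cases each rely on a single geometric observation, and no tensor or symmetrization machinery is needed here.

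For the diagonal case $i=j$, I would unpack the definition of the surface gradient: $\nabla_{f\cup\{i\}}\lambda_i$ is the orthogonal projection of $\nabla\lambda_i$ onto the tangent plane $\mathscr{T}^{f\cup\{i\}}$, so we have the decomposition $\nabla\lambda_i=\nabla_{f\cup\{i\}}\lambda_i+w$ with $w\perp\mathscr{T}^{f\cup\{i\}}$. Taking the inner product with $\nabla_{f\cup\{i\}}\lambda_i$ (which lies in $\mathscr{T}^{f\cup\{i\}}$) gives $\nabla_{f\cup\{i\}}\lambda_i\cdot\nabla\lambda_i=|\nabla_{f\cup\{i\}}\lambda_i|^2$, which, after the rescaling, yields $1$. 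I should also note in passing that $\nabla_{f\cup\{i\}}\lambda_i\neq 0$: since $\lambda_i$ vanishes at the vertices of $f$ (as $i\in f^*$) but equals $1$ at the vertex $i\in f\cup\{i\}$, the affine function $\lambda_i$ is non-constant on $f\cup\{i\}$, so its tangential gradient cannot vanish, justifying the rescaling.

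For the off-diagonal case $i\neq j$ with $i,j\in f^*$, the crux is the simple combinatorial inclusion $f\cup\{i\}\subseteq F_j$. Indeed, the vertices of $f\cup\{i\}$ are the vertices of $f$ together with vertex $i$; since $j\notin f$ (because $j\in f^*$) and $j\neq i$, none of these vertices equals $j$, so $f\cup\{i\}\subseteq F_j$. Consequently $\mathscr{T}^{f\cup\{i\}}\subseteq\mathscr{T}^{F_j}$, and since $\nabla\lambda_j$ is normal to $F_j$, it is orthogonal to every vector in $\mathscr{T}^{F_j}$, hence to $\nabla_{f\cup\{i\}}\lambda_i\in\mathscr{T}^{f\cup\{i\}}$. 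This gives $0$, as required.

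The main (minor) obstacle is spotting the inclusion $f\cup\{i\}\subseteq F_j$ in the off-diagonal case; once it is recognized, the proof reduces to a two-line projection computation. I would present the argument as a short case split, perhaps preceded by a one-sentence reminder that $\nabla_{f\cup\{i\}}\lambda_i$ is by definition the orthogonal projection of $\nabla\lambda_i$ onto $\mathscr{T}^{f\cup\{i\}}$, since this definition is used in both cases.
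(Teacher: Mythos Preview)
Your proof is correct and follows essentially the same approach as the paper: both verify the diagonal case via the projection identity $\nabla_{f\cup\{i\}}\lambda_i\cdot\nabla\lambda_i=|\nabla_{f\cup\{i\}}\lambda_i|^2$ and the off-diagonal case via the inclusion $f\cup\{i\}\subseteq F_j$. Your additional justification that $\nabla_{f\cup\{i\}}\lambda_i\neq 0$ is a nice touch that the paper leaves implicit.
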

\begin{proof}
Clearly, $\nabla_{f \cup \{i\}} \lambda_i, \nabla\lambda_i \in \mathscr{N}^f$ for $i \in f^*$. As the orthogonal projection of $\nabla \lambda_i$ to the face $f \cup \{i\}$, 
$$
\nabla_{f \cup \{i\}} \lambda_i \cdot \nabla\lambda_i = |\nabla_{f \cup \{i\}} \lambda_i |^2.
$$
It suffices to prove
\[
\nabla_{f \cup \{i\}} \lambda_i \cdot \nabla\lambda_j = 0 \quad \text{for } i, j \in f^*, \, i \neq j,
\]
which follows from the fact that $f \cup \{i\} \subseteq F_j$ and $\nabla_{f \cup \{i\}} \lambda_i \in \mathscr{T}^{f \cup \{i\}}$. 
\end{proof}

\subsection{Integral Form DoFs}
%\XH{Notation $\boldsymbol{n}_f^i$ and $\boldsymbol{n}_f^i$ are not unified in the paper.}
We recall the degrees of freedom (DoFs) defined in \cite{HuLinWu2024,chen_geometric_2021,Chen;Huang:2022FEMcomplex3D}. For each sub-simplex $f \in \Delta_{\ell}(T)$, we choose a normal basis $n_f = \{\boldsymbol{n}_f^i, i = 1, \dots, d - \ell\}$ for its normal plane $\mathscr{N}^f$ to define the normal derivatives 
$\frac{\partial^{|\beta|} u}{\partial n_f^{\beta}}, \beta \in \mathbb{N}^{1:d-\ell}.$

\begin{theorem}\label{th:applocalPrCm}
Given an integer $m \geq 0$, let $\bs{r} = (r_0, r_1, \dots, r_d)$ satisfy
$$
r_d = 0, \quad r_{d-1} = m, \quad r_{\ell} \geq 2r_{\ell+1} \quad \textrm{for } \ell = d - 2, \dots, 0.
$$
Assume $k \geq 2r_0 + 1 \geq 2^d m + 1$. Then the shape function space $\mathbb{P}_k(T)$ is uniquely determined by the following DoFs:
\begin{align*}
% \label{eq:C1nd0}
D^{\alpha} u (\textnormal{\texttt{v}}) & \quad \alpha \in \mathbb{N}^{1:d}, \, |\alpha| \leq r_0, \, \textnormal{\texttt{v}} \in \Delta_0(T), \\
% \label{eq:C1nd2}
\int_f \frac{\partial^{|\beta|} u}{\partial n_f^{\beta}}  \, \lambda_f^{\alpha_f} \, \mathrm{d}s & \quad \alpha \in S_{\ell}(f), \, |\alpha_f| = k - s, \, \beta \in \mathbb{N}^{1:d-\ell}, \, |\beta| = s, \\
& \quad f \in \Delta_{\ell}(T), \, \ell = 1, \dots, d - 1, \, s = 0, \dots, r_{\ell}, \notag \\
% \label{eq:C1nd3}
\int_T u \, \lambda^{\alpha} \, \mathrm{d}x & \quad \alpha \in S_d(T).
\end{align*}
\end{theorem}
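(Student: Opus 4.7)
The plan is a classical two-part unisolvence argument: count the DoFs to match $\dim \mathbb P_k(T) = |\mathbb T_k^d|$, then show that any $u \in \mathbb P_k(T)$ annihilated by every DoF vanishes identically. Both parts are driven by the lattice decomposition of Theorem \ref{th:appdecT} and Lemma \ref{lm:derivative}, which ties lattice distance to the vanishing order of Bernstein polynomials.

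For the counting step, I would construct, for each $f \in \Delta_\ell(T)$, a bijection between the DoFs attached to $f$ and the points of $S_\ell(f)$. For $1 \leq \ell \leq d-1$, pairs $(\alpha_f, \beta) \in \mathbb N^{0:\ell} \times \mathbb N^{1:d-\ell}$ with $|\alpha_f| + |\beta| = k$ and $|\beta| \leq r_\ell$ correspond to $\alpha = \alpha_f \oplus \beta \in S_\ell(f)$; the vertex and cell cases fit the same scheme. Summing over $f$ via $\mathbb T_k^d = \Oplus_\ell \Oplus_{f \in \Delta_\ell(T)} S_\ell(f)$ recovers the required dimension.

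For the vanishing step, I would induct on $\ell$, proving the stronger claim that $u$ vanishes to order $r_\ell+1$ on each $f \in \Delta_\ell(T)$ for $\ell < d$. The base case $\ell = 0$ is immediate from the vertex DoFs. For the inductive step at $f \in \Delta_\ell(T)$, fix $\beta \in \mathbb N^{1:d-\ell}$ with $s := |\beta| \leq r_\ell$ and set $g := (\partial^s u / \partial n_f^\beta)|_f \in \mathbb P_{k-s}(f)$. On each codim-one sub-face $e = f \setminus \{i\}$ of dimension $\ell - 1$, the inductive hypothesis gives $u$ vanishing to order $r_{\ell-1}+1$ on $e$, forcing $g$ to vanish on $e$ to order $r_{\ell-1} - s + 1 \geq r_\ell + 1$ (using $r_{\ell-1} \geq 2 r_\ell$). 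Hence $g = b\cdot w$ where $b := \prod_{i \in f} \lambda_i^{r_{\ell-1} - s + 1}$ is a boundary bubble and $w \in \mathbb P_{k - s - (\ell+1)(r_{\ell-1} - s + 1)}(f)$. The membership condition $\alpha \in S_\ell(f)$ with $|\alpha_{f^*}| = s$ precisely forces $\alpha_i \geq r_{\ell-1} - s + 1$ for each $i \in f$; with $\tilde\alpha_f := \alpha_f - (r_{\ell-1} - s + 1)\mathbf 1$, the DoF rewrites as $\int_f b^2\, w\, \lambda_f^{\tilde\alpha_f}\,ds = 0$. As $\tilde\alpha_f$ ranges over all multi-indices of total degree $\deg w$, choosing the combination reproducing $w$ yields $\int_f b^2 w^2\,ds = 0$, so positivity of $b$ on the interior of $f$ forces $w = 0$, and hence $g = 0$. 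Varying $\beta$ kills all normal derivatives of $u$ of order $\leq r_\ell$ on $f$, and combining with tangential differentiation then yields $u$ vanishing to order $r_\ell + 1$ on $f$. At $\ell = d$, the accumulated boundary vanishing confines $u$ to the subspace $V \subset \mathbb P_k(T)$ whose Bernstein expansion is supported on $S_d(T)$, so $\dim V = |S_d(T)|$, and the interior DoFs $\int_T u\,\lambda^\alpha\,dx$, $\alpha \in S_d(T)$, against the positive Bernstein Gram pairing \eqref{eq:int-bernstein} form an invertible square system, yielding $u = 0$.

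I expect the main obstacle to be the bubble-function moment step: checking that the admissible range of $\tilde\alpha_f$ matches exactly the polynomial degree of $w$ at every level, and that the growth condition $r_{\ell-1} \geq 2 r_\ell$ is used tightly so $r_{\ell-1} - s + 1 \geq r_\ell + 1$ holds whenever $s \leq r_\ell$. The combinatorial alignment between $\alpha_f$, $\beta$, and the membership condition for $S_\ell(f)$ must be traced carefully, but once confirmed, the positivity of $b$ on the interior of $f$ together with the spanning of the Bernstein monomials closes the argument.
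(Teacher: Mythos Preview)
Your overall strategy---dimension count via the lattice decomposition, then an inductive vanishing argument layer by layer---is sound and is the classical route. The paper itself does not reprove Theorem~\ref{th:applocalPrCm} but only cites earlier work, summarizing the mechanism as ``the DoF--Basis matrix is block lower triangular.'' So you are filling in what the paper only sketches, by a different method.

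However, there is a genuine gap in your bubble-moment step. You assert that membership $\alpha \in S_\ell(f)$ with $|\alpha_{f^*}| = s$ \emph{precisely} forces $\alpha_i \geq r_{\ell-1} - s + 1$ for each $i \in f$, so that $\tilde\alpha_f$ ranges over all of $\mathbb T_{\deg w}^\ell$. This fails for $\ell \geq 2$: the definition of $S_\ell(f)$ excludes the tubes $D(e, r_j)$ for \emph{every} $j < \ell$, not only $j = \ell - 1$, and the lower-dimensional constraints can bite. Concretely, take $d = 3$, $\bs r = (8, 4, 2, 0)$, $k = 17$, $f \in \Delta_2(T)$, and $s = 2$. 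Then $r_{\ell-1} - s + 1 = 3$, and $\alpha_f = (9, 3, 3)$ satisfies $(\alpha_f)_i \geq 3$ and $|\alpha_f| = 15 = k-s$, yet for the vertex $\texttt v = f(0)$ one has $\dist(\alpha, \texttt v) = 17 - 9 = 8 = r_0$, so $\alpha \notin S_2(f)$. Hence $\tilde\alpha_f = (6, 0, 0)$ is unavailable, and your test monomials $\lambda_f^{\tilde\alpha_f}$ do not span $\mathbb P_6(f)$.

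The repair is to use your \emph{full} inductive hypothesis, not just the $j = \ell-1$ layer. You already know $u$ vanishes to order $r_j + 1$ on every $e \in \Delta_j(f)$ for each $j < \ell$, so $g$ vanishes to order $r_j - s + 1$ there; in the example this forces $w$ to vanish at each vertex of $f$, cutting the admissible $w$ down to a $25$-dimensional space that exactly matches the $25$ available $\tilde\alpha_f$. In general the shifted indices land in a set with the same recursive $S$-structure one dimension down, and one must carry that recursion through rather than assume spanning. The paper's block-triangular matrix argument sidesteps this bookkeeping entirely: Lemma~\ref{lm:derivative} kills the above-diagonal blocks directly, and each diagonal block is handled one layer $L(f,s)$ at a time without ever matching moments against a bubble-factored residual.
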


We have shown the uni-solvence in~\cite{chen_geometric_2021,Chen;Huang:2022FEMcomplex3D} by demonstrating that the DoF-Basis matrix is block lower triangular. To find a basis dual to this set of DoFs, we need to invert this lower triangular matrix, which will involve a Gram matrix $\Big (\int_f \lambda_f^{\alpha_f}\lambda_f^{\beta_f} \dd s \Big )$.

\subsection{Modified Degree of Freedoms}
\label{sec:modifiedDoF}

For a sub-simplex $f \in \Delta_{\ell}(T)$, we choose the rescaled tangential-normal basis 
$$
n_f = \left\{ \frac{\nabla_{f \cup \{i\}} \lambda_i}{|\nabla_{f \cup \{i\}} \lambda_i|^2} \mid i \in f^* \right\}
$$ 
of $\mathscr{N}^f$. 
The modified degrees of freedom (DoFs) are defined as:
\begin{equation}\label{eq:bDoF}
\begin{aligned}
&\quad \texttt{L}_f^{\alpha}(u) := \langle \texttt{b}^{\alpha_f}, \frac{\partial^{|\alpha_{f^*}|}}{\partial n_f^{\alpha_{f^*}}} u \mid _f \rangle, \\
&  \alpha = E_f(\alpha_f) + E_{f^*}(\alpha_{f^*}) \in S_{\ell}(f), \alpha_f\in \mathbb T^{\ell}_{k-s}, \alpha_{f^*}\in \mathbb T^{d-\ell-1}_{s}, \\
&  s = 0,1, \ldots, r_{\ell}, f\in \Delta_{\ell}(T), \ell = 0, 1, \ldots, d.
\end{aligned}
\end{equation}

\begin{theorem}
The DoF-Basis matrix $(\texttt{L}_f^{\alpha}(B^{\beta}))$ with $\texttt{L}_f^{\alpha}$ defined by \eqref{eq:bDoF} is block lower triangular and invertible. Furthermore, we have the formulae
$$
\begin{aligned}
D_{\alpha, \beta} &:= \texttt{L}_f^{\alpha}(B^{\beta}) = \langle \texttt{b}^{\alpha_f},
\frac{\partial^{|\alpha_{f^*}|}}{\partial n_f^{\alpha_{f^*}}} B^{\beta} \mid_f \rangle =
\langle \texttt{b}^{\alpha_f}, \nabla^{|\alpha_{f^*}|} B^{\beta} \mid_f :
n_f^{\alpha_{f^*}} \rangle, \quad \alpha, \beta \in \mathbb{T}_k^d(T), \\
D_{\alpha, \beta} &= \frac{k!}{(k - s)!} \delta_{\alpha, \beta}, \quad \alpha, \beta \in S_{\ell}(f) \cap L(f, s), \quad s = 0, 1, \ldots, r_{\ell},\ \ell = 0, 1, \ldots, d.
\end{aligned}
$$
\end{theorem}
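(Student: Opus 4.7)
The plan is to establish the explicit formula, prove block lower triangular structure, evaluate the diagonal blocks, and deduce invertibility. The formula itself is immediate: from the definition \eqref{eq:bDoF} and $\frac{\partial^s u}{\partial n_f^\alpha} = \nabla^s u : n_f^{\otimes \alpha}$, the restriction to $f$ and the tensor contraction commute (since $n_f^{\alpha_{f^*}}$ is constant), giving
$D_{\alpha,\beta} = \langle \texttt{b}^{\alpha_f}, \nabla^{|\alpha_{f^*}|} B^\beta|_f : n_f^{\alpha_{f^*}}\rangle.$

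For block lower triangularity, I would order the DoFs first by sub-simplex dimension $\ell$ ascending, then by sub-simplex $f$ within each $\ell$, and finally by layer $s = |\alpha_{f^*}|$ ascending within each $S_\ell(f)$. For a row indexed by $\alpha \in S_\ell(f) \cap L(f, s)$, the claim is that $D_{\alpha,\beta}=0$ whenever $\beta$ lies in a strictly later block. By Lemma \ref{lm:derivative}, $\nabla^s B^\beta|_f = 0$ whenever $|\beta_{f^*}| > s$, so the later-layer case within the same $S_\ell(f)$ is immediate. For the remaining cases (different home sub-simplex at equal or larger dimension), it suffices to establish $|\beta_{f^*}| > r_\ell \geq s$. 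If $f \subseteq f'$ (which forces $\ell' > \ell$), the defining constraint $\beta \notin D(f, r_\ell)$ built into $S_{\ell'}(f')$ gives this directly. Otherwise $f \not\subseteq f'$, and I split into two subcases: when $e := f \cap f' \neq \emptyset$, $e$ is a proper sub-simplex of $f'$ with $\dim e \leq \ell - 1$, and the constraint $|\beta_{e^*}| > r_{\dim e}$ combined with the inclusion--exclusion bound $|\beta_{e^*}| \leq |\beta_{f^*}| + |\beta_{(f')^*}|$ (from $e^* = f^* \cup (f')^*$), together with $|\beta_{(f')^*}| \leq r_{\ell'} \leq r_\ell$ and the progression $r_{\dim e} \geq r_{\ell-1} \geq 2 r_\ell$, yields $|\beta_{f^*}| > 2r_\ell - r_\ell = r_\ell$; when $f \cap f' = \emptyset$, $f' \subseteq f^*$ so $|\beta_{f^*}| \geq k - r_{\ell'}$, and $k \geq 2 r_0 + 1$ forces this to exceed $r_\ell$.

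For the diagonal block, $\alpha, \beta \in S_\ell(f) \cap L(f,s)$ means $|\alpha_{f^*}| = |\beta_{f^*}| = s$. Substituting \eqref{eq:bern_grad} into the formula and restricting to $f$, only terms with $\gamma_{f^*} = \beta_{f^*}$ survive (otherwise $B^{\beta-\gamma}|_f$ vanishes); since $|\gamma| = s = |\beta_{f^*}|$, the multi-index $\gamma$ is uniquely determined by $\gamma_f = 0$, $\gamma_{f^*} = \beta_{f^*}$, whence $\gamma! = \beta_{f^*}!$. The surviving $B^{\beta-\gamma}|_f$ equals the degree-$(k-s)$ Bernstein basis function $B_f^{\beta_f}$ on $f$, so pairing with $\texttt{b}^{\alpha_f}$ yields $\delta_{\alpha_f, \beta_f}$. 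For the tensor factor, note that $(\nabla\lambda)^{\otimes \gamma}$ involves only $\nabla\lambda_i$ for $i \in f^*$, all of which lie in $\mathscr{N}^f$; by Lemma \ref{lem:normaldual}, $\{\nabla\lambda_i\}_{i\in f^*}$ and $\{n_f^i\}_{i\in f^*}$ are mutually dual bases of $\mathscr{N}^f$. Applying Lemma \ref{lm:Sbasis} on $\mathscr{N}^f$ (and Corollary \ref{cor:symsym} to absorb the non-symmetric $n_f^{\alpha_{f^*}}$ into its symmetrization without changing the contraction) gives $\operatorname{sym}((\nabla\lambda)^{\otimes\gamma}) : n_f^{\alpha_{f^*}} = \frac{\beta_{f^*}!}{s!}\delta_{\alpha_{f^*}, \beta_{f^*}}$. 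Multiplying the prefactor $\frac{s!\,k!}{(k-s)!\,\gamma!}$ by this contraction collapses to $\frac{k!}{(k-s)!}\delta_{\alpha,\beta}$. Invertibility of the full matrix then follows from block lower triangularity with diagonal blocks that are positive scalar multiples of the identity.

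The main obstacle will be the distance estimate in the second step: bounding $|\beta_{f^*}|$ from below uniformly across all geometric configurations of $f$ and $f'$ (nested, overlapping, or disjoint) hinges on extracting the right sub-simplex $e$ and applying the geometric progression $r_{\ell-1} \geq 2 r_\ell$ together with $k \geq 2 r_0 + 1$ in exactly the right places so that $r_{\dim e} - r_{\ell'} \geq r_\ell$ in the overlap case and $k - r_{\ell'} > r_\ell$ in the disjoint case.
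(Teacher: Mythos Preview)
Your proposal is correct and follows the same overall strategy as the paper: establish block lower triangularity via Lemma~\ref{lm:derivative} by showing $\dist(\beta,f)>r_\ell$ for later blocks, then compute the diagonal blocks by substituting~\eqref{eq:bern_grad} and invoking the duality of Lemma~\ref{lem:normaldual} and Lemma~\ref{lm:Sbasis}. The only substantive difference is that the paper asserts $\dist(\beta,f)>r_\ell$ in one line (implicitly relying on the disjointness in Theorem~\ref{th:appdecT}), whereas you supply a self-contained case analysis (nested, overlapping, disjoint) that re-derives this inequality directly from the constraints on $\boldsymbol r$ and $k$; your diagonal-block computation is identical in substance to the paper's.
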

\begin{proof}
For $\alpha \in S_{\ell}(f)$ and $\beta \in S_m(g)$ with $\ell \leq m$, $f \in \Delta_{\ell}(T)$, $g \in \Delta_{m}(T)$, and $f \neq g$, we have ${\rm dist}(\beta, f) > r_{\ell} \geq {\rm dist}(\alpha, f) = |\alpha_{f^*}|$. Applying Lemma~\ref{lm:derivative} to get $(\frac{\partial^{|\alpha_{f^*}|}}{\partial n_f^{\alpha_{f^*}}} B^{\beta})|_f = 0$ and thus $\texttt{L}_f^{\alpha}(B^{\beta}) = 0$. 

%This, together with Lemma~\ref{lm:b}, gives 
%$$
%\langle \texttt{b}^{\alpha_f}, \frac{\partial^{|\alpha_{f^*}|}}{\partial n_f^{\alpha_{f^*}}} B^{\beta} \rangle = 0.
%$$
Thus, the DoF-Basis matrix is block lower triangular when sorted by the dimension of the decomposition of the lattice. The structure of the matrix can be illustrated as:
$$
\renewcommand{\arraystretch}{1.35}
\begin{array}{cc}
\begin{array}{c}
 \; \texttt{L}_f^{\alpha} \backslash \;   B^{\beta} 
\end{array}
 &  
%\begin{array}{ccccc}
%0 & 1 & \ldots	& r^e-1 & r^e
%\end{array}
%\smallskip
%\\ 
\begin{array}{ccccc}
\quad S_0 \qquad\qquad\;\;\;& \!\!\! S_1\;\;\;\;\; \quad\quad& \;\ldots\quad\quad\;\;\;	&  S_{d-1}\qquad\quad & \quad\, S_d\quad\quad
%\\
%\quad\quad\mathbb T^{0}_{k}(\stackrel{\circ}{f})\quad\quad & \quad\quad\mathbb T^{1}_{k}(\stackrel{\circ}{f})\quad\quad & \cdots\quad\quad\,& \quad\mathbb T^{d}_{d-1}(\stackrel{\circ}{f})\quad\quad\quad\quad & \mathbb T^{d}_{k}(\stackrel{\circ}{f})\quad\quad\quad
\end{array}
\medskip
\\
\begin{array}{c}
S_0 \\ S_1 \\ \vdots \\ S_{d-1} \\ S_d
\end{array} 
& \left(
\begin{array}{>{\hfil$}m{1.5cm}<{$\hfil}|>{\hfil$}m{1.5cm}<{$\hfil}|>{\hfil$}m{1.5cm}<{$\hfil}|>{\hfil$}m{1.5cm}<{$\hfil}|>{\hfil$}m{1.5cm}<{$\hfil}}
\square & 0 & \cdots	& 0 & 0 \\
\hline
\square & \square & \cdots	& 0 & 0 \\
\hline
\vdots & \vdots & \ddots	& \vdots & \vdots \\
\hline
\square & \square & \cdots	& \square & 0 \\
\hline
\square & \square & \cdots	& \square& \square 
\end{array}
\right)
\end{array}.
$$

We now consider one diagonal block. Let $\alpha, \beta \in S_{\ell}(f)$, where $f \in \Delta_{\ell}(T)$. When $|\beta_{f^*}| > |\alpha_{f^*}|$, we apply Lemma~\ref{lm:derivative} again to obtain $\texttt{L}_f^{\alpha}(B^{\beta}) = 0$. 
Thus, sorting by the distance to $f$, i.e. for $s=0,1, \ldots, r_{\ell}$, the DoF-Basis sub-matrix $(\texttt{L}_f^{\alpha}(B^{\beta}))_{\alpha, \beta \in S_{\ell}(f)}$ is also block lower triangular. 

Next, we consider the diagonal block $(\texttt{L}_f^{\alpha}(B^{\beta}))_{\alpha, \beta \in S_{\ell}(f) \cap L(f, s)}$, for a fixed $s = 0, 1, \ldots, r_{\ell}$, where $|\beta_{f^*}| = |\alpha_{f^*}| = s$ and $|\beta_{f}| = |\alpha_{f}| = k-s$. By \eqref{eq:bern_grad}, we have:
\[
\nabla^{s} B^{\beta} = \sum_{\tilde{\alpha} \in \mathbb{T}_s^d, \tilde{\alpha} \leq \beta} \frac{k! s!}{(k-s)! \tilde{\alpha}!} \operatorname{sym}((\nabla \lambda)^{\otimes \tilde{\alpha}}) B^{\beta - \tilde{\alpha}}.
\]
Noting that
$$ 
\frac{\partial^{s}}{\partial n_f^{\alpha_{f^*}}} B^{\beta} = \nabla^{s} B^{\beta} : \left( (\boldsymbol{n}_f^1)^{\otimes (\alpha_{f^*})_1} \otimes \cdots \otimes (\boldsymbol{n}_f^{d-\ell})^{\otimes (\alpha_{f^*})_{d-\ell}} \right),
$$
we obtain:
\begin{align*}
\langle \texttt{b}^{\alpha_f}, \frac{\partial^{s}}{\partial n_f^{\alpha_{f^*}}} B^{\beta} \mid _f\rangle
&= \frac{k! s!}{(k-s)! (\beta - E(\alpha_f))!} \operatorname{sym}((\nabla \lambda)^{\otimes (\beta - E(\alpha_f))}) : n_f^{\alpha_{f^*}} \\
&= \frac{k!}{(k-s)!}\delta_{\alpha, \beta}.
\end{align*}

Hence, the DoF-Basis sub-matrix $(\texttt{L}_f^{\alpha}(B^{\beta}))_{\alpha, \beta \in S_{\ell}(f)}$ corresponding to $S_{\ell}(f)$ is block lower triangular, with each diagonal block being a positively rescaled identity matrix. Thus, the DoF-Basis matrix is block lower triangular and invertible.
\end{proof}

By Lemma~\ref{lm:constructdualbases}, we can invert the transpose of the DoF-Basis matrix to find the local basis (as a linear combination of Bernstein basis) that is dual to the DoFs in \eqref{eq:bDoF}.  
The transpose $(D_{\beta, \alpha})$ is upper triangular, so we construct the dual basis backward on $S_\ell$ for $\ell = d, d-1, \ldots, 0$.  
Similarly, for each $S_\ell(f)$, we proceed by descending distance $s = r_\ell, r_\ell - 1, \ldots, 0$.

For the interior DoFs on $S_d$, the submatrix is the identity:
$$
D_{\alpha, \beta} = \delta_{\alpha, \beta}, \quad \alpha, \beta \in S_d(T).
$$
No modification is needed. In general, basis functions computed later may depend on those computed earlier.

\section{Global Frame, Degrees of Freedom, and Basis}\label{sec:global}
The global DoFs take the same form as the local ones. However, to enforce the required continuity, the global DoFs are uniquely determined by the sub-simplex $f$, not the element containing $f$. This is achieved by a unique labeling of the reference lattice points on $f$ and the choice of a global normal basis.

\subsection{Reference Lattice Decomposition}
The lattice decomposition \eqref{eq:appdecT} is element-wise, meaning that $S_{\ell}(f)$ depends on the element $T$ containing $f$. To overcome this dependency, we introduce a reference set $\hat{S}_{\ell}([f])$ which depends only on $f$, and we note that any $S_{\ell}(f)$ can be viewed as a mapping of $\hat{S}_{\ell}([f])$.

Recall that the restriction operator $R_f: \mathbb{T}_k^d \to \mathbb{T}_s^{\ell}$ is defined in \eqref{eq:restriction}. For a sub-simplex $f \in \Delta_{\ell}(\mathcal{T}_h)$, as an abstract simplex, it may appear in different simplices, say $T_1$ and $T_2$, with different local orderings. For example, a face $f = \{2, 10, 7\} \in \Delta_2(T_1)$ might appear as $f = \{10, 2, 7\}$ in $\Delta_2(T_2)$. The ordering of the vertices of $f$ affects the restriction $\alpha_f$, and to avoid ambiguity, we fix the ordering of abstract simplices by the ascending order of the global index of vertices of the triangulation $\mathcal T_h$ and denote by $[f]$. In the example above, in all simplices containing $f$, it will have the same ordering $[f] = \{2, 7, 10\}$.

Note that the ascending ordering may not always induce the positive orientation of the $d$-simplex $T$ for a given geometric realization. For detailed discussions on indexing, ordering, and orientation via the \texttt{sc} and \texttt{sc3} documentation in $i$\texttt{FEM}, we refer to~\cite{Chen2008ifem}.

With such an ordering, the image of $R_f$ is independent of the element $T$ containing $f$. Namely, $R_f(S_{\ell}(f))$ is uniquely determined for $S_{\ell}(f)\subset \mathbb{T}_k^d(T)$ with $f \in \Delta_{\ell}(T)$ and $T\in \mathcal T_h$. Now, we can introduce the reference lattice set
\begin{equation}\label{eq:hatS}
\hat{S}_{\ell}([f]):= \{(\alpha_f, \gamma) \mid \alpha_f \in R_f(S_{\ell}(f)), \gamma \in \mathbb{T}_{k - |\alpha_{f}|}^{d-\ell - 1} \},
\end{equation}
where the ascending order $[f]$ is used to emphasize the induced orientation depends only on $f$ not on the element $T$ containing $f$. 

\begin{lemma}
For sub-simplex $f \in \Delta_{\ell}(\mathcal{T}_h)$, let $S_{\ell}(f)\subset \mathbb{T}_k^d(T)$ be the subset in decomposition \eqref{eq:appdecT} for some $T\in \mathcal{T}_h$ containing $f$. Then
$$
R: S_{\ell}(f) \to \hat{S}_{\ell}([f]), \quad R(\alpha) = (\alpha_f, \alpha_{f^*}),
$$
is a one-to-one map.
\end{lemma}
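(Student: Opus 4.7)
The plan is to verify well-definedness, then injectivity, and then surjectivity, with the latter being the substantive step.

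First I would check that $R$ is well-defined, i.e., $R(\alpha)\in\hat{S}_\ell([f])$ for every $\alpha\in S_\ell(f)$. Since $\alpha\in S_\ell(f)$ by hypothesis, $\alpha_f=R_f(\alpha)\in R_f(S_\ell(f))$. Moreover the identity $|\alpha|=|\alpha_f|+|\alpha_{f^*}|=k$ forces $|\alpha_{f^*}|=k-|\alpha_f|$, so $\alpha_{f^*}\in\mathbb{T}^{d-\ell-1}_{k-|\alpha_f|}$, which matches the index condition in the definition \eqref{eq:hatS}.

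Injectivity is immediate from the decomposition $\alpha=E_f(\alpha_f)+E_{f^*}(\alpha_{f^*})$: if $R(\alpha)=R(\beta)$ then $\alpha_f=\beta_f$ and $\alpha_{f^*}=\beta_{f^*}$, so the two prolongations coincide and $\alpha=\beta$.

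The main work lies in surjectivity. Given $(\alpha_f,\gamma)\in\hat{S}_\ell([f])$, the natural candidate is $\alpha:=E_f(\alpha_f)+E_{f^*}(\gamma)\in\mathbb{T}^d_k$. By definition of $\hat{S}_\ell([f])$ there exists some witness $\tilde{\alpha}\in S_\ell(f)$ with $R_f(\tilde{\alpha})=\alpha_f$, hence $|\tilde{\alpha}_{f^*}|=k-|\alpha_f|=|\gamma|$. The key observation I would establish is that membership in $S_\ell(f)$ depends on $\alpha$ only through $\alpha_f$ and the scalar $|\alpha_{f^*}|$, not on the individual entries of $\alpha_{f^*}$. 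For the tube $D(f,r_\ell)$ this is obvious since $\dist(\alpha,f)=|\alpha_{f^*}|$. For each lower-dimensional $e\in\Delta_i(f)$ with $i<\ell$, I would use the set-theoretic identity $e^*=(f\setminus e)\cup f^*$ (valid because $e\subseteq f$) to write
\[
|\alpha_{e^*}|=|\alpha_{f\setminus e}|+|\alpha_{f^*}|,
\]
where $\alpha_{f\setminus e}$ is determined entirely by $\alpha_f$. Consequently $\dist(\alpha,e)=\dist(\tilde{\alpha},e)$ as well.

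Therefore $\alpha\in D(f,r_\ell)$ iff $\tilde{\alpha}\in D(f,r_\ell)$, and $\alpha\in D(e,r_i)$ iff $\tilde{\alpha}\in D(e,r_i)$ for every $e\in\Delta_i(f)$, $0\leq i\leq\ell-1$. Since $\tilde{\alpha}\in S_\ell(f)$, the definition of $S_\ell(f)$ in \eqref{eq:appdecT} yields $\alpha\in S_\ell(f)$, and by construction $R(\alpha)=(\alpha_f,\gamma)$, proving surjectivity. The main obstacle is simply to isolate and justify the invariance of $S_\ell(f)$ under redistribution within $f^*$ via the identity for $|\alpha_{e^*}|$; once that is in hand the rest is bookkeeping.
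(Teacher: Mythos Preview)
Your proposal is correct and follows essentially the same approach as the paper: both take a witness $\tilde{\alpha}\in S_\ell(f)$ with $R_f(\tilde{\alpha})=\alpha_f$, form the candidate $E_f(\alpha_f)+E_{f^*}(\gamma)$, and use the set identity $e^*=(f\setminus e)\cup f^*$ to show $\dist(\cdot,e)$ depends only on $\alpha_f$ and $|\alpha_{f^*}|$. Your treatment is slightly more complete in that you verify well-definedness of $R$ explicitly, which the paper leaves implicit.
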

\begin{proof}
Obviously, $R$ is injective. We then show that it is surjective. Take $(\alpha_f, \gamma)\in \hat{S}_{\ell}([f])$, let $\alpha\in S_{\ell}(f)$ such that $R_f(\alpha) = \alpha_f$. Define
$$
\bar{\alpha} = E_f(\alpha_f) + E_{f^*}(\gamma) \in \mathbb{T}_k^d(T).
$$

We claim that $\bar{\alpha} \in S_{\ell}(f)$. First of all, $R_f(\alpha) = R_f(\bar{\alpha})$. Take any $e \in \Delta(f)$. Since $e \subseteq f$, we have $f^* \subseteq e^*$, and
$$
e^* \setminus f^* = e^* \cap f = f \cap e^* = f \setminus e.
$$
Thus,
$$
\begin{aligned}
\dist(\bar{\alpha}, e) &= |R_{e^*}(\bar{\alpha})| = |R_{f^*}(\bar{\alpha})| + |R_{f \setminus e}(\bar{\alpha})| = |\gamma| + |R_{f \setminus e}(\alpha)| \\
&= |\gamma| + |R_f(\alpha)| - |R_e(\alpha)| = k - |R_e(\alpha)| = \dist(\alpha, e).
\end{aligned}
$$
As $\alpha\in S_{\ell}(f)$, all distance conditions are satisfied, and consequently, $\bar{\alpha} \in S_{\ell}(f)$.
\end{proof}

Using the reference lattice points, we will have a direct union
\[%\label{eq:Skrd}
\mathbb S_{k,\bs r}^d:= \Oplus_{\ell=0}^d\Oplus_{f\in \Delta_{\ell}(\mathcal T_h)}\hat{S}_{\ell}([f]),
\]
which is a generalization of lattice decomposition \eqref{eq:Xsum} for $\bs r = \bs 0$ to smoothness vector $\bs{r} = (r_0, r_1, \dots, r_d)$.

We will assign a labeling of $\mathbb S_{k,\bs r}^d$, i.e., each lattice point in $\mathbb S_{k,\bs r}^d$ will have a unique index called the global index of a reference lattice point. The lattice decomposition \eqref{eq:appdecT} is element-wise and each lattice point in $\mathbb T_k^d$ will have a local index. During the assembling process, it is unavoidable to figure out the mapping between the local index and the global index of a lattice point.

To describe this mapping, we explicitly include the notation of the standard abstract complex $\texttt{S}_d = \{0, 1, \ldots, d\}$, an abstract complex $\TT = \{\TT(0), \TT(1), \ldots, \TT(d)\}$ with $\TT(i)\in V = \{1, 2, \ldots, N\}$, and $T\in \mathcal T_h$ as a geometric realization of $\TT$. We write
\begin{align}
\label{eq:localdec} \mathbb{T}_k^d(\texttt{S}_d) &= \Oplus_{\ell=0}^d \Oplus_{f \in \Delta_\ell(\texttt{S}_d)} S_\ell(f(\texttt{S}_d)), \\
\notag%\label{eq:globaldec} 
\mathbb{T}_k^d(\texttt{T}) &= \Oplus_{\ell=0}^d \Oplus_{f \in \Delta_\ell(\TT)} S_\ell(f(\texttt{T})) \stackrel{R}{\longrightarrow}  \Oplus_{\ell=0}^d \Oplus_{f(\TT) \in \Delta_\ell(\mathcal T_h)} \hat S_\ell([f(\texttt{T})]).
\end{align}
We will assign a labeling of lattice points in $S_\ell(f(\texttt{S}_d))$ based on the lattice decomposition \eqref{eq:localdec}, which is called a local indexing. The face $f(\texttt{S}_d)\subset \{0,1,\ldots, d\}$ and the local face $f(\texttt{T})\subset \{\TT(0), \TT(1), \ldots \TT(d)\}$ will have vertices with a global index. Therefore, $f(\texttt{T}) \in \Delta_{\ell}(\mathcal T_h)$ and $S_\ell(f(\texttt{T}))$ can be mapped to $\hat S_\ell([f(\texttt{T})])$, which gives a local to global index map. The roadmap is summarized below
$$
S_\ell(f(\texttt{S}_d)) \to S_\ell(f(\texttt{T})) \to \hat S_\ell([f(\texttt{T})]).
$$
If $[f(\texttt{T})]$ is not used in $\hat S_\ell$, one needs to find a permutation between local and global faces~\cite{ChenChenHuangWei2023}.
 
%The mesh data structure and the permutation between local and global faces can be found in \cite{ChenChenHuangWei2023}. The permutation $\sigma$ can be skipped if we assume all abstract simplicies $\TT$ are ascending ordered.

%We discuss the extension of lattice points from a face $f\in \Delta_{\ell}(\mathcal T_h)$ to a simplex $T\in \mathcal T_h$ containing $f$.  
%
%\begin{lemma}
%\label{lm:spinsymmetric}
%For all $f_0, f_1 \in \Delta_{\ell}(T)$, we have
%$$
%\{\alpha_{f_0} \mid \alpha \in S_{\ell}(f_0)\} = 
%\{\alpha_{f_1} \mid \alpha \in S_{\ell}(f_1)\},
%$$
%and for any $f \in \Delta_{\ell}(T)$,
%$$
%\{(\alpha_f, \beta) \mid \alpha \in S_{\ell}(f),\ \beta \in
%\mathbb{T}_{|\alpha_{f^*}|}^{d-\ell}\} =
%\{(\alpha_f, \alpha_{f^*}) \mid \alpha \in S_{\ell}(f)\}.
%$$
%\end{lemma}
%
%For a sub-simplex $f \in \Delta_{\ell}(\mathcal{T}_h)$, it might be contained in several $T\in \mathcal T_h$ and  
%let $\sigma(f) \in \Delta_{\ell}(T)$ be the local face of $f$.
%Define
%\begin{equation}
%\label{eq:hatSl}
%\hat{S}_{\ell}([f]) := \left\{(\alpha_{\sigma(f)}, \beta) \mid \alpha \in S_{\ell}(\sigma(f)),\ 
%\beta \in \mathbb{T}_{|\alpha_{\sigma(f)^*}|}^{d-l} \right\}.
%\end{equation}
%According to Lemma~\ref{lm:spinsymmetric}, $\hat{S}_l(f)$ is independent of both $f$ and $\sigma(f)$.

\subsection{Algorithm to Find the Lattice Decomposition}
%We present a recursive algorithm to decompose the lattice points $\mathbb{T}^d_k(T)$ into a collection of subsets $S_{\ell}(f)$, where $f\in \Delta_{\ell}(T)$.
%
To begin with, we generate the lattice points $\mathbb{T}^d_k$ via the identity:
$$
\mathbb{T}^d_k = \Oplus_{i=0}^k \{[i, \alpha] \mid \alpha \in \mathbb{T}^{d-1}_{k-i}\}.
$$
Thus, $\mathbb{T}^d_k$ can be computed using the recursive algorithm shown in Algorithm~\ref{alg:lattice}.

\begin{algorithm}
\caption{Generate Lattice Points (GLP$(k,d)$)}
\label{alg:lattice}
\begin{algorithmic}[1]
\State \textbf{Input:} $k, d$ \Comment{degree and dimension}
\State \textbf{Output:} $\mathbb{T}_k^d$ \Comment{lattice points}
\Function{GLP}{$k, d$}
    \If{$d = 0$}
        \State \Return $\lstinline{[[k]]}$
    \Else
        \State Initialize empty list $\mathbb{T}_k^d$
        \For{$i$ from $k$ down to $0$}
            \State $\mathbb{T}_{k-i}^{d-1} \gets \Call{GLP}{k-i, d-1}$
            \For{each $\beta$ in $\mathbb{T}_{k-i}^{d-1}$}
                \State Append $[i, \beta]$ to $\mathbb{T}_k^d$ 
            \EndFor
        \EndFor
        \State \Return $\mathbb{T}_k^d$ 
    \EndIf
\EndFunction
\end{algorithmic}
\end{algorithm}

The set $D(f, r)$ is a subset of $\mathbb{T}^d_k$, obtained by filtering those
lattice points whose $|\alpha_{f^*}|$ is less than $r$.  
Then, $S_{\ell}(f)$ is derived from $D(f, r)$ through a set operation, 
and $\hat{S}_{\ell}([f])$ is further constructed by modifying $S_{\ell}(f)$, 
as defined in~\eqref{eq:hatS}.

%\begin{algorithm}
%\caption{Generate S Lattice Points}
%\label{alg:slattice}
%\begin{algorithmic}[1]
%    \State \textbf{Input:} $\mathbb{T}_k^d, f, r_l$ \Comment{lattice points,
%    face, degree}
%    \State \textbf{Output:} $S_{\ell}(f)$ \Comment{lattice points on face}
%    \Function{GSLP}{$\mathbb{T}_k^d, f, r_l$}
%        \State Initialize empty list $S_{\ell}(f)$
%        \For{each $\alpha$ in $\mathbb{T}_k^d$}
%            \If{$|\alpha_f| \leq k - r_l$}
%                \State Append $\alpha$ to $S_{\ell}(f)$
%            \EndIf
%        \EndFor
%        \State \Return $S_{\ell}(f)$
%    \EndFunction
%\end{algorithmic}
%\end{algorithm}

\subsection{Global DoFs}
For a sub-simplex $f \in \Delta_{\ell}(\mathcal{T}_h)$, we shall choose a global basis $N_f := \{\bs{N}_f^1, \ldots, \bs{N}_f^{d-\ell}\}$ for the normal plane $\mathscr{N}^f$, i.e., depending on $f$ rather than the element containing $f$.

The global DoFs of the $C^m$ finite element  
on $\mathcal{T}_h$ are defined by:
\begin{equation}
    \label{eq:globalDoF}
\texttt{G}^{\alpha}(u) := \langle \texttt{b}^{\alpha_f}, \frac{\partial^{|\gamma|}}{\partial
    N_f^{\gamma}}u \mid_f \rangle, \quad \alpha = (\alpha_f, \gamma) \in
    \hat{S}_{\ell}([f]),\ f\in \Delta_{\ell}(\mathcal{T}_h),\ \ell = 0, 1, \ldots, d,
\end{equation}
which is independent of the element containing $f$.

\begin{theorem}
The DoF \eqref{eq:globalDoF} will define a $C^m$-conforming finite element space $\mathcal S_{k,\bs r}^d(\mathcal T_h)$. 
\end{theorem}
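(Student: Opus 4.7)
The plan is to verify three properties: local unisolvence of the global DoFs on each element, single-valuedness of each DoF across shared sub-simplices, and $C^m$ continuity across element interfaces.

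\textbf{Local unisolvence.} On any element $T \in \mathcal{T}_h$ with abstract simplex $\TT$, the global DoFs $\texttt{G}^\alpha$ restricted to $T$ are in bijective linear correspondence with the local DoFs $\texttt{L}_f^\alpha$ from \eqref{eq:bDoF}. On the index side, the bijection $R : S_\ell(f) \to \hat{S}_\ell([f])$ established in the previous subsection identifies the two index sets. On the value side, the transition between the local tangential-normal basis $n_f$ and the global basis $N_f$ induces an invertible linear map on the normal-derivative components via the tensor change-of-coordinates formula in Lemma~\ref{lem:tauexpressexchbasis}. Unisolvence on $T$ of the $\texttt{G}$-DoFs therefore follows from the result for the $\texttt{L}$-DoFs proved in Section~\ref{sec:local_frame}.

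\textbf{Single-valuedness across shared sub-simplices.} For each $f \in \Delta_\ell(\mathcal{T}_h)$, both ingredients of $\texttt{G}^\alpha$ depend only on $f$: the dual basis $\texttt{b}^{\alpha_f}$ is defined using the ascending ordering $[f]$, which is fixed independent of any containing element, and $N_f$ is selected once per $f$. The functional $\texttt{G}^\alpha(u)$ therefore depends only on $u|_f$ and its $N_f$-derivatives on $f$, quantities intrinsic to $f$. Thus the value is the same whether computed from either element containing $f$.

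\textbf{$C^m$ continuity.} Let $T_1, T_2 \in \mathcal{T}_h$ share a $(d{-}1)$-face $F$, and set $u_* := u|_{T_1} - u|_{T_2}$; I show that if all global DoFs of $u_*$ associated with $F$ and its sub-simplices vanish, then $\frac{\partial^s u_*}{\partial N_F^s}\big|_F \equiv 0$ on $F$ for $s = 0, 1, \ldots, m$. For each $f \subseteq F$ with $f \in \Delta_\ell(\mathcal{T}_h)$, we have $\mathscr{N}^F \subseteq \mathscr{N}^f$, so the normal frame $N_f$ can be decomposed (using Lemma~\ref{lem:tauexpressexchbasis}) into a component along $N_F$ and components along the normal plane of $f$ within $F$. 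Each multi-index $\gamma$ then splits as $\gamma = (s, \gamma_F)$, with $s$ counting $N_F$-derivatives and $\gamma_F$ the tangential-to-$F$ normals. For fixed $s \leq m$, collecting the DoFs whose $N_F$-order equals $s$ across all $f \subseteq F$ yields precisely the local DoF system on $F$ applied to the polynomial $\frac{\partial^s u_*}{\partial N_F^s}\big|_F \in \mathbb{P}_{k-s}(F)$, with the shifted smoothness vector $(r_0 - s, \ldots, r_{d-2} - s)$. By the doubling condition $r_\ell \geq 2 r_{\ell+1}$, this shifted vector still satisfies the hypotheses of Theorem~\ref{th:applocalPrCm} in dimension $d-1$, and $k - s \geq 2(r_0 - s) + 1$. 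Unisolvence on $F$ then forces $\frac{\partial^s u_*}{\partial N_F^s}\big|_F \equiv 0$, giving $C^m$ continuity across $F$.

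\textbf{Main obstacle.} The delicate step is the third, which rests on identifying, after peeling off $s$ derivatives along $N_F$, the restricted DoFs on $F$ with a lattice decomposition of $\mathbb{T}_{k-s}^{d-1}(F)$ under the shifted smoothness vector. One must verify that the projection $(\alpha_f, \gamma) \mapsto (\alpha_f, \gamma_F)$ maps $\hat{S}_\ell([f])$ onto a valid reference decomposition in $(d-1)$ dimensions, and that the choice of global normal basis $N_f$ on $f \subseteq F$ is compatible with the inherited normal frame on $f$ regarded as a sub-simplex of $F$. Once this combinatorial-geometric reduction is in place, the unisolvence step is routine.
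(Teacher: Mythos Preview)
Your approach differs substantially from the paper's. The paper's proof is two lines: it observes that on each sub-simplex $f$ and each layer $L(f,s)$, the dual-Bernstein functionals $\{\langle\texttt{b}^{\alpha_f},\cdot\rangle\}$ and the integral functionals $\{\int_f \lambda_f^{\alpha_f}(\cdot)\,\dd s\}$ span the same subspace of $\mathbb{P}_{k-s}(f)'$, so the global DoFs \eqref{eq:globalDoF} and the integral-form DoFs of Theorem~\ref{th:applocalPrCm} determine each other by an invertible linear map on each $f$; $C^m$-conformity then follows immediately from the already-established result (Theorem~A.8 of \cite{Chen;Huang:2022FEMcomplex3D}) for the integral DoFs.

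You instead attempt a self-contained $C^m$-continuity argument via dimensional reduction on a shared $(d-1)$-face $F$. Your first two steps (local unisolvence via the transformation matrix, and single-valuedness from the intrinsic dependence on $[f]$ and $N_f$) are fine and mirror what the paper sets up in Section~\ref{sec:global}. The third step, however, is precisely where the real work lies, and you correctly flag it as the main obstacle. The claim that after peeling off $s$ copies of $\partial_{N_F}$ the remaining DoFs on sub-simplices $f\subseteq F$ reassemble into a unisolvent system for $\mathbb{P}_{k-s}(F)$ with a shifted smoothness vector is exactly the inductive lemma underpinning the cited Theorem~A.8; carrying it out requires the combinatorial identity relating $S_\ell(f)$ in $T$ to the analogous decomposition on $F$, together with the compatibility of $N_f$ with the frame $(N_F,\text{normals to }f\text{ within }F)$. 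This is not a wrong route, but you are re-deriving the hard part of a theorem the paper simply invokes. The paper's reduction is the economical move: once the modified DoFs are seen to be a sub-simplex-wise invertible transformation of the integral DoFs, conformity is inherited for free.
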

\begin{proof}
Both the integral DoFs $\{\texttt{i}^{\alpha_f}(\cdot) := \int_f \lambda^{\alpha_f}(\cdot) \dd s \mid \alpha_f \in R_f(S_{\ell}(f)\cap L(f,s))\}$ and $\{\langle \texttt{b}^{\alpha_f},\cdot \rangle \mid \alpha_f \in R_f(S_{\ell}(f)\cap L(f,s))\}$ are bases of $\mathbb P_{k-s}'(R_f(S_{\ell}(f)\cap L(f,s)))$. Therefore the DoFs
$$
\int_f \lambda^{\alpha_f}\frac{\partial^{|\gamma|}}{\partial
    N_f^{\gamma}}u \dd s
$$
and $\texttt{G}^{\alpha}(u)$ can be expressed in terms of each other.
By Theorem A.8 in \cite{Chen;Huang:2022FEMcomplex3D}, we conclude the function is in $C^m(\Omega)$. 
\end{proof}
As a byproduct, we have the following dimension formula for a $C^m(\Omega)$-conforming finite element space $\mathcal S_{k,\bs r}^d(\mathcal T_h)$
$$
\dim \mathcal S_{k,\bs r}^d(\mathcal T_h) = \sum_{\ell = 0}^d |\Delta_{\ell}(\mathcal T_h)| |\hat{S}_{\ell}([f])|.
$$
The cardinality $|\hat{S}_{\ell}([f])|$ is hard to have an explicit formula, but we have presented a numerical method to find the lattice decomposition in \S 5.2 and $\dim \mathcal S_{k,\bs r}^d(\mathcal T_h)$ is computable. 
%\LC{check the reference on spline (in the reference fold) on the Strang conjecture on the dimension.}
%\XH{The Strang conjecture is for spline space, not superspline space. FE space is a superspline space.}
%We explicitly state below:
%\begin{equation}\label{eq:globalDoF}
%\begin{aligned}
%\texttt{G}_f^{\alpha}(u) := \langle \texttt{b}^{\alpha_f}, \frac{\partial^{|\alpha_{f^*}|}}{\partial N_f^{\alpha_{f^*}}} u \mid _f \rangle, & \quad \alpha = E(\alpha_f) + E(\alpha_{f^*}) \in S_{\ell}(f), \alpha_f\in \mathbb T^{\ell}_{k-s}, \alpha_{f^*}\in \mathbb T^{\ell}_{s},\\
%& \quad  s = 0,1, \ldots, r_{\ell}, f\in \Delta_{\ell}(\mathcal T_h), \ell = 0, 1, \ldots, d.
%\end{aligned}
%\end{equation}
%The lattice $\alpha_f\in \mathbb T^{\ell}_{k-s}, \alpha_{f^*}\in \mathbb T^{\ell}_{s}$ is irrelevant to the element containing $f$. The extension operator $E$, however, depends on the element $T$, which will define a global to local index mapping of lattice points. We will discuss the details in Section \ref{sec:global2locallattice}. 
%\begin{equation}\label{eq:globalDoF}
%\langle \texttt{b}^{\alpha_f}, \frac{\partial^{|\alpha_{f^*}|}}{\partial N_f^{\alpha_{f^*}}} u \rangle, \quad \alpha = \alpha_f + \alpha_{f^*} \in S_{\ell}(f), \ f \in \Delta_{\ell}(\mathcal{T}), \ell = 0, 1, \ldots, d.
%\end{equation}

\subsection{Transformation From Local to Global DoFs}
\label{sec:local2global}
The normal derivatives for the local normal basis and the global normal basis can be related by the change of variables. Recall that $\{\texttt{L}^{\alpha}\}_{\alpha \in \mathbb{T}_k^d}$ is the local DoFs as in \eqref{eq:bDoF}, and $\{\texttt{G}^{\alpha}\}_{\alpha \in \mathbb{T}_k^d}$ is the global DoFs as in \eqref{eq:globalDoF}. Then there exists a matrix $\boldsymbol{T} = (T_{\alpha,\beta})$ such that:
$$
(\texttt{L}^{\alpha}) =
(T_{\alpha,\beta}) (\texttt{G}^{\beta}), \quad \alpha, \beta \in \mathbb{T}_k^d.
$$
Consequently, from
$$
\langle (\texttt{L}^{\alpha}), (\psi_{\alpha})\rangle = \langle (T_{\alpha,\beta}) (\texttt{G}^{\beta}), (\psi_{\alpha}) \rangle = \langle  (\texttt{G}^{\beta}), (T_{\alpha,\beta})^{\intercal}(\psi_{\alpha}) \rangle = \langle (\texttt{G}^{\beta}), (T_{\beta, \alpha}) (\psi_{\alpha}) \rangle,
$$
we conclude that 
$\{ T_{\beta,\alpha} \psi_{\alpha} \}$
is the set of basis functions dual to
$\{\texttt{G}^{\beta}\}$, where $\{\psi_{\alpha}\}$ is the set
of basis functions dual to $\{\texttt{L}^{\alpha}\}$ and has been computed locally in Section~\ref{sec:local_frame}.

% \LC{Stack the basis into a vector and write
% $$\texttt{L} = \bs T \texttt{G}, \quad I = \langle \texttt{L}, \psi \rangle = \langle \bs T \texttt{G}, \psi \rangle =   \langle \texttt{G}, \bs T^{t} \psi \rangle. $$}

Now, we present the computation of the transformation matrix $\{ T_{\alpha, \beta} \}$. 
Let $f \in \Delta_{\ell}(\mathcal{T}_h)$, and suppose the dual normal basis of  
$N_f = \{\bs{N}_f^1, \ldots, \bs{N}_f^{d-\ell}\}$  
is denoted by $\hat{N}_f := \{\hat{\boldsymbol{N}}_f^1, \ldots, \hat{\boldsymbol{N}}_f^{d-\ell}\}$.  
Let $n_f$ denote the local normal basis of $f$ in some element $T\in \mathcal T_h$ containing $f$.  

For $\alpha \in \mathbb{T}_k^{d}$, denote by $s = |\alpha_{f^*}| = \dist(\alpha, f) $.  
According to Lemma~\ref{lm:Sbasis} and \eqref{eq:tensordual}, the symmetric tensor  
$\sym(n_f^{\otimes\alpha_{f^*}})$ can be written as a linear combination of  
$\{\sym(N_f^{\otimes\gamma})\}_{\gamma \in \mathbb{T}_s^{d-\ell-1}}$ as follows:
$$
\sym(n_f^{\otimes\alpha_{f^*}}) = \sum_{\gamma \in \mathbb{T}_s^{d-\ell-1}} 
\tilde{T}_{\alpha_{f^*}, \gamma} \, \sym(N_f^{\otimes\gamma}),
$$
where
$$
\tilde{T}_{\alpha_{f^*}, \gamma} = \frac{s!}{\gamma!} \, \sym(n_f^{\otimes\alpha_{f^*}}) : 
\sym(\hat{N}_f^{\otimes\gamma}).
$$
This implies that
$$
\frac{\partial^{s}}{\partial n_f^{\alpha_{f^*}}}u = 
\nabla^{s}u : n_f^{\otimes\alpha_{f^*}} 
= \sum_{\gamma \in \mathbb{T}_r^{d-\ell-1}}
\tilde{T}_{\alpha_{f^*}, \gamma} \nabla^{s}u : \sym(N_f^{\otimes\gamma})
= \sum_{\gamma \in \mathbb{T}_r^{d-\ell-1}}
\tilde{T}_{\alpha_{f^*}, \gamma} \frac{\partial^{s}}{\partial
N_f^{\gamma}}u.
$$
Let $\beta \in \mathbb{T}_k^{d}$ be such that $\beta_f = \alpha_f$ and $\beta_{f^*} = \gamma$.  
Then the formula above gives:
$$
\langle \texttt{b}^{\alpha_f}, \, \frac{\partial^{s}}{\partial n_f^{\alpha_{f^*}}}u \mid_f \rangle = 
\sum_{\beta \in \mathbb{T}_k^{d}, \, \beta_f = \alpha_f} 
\tilde{T}_{\alpha_{f^*}, \beta_{f^*}} \, 
\langle \texttt{b}^{\beta_f}, \, \frac{\partial^{|\beta_{f^*}|}}{\partial N_f^{\beta_{f^*}}}u \mid_f\rangle.
$$

Hence, we obtain the explicit form of the transformation matrix:
\begin{equation} \label{eq:transformmatrix}
    T_{\alpha,\beta} = 
    \begin{cases}
        \displaystyle \frac{s!}{\beta_{f^*}!} \, \sym(n_f^{\otimes\alpha_{f^*}}) : 
        \sym(\hat{N}_f^{\otimes\beta_{f^*}}), 
        & \text{if } \alpha, \beta \in S_{\ell}(f)\cap L(f, s), \\
        \quad 0, & \text{otherwise.}
    \end{cases}
\end{equation}

We summarize the discussion in the following theorem and provide an example for the lowest-order $C^1$ finite element in two dimensions in Appendix~\ref{appdx:C12d}.
\begin{theorem}\label{lm:GlobalBasis}
Let $\{\psi_{\alpha}\}_{\alpha \in \mathbb{T}_k^d}$ be the set of local basis functions dual to the degrees of freedom $\{\texttt{L}^{\alpha}\}_{\alpha \in \mathbb{T}_k^d}$ defined in~\eqref{eq:bDoF}, constructed using local normal frames. Then, the corresponding global basis functions, which are dual to the degrees of freedom $\{\texttt{G}^{\alpha}\}_{\alpha \in \mathbb{T}_k^d}$ in~\eqref{eq:globalDoF} using global normal frames, are given by $\{ T_{\beta,\alpha}\psi_{\alpha}\}$, where $(T_{\alpha, \beta})$ is the transformation matrix defined in~\eqref{eq:transformmatrix} and $(T_{ \beta, \alpha})$ is its transpose.
\end{theorem}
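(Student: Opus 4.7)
The plan is a clean change-of-basis and adjoint computation, following the discussion that motivates the statement.

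First, I would verify the intertwining identity $\texttt{L}^\alpha = \sum_\beta T_{\alpha,\beta}\texttt{G}^\beta$ with $T$ given by~\eqref{eq:transformmatrix}. Fix $f\in\Delta_\ell(\mathcal T_h)$ and $\alpha\in S_\ell(f)\cap L(f,s)$. The tensor $\sym(n_f^{\otimes\alpha_{f^*}})$ admits an expansion in the scaled dual basis $\{(s!/\gamma!)\sym(\hat N_f^{\otimes\gamma})\}_{\gamma\in\mathbb{T}_s^{d-\ell-1}}$ provided by Lemma~\ref{lm:Sbasis}. Contracting with $\nabla^s u$ and invoking Corollary~\ref{cor:symsym} (exploiting the symmetry of $\nabla^s u$) converts the symmetrized contraction into $\partial^{|\gamma|}u/\partial N_f^\gamma$; pairing the restriction with $\texttt{b}^{\alpha_f}$ on $f$ then produces exactly the entry $T_{\alpha,\beta}$ for $\beta=(\alpha_f,\gamma)$. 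For $\beta$ outside $S_\ell(f)\cap L(f,s)$ the entry vanishes, by Lemma~\ref{lm:derivative} together with the support property of $\texttt{b}^{\alpha_f}$, giving the block structure of $T$.

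Second, using the local duality $\langle \texttt{L}^\alpha,\psi_\beta\rangle=\delta_{\alpha,\beta}$ established in Section~\ref{sec:local_frame}, I would bring the transpose onto the functions via the adjoint relation $\langle TG,\psi\rangle=\langle G,T^{\intercal}\psi\rangle$ to obtain, in matrix form,
\[
    I \;=\; \bigl\langle (\texttt{L}^\alpha),(\psi_\alpha)\bigr\rangle
      \;=\; \bigl\langle (T_{\alpha,\beta})(\texttt{G}^\beta),(\psi_\alpha)\bigr\rangle
      \;=\; \bigl\langle (\texttt{G}^\beta),(T_{\beta,\alpha})(\psi_\alpha)\bigr\rangle,
\]
where $(T_{\beta,\alpha})$ denotes the transpose of $(T_{\alpha,\beta})$. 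The identity of this pairing matrix is precisely the assertion that $\{T_{\beta,\alpha}\psi_\alpha\}$ is dual to $\{\texttt{G}^\beta\}$.

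The only nontrivial point is the explicit scalar $s!/\beta_{f^*}!$ in~\eqref{eq:transformmatrix}, whose derivation requires careful coordination of the duality constant $\alpha!/r!$ from Lemma~\ref{lm:Sbasis}, the combinatorial factor in the symmetrization identity~\eqref{eq:sym}, and the symmetric-tensor contraction~\eqref{eq:symsym}. Once this factor is pinned down, the transposition step is purely formal; the block structure of $T$ then automatically preserves the locality of the resulting global basis in the lattice decomposition, and the theorem follows.
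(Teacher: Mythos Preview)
Your proposal is correct and follows exactly the argument the paper gives in the discussion preceding the theorem: expand $\sym(n_f^{\otimes\alpha_{f^*}})$ in the basis $\{\sym(N_f^{\otimes\gamma})\}$ via Lemma~\ref{lm:Sbasis} to obtain $\texttt{L}^\alpha=\sum_\beta T_{\alpha,\beta}\,\texttt{G}^\beta$, then transpose using the adjoint identity $\langle TG,\psi\rangle=\langle G,T^{\intercal}\psi\rangle$. One minor point: the block structure of $T$ comes directly from the fact that the normal-frame change leaves $\alpha_f$ fixed and preserves $|\alpha_{f^*}|=s$, so your appeal to Lemma~\ref{lm:derivative} and a ``support property of $\texttt{b}^{\alpha_f}$'' is unnecessary (and not quite the right reason) --- the vanishing is purely algebraic in the tensor expansion.
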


\section{Numerical Results}\label{sec:numerics}
In this section, we shall numerically demonstrate the effectiveness of the
proposed method.
%The first two examples separately verify the convergence of the proposed method
%for the interpolation of the function in the
%two-dimensional and three-dimensional cases.
%The third and fourth examples apply this method to solve the biharmonic and
%triple harmonic equations, respectively, in the two-dimensional case.
The code is implemented by using the FEALPy package \cite{fealpy}.

\subsection{Interpolation of Smooth Functions}
The use of $\texttt{b}^\alpha$ allows for an easy construction of basis functions, but it cannot act on general smooth functions. Therefore, a classical nodal interpolation operator cannot be defined.  
To address this, we extend the DoFs in \eqref{eq:bDoF} for polynomials to $(C^m(T))'$ as follows:
\begin{equation}\label{eq:globalDoFInt}
    \langle \texttt{b}^{\alpha_f}, \Pi_{|\alpha_f|}^f\frac{\partial^{|\alpha_{f^*}|}}{\partial n_f^{\alpha_{f^*}}}u \mid_f \rangle, \quad \alpha = \alpha_f + \alpha_{f^*} \in S_{\ell}(f).
\end{equation}
Here, $\Pi_{k}^f$ denotes the Lagrange interpolation operator of degree $k$.
It is easy to see that if $u$ is a polynomial of degree $k$ on $T$, then 
DoFs \eqref{eq:bDoF} equal to the extended DoFs
\eqref{eq:globalDoFInt}.  
We define the local interpolation operator $I_T^k : C^m(T) \to \mathbb{P}_k(T)$
such that $I_T^k(u)$ has the same DoFs \eqref{eq:globalDoFInt} as $u$.
%Then $I_T^k$ is an identity operator on $\mathbb{P}_k(T)$.

Similarly, the global DoFs \eqref{eq:globalDoF} can be extended to $(C^m(\Omega))'$ as follows:
\begin{equation}
    \label{eq:DoFglobalonmeshInt}
    \langle \texttt{b}^{\alpha_f}, \Pi_{k-|\gamma|}^f\frac{\partial^{|\gamma|}}{\partial N_f^{\gamma}}u \rangle, \quad (\alpha_f, \gamma) \in \hat{S}_{\ell}([f]).
\end{equation}
The global interpolation operator $I_h^k : C^m(\Omega) \to \mathcal S_{k,\bs r}^d(\mathcal T_h)$ is defined so that $I_h^k(u)$ has the same DoFs \eqref{eq:DoFglobalonmeshInt} as $u$. Following the convention of finite element, we denote by $u_I = I_h^k(u)$.

Although different normal derivatives are used in the DoFs \eqref{eq:globalDoFInt} and \eqref{eq:DoFglobalonmeshInt}, when $u\in C^m(\Omega)$, the local interpolation $I_T^k(u)$ will be consistent with the global interpolation $I_h^k(u)$. That is, $I_T^k(u)$, defined piecewise, will give a smooth finite element function $I_h^k(u)$ as the transformation of normal derivatives holds for smooth functions to be interpolated. So for interpolation, local DoFs and local basis are enough. 

We present several numerical examples to verify the convergence of the interpolation for smooth functions. Our focus is on the case \( m \geq 1 \), since for \( m = 0 \) the Lagrange element is already well known. The results in Tables~\ref{tab:2d2}-\ref{tab:3d2} show that the convergence rates are optimal.

%\subsubsection{Example 1: 2D Interpolation}
%We first interpolate $u =\sin(4x)\cos(5y)$, with optimal convergence rate shown in Table \ref{tab:2d2} and Table \ref{tab:2d3}.
%\phantomsection
%\label{sec:example1}

%\begin{table}[H]
%  \centering
%  \caption{\nameref{sec:example4} Interpolation error and the convergence rate for $k=9$, $m=0$.}
%  \renewcommand{\arraystretch}{1.125}
%  \resizebox{10cm}{!}{
%  \begin{tabular}{@{}c c c c c@{}}
%    \toprule
%    $h$ 
%    & $\|u - u_h\|_{\Omega,0}$ & Rate 
%    & $\|\nabla u - \nabla u_h\|_{\Omega,0}$ & Rate \\
%    \midrule
%    1     & 1.5730e-02 & --   & 6.5268e-01 & --   \\
%    0.5   & 1.2298e-04 & 7.00 & 6.5141e-03 & 6.64 \\
%    0.25  & 1.0875e-07 & 10.14 & 1.2051e-05 & 9.08 \\
%    0.125 & 1.1207e-10 & 9.92 & 2.5030e-08 & 8.91 \\
%    \bottomrule
%  \end{tabular}}
%  \label{tab:3d1}
%\end{table}
%\begin{table}[H]
%  \centering
%  \caption{Example 1 (Interpolation error): Error and convergence rate for $k=5$, $m=0$.}
%  \renewcommand{\arraystretch}{1.15}
%
%  % --- First part ---
%  \begin{tabular}{c c c c c}
%    \toprule
%    \makebox[1.2cm]{$h$}
%    & \makebox[3.2cm]{$\|u - u_h\|_{\Omega,0}$}
%    & \makebox[1.5cm]{Rate}
%    & \makebox[3.2cm]{$\|\nabla u - \nabla u_h\|_{\Omega,0}$}
%    & \makebox[1.5cm]{Rate} \\
%    \midrule
%    1     & 4.1751e-02 & --   & 7.8041e-01 & --   \\
%    0.5   & 8.6563e-04 & 5.59 & 3.3660e-02 & 4.54 \\
%    0.25  & 1.3566e-05 & 6.00 & 1.0597e-03 & 4.99 \\
%    0.125 & 2.1594e-07 & 5.97 & 3.3745e-05 & 4.97 \\
%
%    \bottomrule
%  \end{tabular}
%  \label{tab:2d1}
%\end{table}

\begin{table}[H]
  \centering
  \caption{Interpolation error of $u =\sin(4x)\cos(5y)$ with $\Omega = (0,1)^2$ and convergence rate for $k=7$, $m=1$, and $d=2$.}
  \renewcommand{\arraystretch}{1.125}
%  \resizebox{9.5cm}{!}{
  \begin{tabular}{@{}c c c c c c c c@{}}
    \toprule
    $h$ &\#DoF
    & $\|u - u_I\|$ & Rate 
    & $\|\nabla u - \nabla u_I\|$ & Rate 
    & $\|\nabla^2 u - \nabla^2 u_I\|$ & Rate \\
    \midrule
    $1$   & 55   & 1.13e-01 & --   & 6.97e-01 & --   & 6.92e+00 & --   \\
    $1/2$ & 158  & 6.33e-04 & 7.48 & 7.86e-03 & 6.47 & 1.58e-01 & 5.45 \\
    $1/4$ & 526  & 2.52e-06 & 7.97 & 6.23e-05 & 6.98 & 2.50e-03 & 5.98 \\
    $1/8$ & 1910  & 1.00e-08 & 7.97 & 4.96e-07 & 6.97 & 3.99e-05 & 5.97 \\
    \bottomrule
  \end{tabular}
%  }
  \label{tab:2d2}
\end{table}

\begin{table}[H]
  \centering
  \caption{Interpolation error of $u =\sin(4x)\cos(5y)$  with $\Omega = (0,1)^2$ and convergence rate for $k=9$, $m=2$, and $d=2$.}
  \renewcommand{\arraystretch}{1.15}
%  \resizebox{12.75cm}{!}{
  \begin{tabular}{c c 
                  c c 
                  c c 
                  c c 
                  c c}
    \toprule
    $h$ &\#DoF
    & $\|u - u_I\|$ & Rate
    & $\|\nabla u - \nabla u_I\|$ & Rate
    & $\|\nabla^2 u - \nabla^2 u_I\|$ & Rate
    & $\|\nabla^3 u - \nabla^3 u_I\|$ & Rate \\
    \midrule
    $1$   & 77  & 6.85e-02 & --   & 4.02e-01 & --   & 3.14e+00 & --   & 4.18e+01 & --   \\
    $1/2$ & 191  & 1.03e-04 & 9.38 & 1.20e-03 & 8.39 & 1.87e-02 & 9.39 & 5.01e-01 & 6.38 \\
    $1/4$ & 575  & 1.05e-07 & 9.94 & 2.44e-06 & 8.95 & 7.58e-05 & 7.95 & 4.06e-03 & 6.95 \\
    $1/8$ & 1967  & 1.05e-10 & 9.96 & 4.90e-09 & 8.96 & 3.04e-07 & 7.96 & 3.26e-05 & 6.96 \\
    \bottomrule
  \end{tabular}
%  }
  \label{tab:2d3}
\end{table}

\begin{table}[H]
  \centering
  \caption{Interpolation error of $u =\sin(2\pi x)\sin(2\pi y)\sin(2\pi z)$ with $\Omega = (0,1)^3$ and convergence rate for $k=11$, $m=1$, and $d=3$.}
  \renewcommand{\arraystretch}{1.125}
%  \resizebox{9.85cm}{!}{
  \begin{tabular}{@{}c c c c c c c c@{}}
    \toprule
    $h$ &\#DoF
    & $\|u - u_I\|$ & Rate 
    & $\|\nabla u - \nabla u_I\|$ & Rate 
    & $\|\nabla^2 u - \nabla^2 u_I\|$ & Rate \\
    \midrule
    1    & 1158   & 1.88e-01 & --    & 1.76e+01 & --    & 1.43e+02 & --    \\
    1/2  & 6385   & 4.00e-03 & 5.55  & 4.53e-02 & 8.60  & 8.02e-01 & 7.48  \\
    1/4  & 42279  & 1.06e-06 & 11.88 & 2.46e-05 & 10.88 & 8.56e-04 & 9.87  \\
    1/8  & 307723 & 2.88e-10 & 11.85 & 1.32e-08 & 10.87 & 9.18e-07 & 9.86  \\
    \bottomrule
  \end{tabular}
%  }
  \label{tab:3d2}
\end{table}

\subsection{Conforming Finite Element Methods for Polyharmonic Equations}
The polyharmonic equation of order $m\in \mathbb N$ is given by
%\vspace{-2pt}
$$
\left\{
\begin{aligned}
    (-1)^{m+1}\Delta^{m+1}u & = f \quad\;\; \text{in}\ \Omega,\\
    u & = g_0 \quad\; \text{on}\ \partial\Omega,\\
\frac{\partial^k u}{\partial n^k} & = g_k \ \quad \text{for}\ k = 1, 2,\ldots,m
\quad  \text{on}\ \partial\Omega,
\end{aligned}
\right.
%\vspace{-15pt}
$$ 
where $\Omega$ is a polyhedral domain in $\mathbb{R}^d$, $n$ represents the outward normal vector on the boundary $\partial \Omega$.
The polyharmonic equation generalizes the Poisson equation ($m+1 = 1$) and the biharmonic equation ($m+1 = 2$).  
The variational formulation is: find $u \in H^m(\Omega)$ with trace $\tr u = (g_0, g_1, \ldots, g_m)$ such that
$$
(\nabla^{m+1} u, \nabla^{m+1} v) = (f, v), \quad \forall v \in H_0^{m+1}(\Omega).
$$
The boundary data $(g_0, g_1, \ldots, g_m)$ must satisfy certain compatibility conditions to ensure existence of a solution. Care is needed when imposing boundary conditions for smooth finite element. For details on boundary treatment, we refer to the documentation of FEALPy~\cite{fealpy}.

%\XH{Does $\tr u_h = Q_k(g_0, g_1, \ldots, g_m)$ hold (especially for supersmooth DoFs)?}

%Notice that the boundary condition only gives partial of the normal derivatives leaving some as free DoFs. 

%For a sub-simplex $f$ sitting on the boundary $\partial \Omega$ but shared by several faces of $\partial \Omega$, there exists more normal vectors, the boundary data should be consistent in the sense that $\nabla u$ is uniquely determined on $f$. For example, consider a conner $\texttt{v}$ of a 3D domain but with four faces surrounding it. Then $\partial_{n_i} u(\texttt{v}), i=1,2,3,4$ should determine $\nabla u(\texttt{v})$ uniquely. For sub-simplex $f$ sitting on the interior of a face $F$ of $\partial \Omega$, the tangential derivatives $\nabla_F^ju,  j=0,\ldots, m, $ is computable using $u\mid_F$ which can be used to determine some DoFs.  

While strongly imposing boundary conditions can be difficult, the Nitsche technique \cite{Nitsche1971} offers a method for weak imposition. However, this approach often results in a more complex discrete bilinear form.

%\subsection*{Example 2}Biharmonic equation
%\phantomsection
%\label{sec:example2}
%$$\left\{
%\begin{aligned}
%    \Delta^2u &=f\quad \text{in}\ \Omega\\
%    u =     \frac{\partial u}{\partial n} &=0\quad \text{on}\ \partial\Omega
%%    \\
%%    \frac{\partial u}{\partial n} &=0\quad \text{on}\ \partial\Omega
%\end{aligned}
%\right.
%$$

\begin{table}[H]
  \centering
  \caption{Finite element error and convergence rate for $k=5$, $m=1$, and $d=2$ for the biharmonic equation with $\Omega=(0, 1)^2,$ $u=(\sin(2\pi x)\sin(2\pi y))^2$, and zero Dirichlet boundary condition.}
  \renewcommand{\arraystretch}{1.125}
%  \resizebox{9.75cm}{!}{
  \begin{tabular}{@{}c c c c c c c c@{}}
    \toprule
    $h$ &\#DoF 
    & $\|u - u_h\|$ & Rate 
    & $\|\nabla u - \nabla u_h\|$ & Rate 
    & $\|\nabla^2 u - \nabla^2 u_h\|$ & Rate \\
    \midrule
    $1/4$  & 206   & 1.61e-02 & --   & 3.37e-01 & --   & 8.31e+00 & --   \\
    $1/8$  & 694   & 3.00e-04 & 5.74 & 1.42e-02 & 4.57 & 7.54e-01 & 3.46 \\
    $1/16$ & 2534  & 3.08e-06 & 6.61 & 3.23e-04 & 5.45 & 4.28e-02 & 4.14 \\
    $1/32$ & 9670  & 3.31e-08 & 6.54 & 7.74e-06 & 5.39 & 2.34e-03 & 4.19 \\
    $1/64$ & 37766 & 4.42e-10 & 6.23 & 2.15e-07 & 5.17 & 1.39e-04 & 4.08 \\
    \bottomrule
  \end{tabular}
%  }
  \label{tab:error_rates_k5_m1}
\end{table}

%\subsection*{Example 3}Triple Harmonic Equation
%\phantomsection
%\label{sec:example3}
%      $$\left\{
%\begin{aligned}
%    -\Delta^3u &=f\quad\text{in}\ \Omega\\
%    \frac{\partial^k u}{\partial n^k} &=0\quad k=0, 1,2\quad\text{on}\ \partial\Omega
%\end{aligned}
%\right.$$ 
%where $\Omega=(0, 1)^2, u=(\sin(2\pi x)\sin(2\pi y))$.

%\begin{table}[H]
%  \centering
%  \caption{Error and convergence rate for $k=9$, $m=2$.}
%  \renewcommand{\arraystretch}{1.15}
%  \resizebox{11.125cm}{!}{
%  \begin{tabular}{@{}c c c c c@{}}
%    \toprule
%    $h$ 
%    & $\|u - u_h\|_{\Omega,0}$ & Rate 
%    & $\|\nabla u - \nabla u_h\|_{\Omega,0}$ & Rate \\
%    \midrule
%    1     & 1.0700e+00 & --   & 3.0516e+00 & --   \\
%    0.5   & 4.8451e-04 & 11.11 & 7.0319e-03 & 8.76 \\
%    0.25  & 4.5127e-07 & 10.07 & 1.2132e-05 & 9.18 \\
%    0.125 & 3.6976e-10 & 10.25 & 2.1738e-08 & 9.12 \\
%  \end{tabular}}
%  \centering
%  \resizebox{12cm}{!}{
%  \begin{tabular}{@{}c c c c c@{}}
%    \toprule
%    $h$ 
%    & $\|\nabla^2 u - \nabla^2 u_h\|_{\Omega,0}$ & Rate 
%    & $\|\nabla^3 u - \nabla^3 u_h\|_{\Omega,0}$ & Rate \\
%    \midrule
%    1     & 3.1359e+00 & --   & 4.1805e+01 & --   \\
%    0.5   & 1.8693e-02 & 9.39 & 5.0072e-01 & 6.38 \\
%    0.25  & 7.5756e-05 & 7.95 & 4.0611e-03 & 6.95 \\
%    0.125 & 3.0435e-07 & 7.96 & 3.2635e-05 & 6.96 \\
%    \bottomrule
%  \end{tabular}}
%  \label{tab:example3}
%\end{table}
\begin{table}[H]
  \centering
  \caption{Finite element error and convergence rate for $k=9$, $m=2$, and $d=2$ for the triple harmonic equation with $\Omega=(0,1)^2$, $u=\sin(2\pi x)\sin(2\pi y)$, and non-homogeneous Dirichlet boundary condition.}
  \renewcommand{\arraystretch}{1.15}
%  \resizebox{13.012cm}{!}{
  \begin{tabular}{c c 
                  c c 
                  c c 
                  c c 
                  c c}
    \toprule
    $h$ &\#DoF
    & $\|u - u_h\|$ & Rate 
    & $\|\nabla u - \nabla u_h\|$ & Rate 
    & $\|\nabla^2 u - \nabla^2 u_h\|$ & Rate 
    & $\|\nabla^3 u - \nabla^3 u_h\|$ & Rate \\
    \midrule
    $1$   & 77     & 1.07e+00 & --    & 3.05e+00 & --   & 4.59e+02 & --   & 1.71e+02 & --   \\
    $1/2$ & 191    & 4.85e-04 & 11.11 & 7.03e-03 & 8.76 & 1.55e+00 & 8.21 & 2.73e+00 & 5.97 \\
    $1/4$ & 575    & 4.51e-07 & 10.07 & 1.21e-05 & 9.18 & 5.53e-04 & 8.13 & 2.12e-02 & 7.01 \\
    $1/8$ & 1967    & 3.74e-10 & 10.24 & 2.17e-08 & 9.12 & 1.74e-06 & 8.31 & 1.39e-04 & 7.16 \\
    \bottomrule
  \end{tabular}
%  }
  \label{tab:example3}
\end{table}

We give a numerical example in two dimensions for the biharmonic equation ($k = 5$) and the triple harmonic equation ($k = 9$). 
The numerical results are shown in Table~\ref{tab:error_rates_k5_m1} and Table~\ref{tab:example3}, which demonstrate optimal convergence rates.

We also test the biharmonic equation in three dimensions in Table~\ref{tab:error_rates_k9_m1}. The lowest degree of
polynomial for $m=1$ is $k =
9$~\cite{vzenivsek1974tetrahedral,Lai;Schumaker:2007Trivariate,zhang_family_2009}.
For $m = 2$, the matrix is very ill-conditioned even for a coarse mesh, e.g.
$h=1/2$. In the future, we will study the fast solvers for smooth elements. 

\begin{table}[H]
  \centering
  \caption{Finite element error and convergence rate for $k=9$, $m=1$, and $d=3$ for the biharmonic equation with $\Omega=(0,1)^3$, $u=\sin(5x)\sin(5y)\sin(5z)$, and non-homogeneous Dirichlet boundary condition.}
  \renewcommand{\arraystretch}{1.125}
%  \resizebox{9.85cm}{!}{
  \begin{tabular}{@{}c c c c c c c c@{}}
    \toprule
    $h$ &\#DoF
    & $\|u - u_h\|$ & Rate 
    & $\|\nabla u - \nabla u_h\|$ & Rate 
    & $\|\nabla^2 u - \nabla^2 u_h\|$ & Rate \\
    \midrule
    $1$   & 582    & 7.35e-01 & --    & 3.51e+00 & --    & 3.01e+01 & --   \\
    $1/2$ & 2761   & 2.80e-04 & 11.36 & 5.61e-03 & 9.29  & 1.07e-01 & 8.13 \\
    $1/4$ & 16791   & 5.87e-07 & 8.90  & 2.27e-05 & 7.95  & 8.68e-04 & 6.95 \\
    $1/8$ & 116971  & 5.61e-10 & 10.03 & 4.50e-08 & 8.98  & 3.35e-06 & 8.02 \\
    \bottomrule
  \end{tabular}
%  }
  \label{tab:error_rates_k9_m1}
\end{table}

\appendix
\section{An Example of $C^1$ Finite Element in Two Dimensions}\label{appdx:C12d}
In this section, we illustrate the construction of basis functions for the lowest-order $C^1$ finite element on triangulation $\mathcal T_h$ using the procedure proposed in this paper. The smoothness vector is $\bs r = (2, 1, 0)$ and the polynomial degree is $k = 5$. The resulting space is $\mathcal{S}_{5, \bs r}^2(\mathcal{T}_h)$.

\step{1} {\bf Lattice decompositions and the local-to-global DoFs mapping}. We first construct the lattice decomposition $S_\ell(f(\texttt{T}))$ and the reference lattice decomposition $\hat{S}_\ell([f(\texttt{T})])$.  
See Fig.~\ref{fig:lattice2D} (left) for the decomposition within a triangle.  
For example, the local lattice point of $S_1([0, 1])$ is $\{ (2, 2, 1) \}$.  
We also define both local and global labeling of the lattice points, which provides the local-to-global DoFs mapping.  Details of the labeling on simplicies of different dimensions can be found in~\cite{ChenChenHuangWei2023}.

\begin{figure}[htpb]
    \centering
    \begin{minipage}{0.4\textwidth}
        \centering
        \includegraphics[width=0.65\textwidth]{./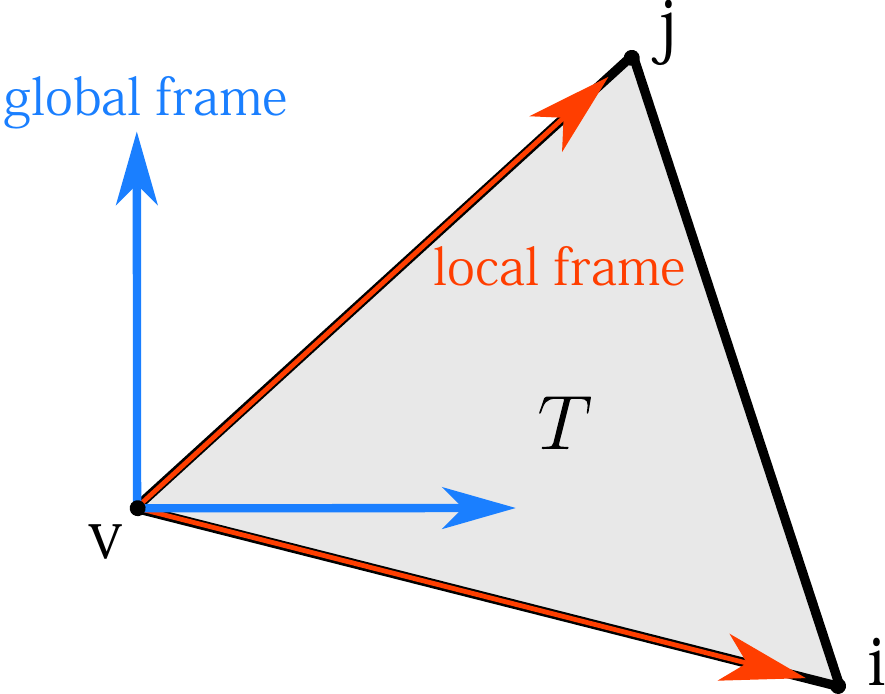}
    \end{minipage}
    \begin{minipage}{0.4\textwidth}
        \centering
        \includegraphics[width=0.72\textwidth]{./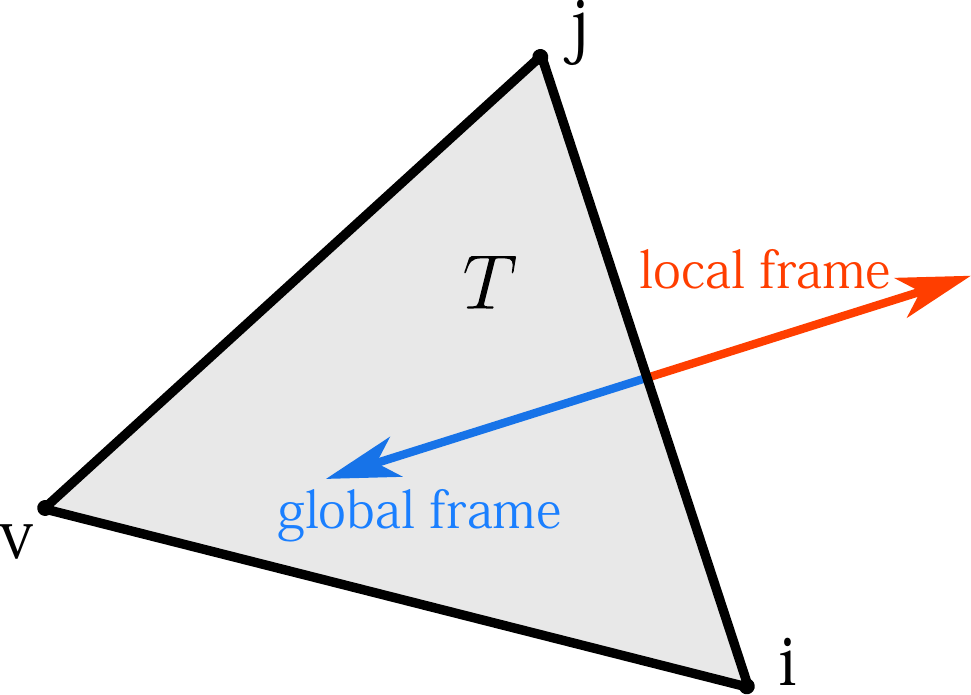}
    \end{minipage}
    \caption{The frame on vertex $\ttt{v}$(left) and edge $\{i, j\}$ (right)
    of a triangle $T$.}
    \label{fig:frame}
\end{figure}

\step{2} {\bf The local and global frames for sub-simplices}. Consider a triangular mesh $\mathcal{T}_h$, and let $T \in \mathcal{T}_h$ be a triangle with vertices $\{\bs x_0, \bs x_1, \bs x_2\}$.  
For a vertex $\texttt{v}$ of $T$, let $e_{\texttt{v}} := \{\texttt{v}\}^* = [i, j]$ denote the edge opposite to vertex $\texttt{v}$.  
We define the local normal frame $n_{\texttt{v}}$ and the global normal frame $N_{\texttt{v}}$ at vertex $\texttt{v}$ as
$$
n_{\texttt{v}} = \left\{ \bs t_{\texttt{v}, i}, \bs t_{\texttt{v}, j} \right\}, \quad
N_{\texttt{v}} = \left\{ \bs e_0, \bs e_1 \right\},
$$
where $\bs t_{\texttt{v}, i} = \bs x_i - \bs x_{\texttt{v}}$, and $\left\{ \bs e_0, \bs e_1 \right\}$ are the standard basis vectors of $\mathbb{R}^2$.

The local normal vector $n_e$ on edge $e$, which is opposite to vertex $\bs{x}_i$, is defined as
$n_e = \nabla \lambda_i/|\nabla \lambda_i|^2$. The global normal frame $N_e$ is a fixed unit normal direction of $e$, independent of the triangle containing $e$, and also determines the orientation of $e$.  
Therefore, $n_e = \sign(T, e) \, N_e/|\nabla \lambda_i|$, where $\sign(T, e) =  (N_e\cdot\nabla \lambda_i)/|\nabla \lambda_i|$ denotes the sign of the orientation of $T$ relative to $e$.  
See Figure~\ref{fig:frame} for an illustration.

\smallskip
\step{3} {\bf Local DoFs and dual basis functions}. 
The shape function space is $\mathbb{P}_5(T)$. The six local vertex degrees of freedom (DoFs) can be written as:
$$
\left\{
\begin{aligned}
u(\texttt{v}),\  
\frac{\partial u}{\partial \bs t_{\texttt{v}, i}}(\texttt{v}),\  
\frac{\partial u}{\partial \bs t_{\texttt{v}, j}}(\texttt{v}),\\ 
\frac{\partial^2 u}{\partial \bs t_{\texttt{v}, i}^2}(\texttt{v}),\  
\frac{\partial^2 u}{\partial \bs t_{\texttt{v}, i} \partial \bs t_{\texttt{v}, j}}(\texttt{v}),\  
\frac{\partial^2 u}{\partial \bs t_{\texttt{v}, j}^2}(\texttt{v})
\end{aligned}, 
\quad \texttt{v} \in \Delta_0(T),\ \{i, j\} = \texttt{v}^{*} \right.
$$
The edge DoFs are defined as:
$$
\left\langle \ttt{b}^{(2, 2)}, \frac{\partial u}{\partial n_e} \big|_e
\right\rangle, \quad e \in \Delta_1(T), \, \alpha_e = (2,2)\in \mathbb T_5^1(e).
$$

According to Section~\ref{sec:modifiedDoF}, the DoF-Basis matrix is a lower
triangular matrix. The coefficients of the basis functions using the Bernstein
polynomial basis can be obtained by inverting the transpose of this matrix, which is upper triangular. 

For this example, as $S_2(T)$ is empty, the basis
functions on the edges are computed first, followed by those at the vertices. As $|S_1(f) = 1|$, the dual basis is just a scaling of $B^{\alpha}$. For the
vertex-associated basis functions, the higher-order derivative degrees of
freedom are computed first, followed by the lower-order ones, which are sorted by the distance to the vertex. The basis functions computed later may depend on those computed earlier. 

For computational convenience, we define a matrix $\bs \Lambda_{3\times 3}$ by
$$
\Lambda_{ij} = \nabla \lambda_i \cdot n_{e_j}=\frac{\nabla \lambda_i\cdot \nabla \lambda_j}{|\nabla \lambda_j|^2}, \quad i, j = 0, 1, 2.
$$
% \LC{with the rescaled $n_e$, $\Lambda_{ii} = 1$ and $\tilde \phi_e$ is even simpler.}
%Let $\sign(T, e)$ denote the orientation of triangle $T$ relative to edge $e$,
%such that $n_e = \sign(T, e) N_e$. 
The basis function corresponding to an
edge DoF is given by:
$$
\tilde{\phi}^{e} = 6\lambda_i^2 \lambda_j^2 \lambda_{\ttt{v}}, \quad \text{for } e = \{i, j\} 
\in \Delta_1(T),\ \texttt{v} = e^{*}.
$$
The basis functions corresponding to the vertex DoFs are:
$$
\begin{aligned}
\tilde{\phi}_3^\ttt{v} &= \frac{1}{2} \lambda_\ttt{v}^3 \lambda_i^2 - 
\frac{1}{4} \Lambda_{\ttt{v}j} \tilde{\phi}^{e_j}, \\
\tilde{\phi}_4^\ttt{v} &= \lambda_\ttt{v}^3 \lambda_i \lambda_j, \\ 
\tilde{\phi}_5^\ttt{v} &= \frac{1}{2} \lambda_\ttt{v}^3 \lambda_j^2 - 
\frac{1}{4} \Lambda_{\ttt{v}i} \tilde{\phi}^{e_i}, \\
\tilde{\phi}_1^\ttt{v} &= \lambda_\ttt{v}^4 \lambda_i + 8 \tilde{\phi}_3^\ttt{v}
+ 4 \tilde{\phi}_4^\ttt{v}, \\
\tilde{\phi}_2^\ttt{v} &= \lambda_\ttt{v}^4 \lambda_j + 8 \tilde{\phi}_5^\ttt{v}
+ 4 \tilde{\phi}_4^\ttt{v}, \\
\tilde{\phi}_0^\ttt{v} &= \lambda_\ttt{v}^5 - 20 \tilde{\phi}_3^\ttt{v} - 20
\tilde{\phi}_4^\ttt{v} - 20 \tilde{\phi}_5^\ttt{v} + 5 \tilde{\phi}_1^\ttt{v} +
5 \tilde{\phi}_2^\ttt{v}.
\end{aligned}
$$
%Note that the basis functions are computed in a specific order: first, the basis
%functions on the edges are computed, followed by those at the vertices. For the
%vertex-associated basis functions, the higher-order derivative degrees of
%freedom are computed first, followed by the lower-order ones. The basis
%functions computed later may depend on those computed earlier; for example,
%$\tilde{\phi}^\ttt{v}_2$ depends on $\tilde{\phi}^\ttt{v}_4$.

\step{4} {\bf Global DoFs and dual basis functions}. 
We now consider the construction of global basis functions. According to
Section~\ref{sec:local2global}, global basis functions are constructed from
local basis functions via transformation matrices.

The global basis function associated with edge $e$ is given by:
$$
\phi^e = \sign(T, e) \frac{1}{|\nabla \lambda_i|}\tilde{\phi}^e.
$$
For a symmetric matrix $\bs S_{2\times 2}$,
we define
$
\mathrm{vec}(\bs S) = (S_{00}, 2S_{01}, S_{11})^{\intercal}.
$
We define the transformation matrices:
$$
\begin{aligned}
\bs C^{\texttt{v},2} &= \begin{pmatrix}
\mathrm{vec}(\bs t_{\texttt{v},i}\otimes \bs t_{\texttt{v}, i}) &
\mathrm{vec}(\mathrm{sym}(\bs t_{\texttt{v},i}\otimes \bs t_{\texttt{v}, j})) &
\mathrm{vec}(\bs t_{\texttt{v}, j}\otimes \bs t_{\texttt{v}, j})
\end{pmatrix}_{3\times 3},\\
\bs C^{\texttt{v},1} &=
\begin{pmatrix}
\bs t_{\texttt{v},i} & \bs t_{\texttt{v},j}
\end{pmatrix}_{2\times 2}.
\end{aligned}
$$
The global basis functions associated with vertex $\texttt{v}$ are defined as:
$$
\begin{aligned}
(\phi^\ttt{v}_3, \phi^\ttt{v}_4, \phi^\ttt{v}_5)^\intercal &=
\bs C^{\ttt{v},2}(\tilde{\phi}^\ttt{v}_3, \tilde{\phi}^\ttt{v}_4, \tilde{\phi}^\ttt{v}_5)^{\intercal},\\
(\phi^\ttt{v}_1, \phi^\ttt{v}_2)^{\intercal} &=
\bs C^{\ttt{v},1}(\tilde{\phi}^\ttt{v}_1, \tilde{\phi}^\ttt{v}_2)^{\intercal},
\\
\phi^\ttt{v}_0 &= \tilde{\phi}^\ttt{v}_0.
\end{aligned}
$$
Together, these form a basis for the lowest-order $C^1$ finite element space $\mathcal S_{5, \bs r}^2(\mathcal T_h)$ on a triangulation $\mathcal T_h$.

It is worth noting that the Argyris element \cite{argyris_tuba_1968} shares the same shape function space and smoothness vector, but its edge DoFs are defined differently, as:
$$
\texttt{m}_e(u):= \frac{\partial u}{\partial n_e} (\bs m_e), \quad e \in \Delta_1(T),
$$
where $\bs m_e$ denotes the midpoint of edge $e$. Basis functions dual to the corresponding DoFs
can be constructed similarly. The edge basis functions differ from
$\phi^e$ only by a scalar coefficient. However, the computation of the vertex basis functions becomes more complicated as $\texttt{m}_e(B^{\alpha})$ has no simple formulae. The modification using $\texttt{b}^{\alpha}$ as DoFs simplifies the construction and generalizes to all cases. 
We refer to \cite{Okabe1993,DominguezSayas2009} for the basis functions for the Argyris element.

\section{Notation Table}

In this section, we list the notation used throughout the paper in the following table for easy reference.
{\renewcommand{\arraystretch}{1.35} % Increase row spacing
\small
\begin{longtable}{>{\centering\arraybackslash}m{0.1254\textwidth} m{0.85\textwidth}}
\caption{Notation list.} \\
    \toprule

\textbf{Notation} & \textbf{Description} \\
\endfirsthead
\hline

\textbf{Notation} & \textbf{Description} \\
\endhead
\hline

$T$ & Geometric $d$-simplex with vertices $\{\texttt{v}_0, \ldots, \texttt{v}_d\}$ \\
$\hat{T}$ & Standard reference simplex with vertices $\{0, \bs e_1, \ldots, \bs e_d\}$ \\
$\TT$ & Abstract $d$-simplex, i.e., a finite set of cardinality $d+1$ \\
$\texttt{S}_d$ & Standard (combinatorial) $d$-simplex $\{0, 1, \ldots, d\}$ \\
$\mathcal{T}_h$ & A geometric triangulation of a domain $\Omega$ \\
$\mathcal{S}$ & A simplicial complex \\
$\Delta_{\ell}(T)$ & Set of all $\ell$-dimensional sub-simplices ($\ell$-simplicies) of $T$ \\
$\Delta_{\ell}(\mathcal{S})$ & Set of all $\ell$-simplicies in simplicial complex $\mathcal{S}$ \\
$\Delta_{\ell}(\mathcal{T}_h)$ & Set of all $\ell$-simplicies in triangulation $\mathcal{T}_h$ \\
\\
$\boldsymbol{r}$ & Smoothness vector: a sequence of integers $(r_0, \ldots, r_d)$ satisfying $r_d=0$, $r_{d-1}=m$, $r_{\ell} \ge 2r_{\ell+1}$ \\
$\mathbb{T}_k^d$ & Simplicial lattice of degree $k$ in dimension $d$: $\left\{ \alpha \in \mathbb{N}^{0:d} : |\alpha| = k \right\}$ \\
$\mathbb{T}_k^d(D)$ & Set of lattice points whose geometric images lie within $D \subseteq T$ \\
$\alpha_f$ & Restriction of multi-index $\alpha$ to sub-simplex $f$, $R_f(\alpha)$ \\
$\alpha!$ & Factorial of a multi-index, $\alpha_0! \alpha_1! \cdots \alpha_d!$ \\
$ \dist(\alpha, f)$ & distance of lattice point $\alpha$ to sub-simplex $f$: $|\alpha_{f*}| = \sum_{i\in f^*}\alpha_i$\\
$L(f,s)$ &  $\left\{ \alpha \in \mathbb{T}_k^d : \dist(\alpha, f) = s \right\}$, $L(f, s) = L(f^*, k - s)$ \\
$D(f, r)$ & $\left\{ \alpha \in \mathbb{T}_k^d : \dist(\alpha, f) \le r \right\}$, $D(f, r) = \bigcup_{s=0}^r L(f, s)$\\
$S_{\ell}(f)$ & $D(f, r_\ell) \setminus \left( \bigcup_{i=0}^{\ell-1} \bigcup_{e \in \Delta_i(f)} D(e, r_i) \right)$\\
$\hat{S}_{\ell}([f])$ & Reference lattice set $\{(\alpha_f, \gamma) \mid \alpha_f \in R_f(S_{\ell}(f)), \gamma \in \mathbb{T}_{k - |\alpha_{f}|}^{d-\ell - 1} \}$ \\
$\mathbb{T}_k^d$ & Lattice decomposition $\Oplus_{\ell=0}^d \Oplus_{f \in \Delta_\ell(\TT)} S_\ell(f)$ \\
%$\mathbb{T}_k^d(\mathcal{T}_h)$ & Union of lattice points over a conforming triangulation $\mathcal{T}_h$ \\
$\mathbb S_{k,\bs r}^d$ & Lattice decomposition on triangulation $\mathcal T_h$ with smoothness vector $\bs r$ $\Oplus_{\ell=0}^d\Oplus_{f\in \Delta_{\ell}(\mathcal T_h)}\hat{S}_{\ell}([f])$\\
\\
$\lambda$ & Barycentric coordinates $(\lambda_0, \lambda_1, \ldots, \lambda_d)$ \\
$\lambda^\alpha$ & Monomial in barycentric coordinates, $\lambda_0^{\alpha_0} \cdots \lambda_d^{\alpha_d}$ for $\alpha\in \mathbb N^{0:d}$ \\
$\lambda_f $ & Barycentric coordinates $(\lambda_{f(0)}, \ldots, \lambda_{f(\ell)})$ associated with the vertices of $f\in \Delta_{\ell}(T)$ \\
$\lambda_f^{\alpha}$ & $\lambda_{f(0)}^{\alpha_0} \cdots \lambda_{f(\ell)}^{\alpha_{\ell}}$ for $f\in \Delta_{\ell}(T)$ and $\alpha\in \mathbb N^{0:\ell}$ \\
%$\mathbb{T}_k^{\ell}(\mathring{f})$ & Set of lattice points whose geometric embeddings lie in the interior of the realization of $f$ \\
$\mathbb{P}_k(T)$ & Space of polynomials of degree at most $k$ on simplex $T$ \\
%$B^{\alpha} = \frac{k!}{\alpha!}\lambda^{\alpha}$ & Bernstein polynomial associated with a lattice point $\alpha$ \\
$\mathcal{B}_k$ & Bernstein basis for $\mathbb{P}_k(T)$: $\left\{ B^{\alpha}= \frac{k!}{\alpha!}\lambda^{\alpha}: \alpha \in \mathbb{T}_k^d \right\}$ \\
$\mathcal{B}_k(f)$ & Bernstein basis over sub-simplex $f$, $\{ B^{\alpha}_f := \frac{k!}{\alpha!} \lambda_f^{\alpha} : \alpha \in \mathbb{T}_k^\ell\}$ \\
$\{\texttt{b}^\alpha\}$ & Dual basis of $\mathbb{P}_k(T)'$ to the Bernstein basis $\{B^\beta\}$: $\langle \texttt{b}^{\alpha}, B^{\beta} \rangle = \delta_{\alpha, \beta}$ \\
\\
$\mathbb{R}^{d,r}$ & $(\mathbb{R}^d)^{\otimes r}$: $r$-th order tensor space over $\mathbb{R}^d$  \\
$\mathbb{S}^{d,r}$ & $r$-th order symmetric tensor space over $\mathbb{R}^d$: $\tau_{i_{\sigma(1)} \cdots i_{\sigma(r)}} = \tau_{i_1 \cdots i_r}$  for any $\sigma \in \mathcal{G}^r$ \\
$\sym(\boldsymbol{\tau})$ & Symmetrization of a tensor $\boldsymbol{\tau}$: ${\rm sym}(\boldsymbol{\tau})_{i_1 \cdots i_r} = \frac{1}{r!} \sum_{\sigma \in \mathcal{G}^r} \tau_{i_{\sigma(1)} \cdots i_{\sigma(r)}}, \quad 1 \leq i_1, \ldots, i_r \leq d.
$ \\
$\bs t_i^{\otimes\alpha_i}$ & Tensor product $\bs t_i \otimes \cdots \otimes \bs t_i$ ($\alpha_i$ times) \\
$t^\alpha$ or $t^{\otimes\alpha}$ & Tensor monomial $\bs t_1^{\otimes\alpha_1} \otimes \cdots \otimes \bs t_\ell^{\otimes\alpha_\ell}$ \\
%$\mathfrak{S}^{d,r}$ & $r$-th order symmetric tensor space over $\mathbb{R}^d$ \\
$\mathcal{I}_{\ell}^r$ & Increasing multi-index set $\{(i_1, \ldots, i_r) \in \{1, \ldots, \ell\}^r : i_1 \le \cdots \le i_r\}$ \\
$I^\alpha$ & Increasing multi-index in $\mathcal{I}_{\ell}^r$ corresponding to $\alpha \in \mathbb{T}_r^{\ell-1}$ \\
$\mathcal{G}^r$ & Permutation group of $(1, \ldots, r)$ \\
$\mathcal{G}_\alpha^r$ & Set of equivalence classes $\mathcal{G}^r / \sim^\alpha$: $\sigma \sim^\alpha \sigma' \iff \sigma(I^\alpha) = \sigma'(I^\alpha)$ \\
$\frac{\partial^rv}{\partial n^\alpha} $ & Directional derivative of order $r$ of a function $v$ with respect to directions $n = \{\boldsymbol{n}_1, \dots, \boldsymbol{n}_\ell\}$ and multi-index $\alpha \in \mathbb{T}_r^{\ell-1}$: $\nabla^r v : n^{\otimes \alpha}$  \\
$\partial^\alpha v$ & Partial derivative $\frac{\partial^r v}{\partial x_1^{\alpha_1} \cdots \partial x_d^{\alpha_d}}$ \\
\\
$\mathscr{T}^f$ & Tangent plane of sub-simplex $f$ \\
$\mathscr{N}^f$ & Normal plane of sub-simplex $f$ \\
%$\{N_f^1, \ldots, N_f^{d-\ell}\}$ & Fixed normal bases for the normal planes of sub-simplex $f$ \\
$\nabla \lambda$ & Vector of gradients of barycentric coordinates $(\nabla\lambda_0, \ldots, \nabla\lambda_d)$ \\
$\nabla_f \lambda_i$ & Surface or tangential gradient of $\lambda_i$ on $f$ \\
$n_f$ & Local normal basis of $\mathscr N^f$, $(\bs{n}_f^1, \ldots, \bs{n}_f^{d-\ell}) = \left\{ \frac{\nabla_{f \cup \{i\}} \lambda_i}{|\nabla_{f \cup \{i\}} \lambda_i|^2} \mid i \in f^* \right\}$ \\
$\hat n_f$ & Dual normal basis of $n_f$: $(\hat{\bs{n}}_f^1, \ldots, \hat{\bs{n}}_f^{d-\ell}) = \left\{ \nabla \lambda_i \mid i \in f^* \right\}$\\
$N_f$ & Global basis $\{\bs{N}_f^1, \ldots, \bs{N}_f^{d-\ell}\}$ for the normal plane $\mathscr{N}^f$ \\
$\hat{N}_f$ & Dual normal basis of $N_f$: $\{\hat{\boldsymbol{N}}_f^1, \ldots, \hat{\boldsymbol{N}}_f^{d-\ell}\}$ \\
\\
$\texttt{L}_f^\alpha(u)$ & Modified local degrees of freedom $\langle \texttt{b}^{\alpha_f}, \frac{\partial^{|\alpha_{f^*}|}}{\partial n_f^{\alpha_{f^*}}} u \mid _f \rangle$\\
$\texttt{G}_\alpha(u)$ & Global degrees of freedom $\langle \texttt{b}^{\alpha_f}, \frac{\partial^{|\gamma|}}{\partial N_f^{\gamma}}u \mid_f \rangle$\\
$D_{\alpha, \beta}$ & Local DoF-Basis matrix $\texttt{L}_f^{\alpha}(B^{\beta}), \, \alpha, \beta \in \mathbb{T}_k^d(T)$ \\
& $D_{\alpha, \beta} = \frac{k!}{(k - s)!} \delta_{\alpha, \beta}, \quad \alpha, \beta \in S_{\ell}(f) \cap L(f, s)$\\
$T_{\alpha,\beta}$ & Transformation matrix from local to global DoFs \\
\\
$\Pi^f_k$ & Lagrange interpolation operator of degree $k$ on $f$: $\Pi^f_ku(x_{\alpha}) = u(x_{\alpha})$ for all interpolation points $x_{\alpha}$ at lattice point $\alpha$\\
$I_T^k$ & Local interpolation operator $C^m(T) \rightarrow \mathbb{P}_k(T)$: $\texttt{L}_f^\alpha(I_T^k u) = \texttt{L}_f^\alpha(u) $ \\
$I_h^k$ & Global interpolation operator $C^m(\Omega) \rightarrow S_{k,\boldsymbol{r}}^d(\mathcal{T}_h)$: $\texttt{G}_\alpha(I_h^k u) = \texttt{G}_\alpha(u)$ \\
\bottomrule

\end{longtable}
}

\end{document}